\documentclass[11pt,reqno]{amsart}

\usepackage{amsmath}
\usepackage{amsmath}
\usepackage[margin=2.5cm]{geometry}
\usepackage{color}               
\usepackage{faktor}
\usepackage{tikz}
\usepackage{tikz-cd}
\usepackage{spverbatim}
\usepackage{float}
\usepackage[pdftex,hyperfootnotes=false,colorlinks=false,hypertexnames=false]{hyperref}

\usepackage{latexsym,enumitem}
\usepackage{amssymb}
\usepackage{epsfig}
\usepackage{psfrag}
\usepackage{amsthm}
\usepackage{amscd}
\usepackage{amsfonts}
\usepackage{graphics,caption}
\usepackage[all]{xy}
\usepackage{etoolbox}
\patchcmd{\quote}{\rightmargin}{\leftmargin 2em \rightmargin}{}{}
\captionsetup{width=6in}
\usepackage{mathrsfs}
\usepackage{lipsum}

 \usepackage{mathtools}
\usepackage[normalem]{ulem}
\usepackage{array}
\usepackage{tikz-cd}
\usepackage{quiver}
\usepackage[dvipsnames]{xcolor}
\usepackage{colortbl}
\setlength {\marginparwidth }{2cm} 
\usepackage{todonotes}
\usepackage{float}

\usepackage[oldstyle]{libertine}%
\usepackage[T1]{fontenc}
\usepackage[final]{microtype}
\usepackage[utf8]{inputenc}
\usepackage{csquotes}
\makeatletter
\renewcommand*\libertine@figurestyle{LF}
\makeatother
\usepackage[libertine,libaltvw,liby]{newtxmath}
\makeatletter
\renewcommand*\libertine@figurestyle{OsF}
\makeatother
\usepackage[noerroretextools,backend=biber,style=alphabetic,maxalphanames=5,giveninits,sorting=nyt,%
maxbibnames=99,doi=false,isbn=false,url=false,eprint=false]{biblatex}
\addbibresource{Bibliography.bib}

\usepackage{tikz-cd}
\usepackage[noabbrev,capitalise]{cleveref}
\usepackage{autonum}

\usepackage{todonotes}

\newcommand{\R}{\mathbb{R}} 
 
\newcommand{\N}{\mathbb{N}} 
\newcommand{\Z}{\mathbb{Z}} 
\newcommand{\Q}{\mathbb{Q}}

\newcommand{\rsph}{\mathbb{P}^1}


\newcommand{\ba}{\mathcal{A}}
\newcommand{\fb}{\mathcal{F}}

\newcommand{\gb}{\mathcal{G}}
\newcommand{\sba}{\mathcal{S}}
\newcommand{\vb}{\mathcal{V}}



\newcommand{\sub}{\subseteq}



\newcommand{\bt}{\mathbf{t}}
\newcommand{\bs}{\mathbf{s}}


\newcommand{\im}{\operatorname{Im}}
\newcommand{\aut}{\operatorname{Aut}}
\newcommand{\val}{\operatorname{val}}

\setcounter{tocdepth}{2}

\newcommand{\defeq}{\vcentcolon=}

\newcommand{\iwedge}{\bigwedge^{\frac{\infty}{2}}}

\theoremstyle{plain}
    \newtheorem{theorem}{Theorem}[section]
    \newtheorem{construction/theorem}[theorem]{Construction/Theorem}
    \newtheorem{corollary}[theorem]{Corollary}
    \newtheorem{lemma}[theorem]{Lemma}
    \newtheorem{proposition}[theorem]{Proposition}
\theoremstyle{definition}
    \newtheorem{remark}[theorem]{Remark}
    \newtheorem{example}[theorem]{Example}
    \newtheorem{definition}[theorem]{Definition}

\title{Tropicalising hypergeometric $\tau$-functions}
\author[M.~A.~Hahn]{Marvin Anas Hahn}
\address{M.~A.~Hahn: School of Mathematics 17, Westland Row, Trinity College Dublin, Dublin 2, Ireland}
\email{hahnma@tcd.ie}

\author[B.~O'Callaghan]{Brian O'Callaghan}
\address{B.~O'Callaghan: School of Mathematics 17, Westland Row, Trinity College Dublin, Dublin 2, Ireland}
\email{brocalla@tcd.ie}

\author[J.~Wahl]{Jonas Wahl}
\address{J.~Wahl: German Research Centre for Artificial Intelligence (DFKI), Berlin, Germany}
\email{jonas.wahl@dfki.de}

\thanks{\emph{2020 Mathematics Subject Classification:} 05A15, 14T90, 14H81, 14H30}
\keywords{Tropical geometry, Hurwitz numbers, 2D Toda lattice}

\begin{document}
\maketitle

\begin{abstract}
    Weighted Hurwitz numbers arise as coefficients in the power sum expansion of deformed hypergeometric $\tau$--functions. They specialise to essentially all known cases of Hurwitz numbers, including classical, monotone, strictly monotone and completed cycles Hurwitz numbers. In this work, we develop a tropical geometry framework for their study, thus enabling a simultaneous investigation of all these cases. We obtain a correspondence theorem expressing weighted Hurwitz numbers in terms of tropical covers. Using this tropical approach, we generalise most known structural results previously obtained for the aforementioned special cases to all weighted Hurwitz numbers. In particular, we study their polynomiality and derive wall--crossing formulae. Moreover, we introduce elliptic weighted Hurwitz numbers and derive tropical mirror symmetry for these new invariants, i.e. we prove that their generating function is quasimodular and that they may be expressed as Feynman integrals.
\end{abstract}

\section{Introduction}
Double Hurwitz numbers $H_g(\mu,\nu)$ count branched genus $g$ covers of $\mathbb{P}^1$ with prescribed ramification profiles $\mu$ and $\nu$ over $0$ and $\infty$, and simple ramification over $r$ additional branch points. In seminal work \cite{okounkov2000todaequationshurwitznumbers}, Okounkov proved that the generating function of double Hurwitz numbers in a double symmetric power sum expansion is a $\tau$--function of the 2D Toda lattice. Ever since, the same has been proved for many related enumerative invariants, e.g. monotone Hurwitz numbers \cite{goulden2016} and strictly monotone Hurwitz numbers \cite{alexandrov2016ramifications}, as well as completed cycles Hurwitz numbers -- an important special case of descendant Gromov--Witten invariants \cite{okounkov2006gromov}.

In \cite{orlov2001hypergeometric} Orlov and Scherbin introduced the notion of \textbf{hypergeometric $\tau$--functions of the 2D Toda lattice}. The generating functions of virtually all known enumerative invariants of $\mathbb{P}^1$, including all variants of Hurwitz numbers mentioned above, with two relative conditions prescribed by partitions $\mu$ and $\nu$ yield hypergeometric $\tau$--functions of the Toda lattice.\footnote{The recently introduced \textbf{leaky Hurwitz numbers} \cite{cavalieri2025pluricanonical} provide a distinguished exception.} It is therefore natural to ask, given a hypergeometric $\tau$--function, do the coefficients in its double symmetric power sum expansion have some enumerative meaning? There is a plethora of literature on this question -- see \cite{harnad2016weighted} for an overview -- and the general answer is affirmative. In particular, in \cite{guaypaquet20152dtodataufunctionscombinatorial} a general method for interpreting the coefficients of $\tau$--functions of hypergeometric type as enumerations akin to Hurwitz numbers was introduced. Ultimately, this led to the introduction of \textbf{weighted Hurwitz numbers} $H_g^G(\mu,\nu)$ ($H_g^{\tilde{G}}(\mu,\nu)$) which depend of a \textbf{weight generating function} $G(z)$ (resp. its dual $\tilde{G}(z)$). Weighted Hurwitz numbers provide a unified framework for virtually all Hurwitz--type counts by specifying the weighted generating function $G$ or $\tilde{G}$. For example for $G(z)=e^z$ one recovers classical double Hurwitz numbers, while $\tilde{G}(z)=\frac{1}{1-z}$ and $G(z)=1+z$ yield monotone and strictly monotone Hurwitz numbers. For $\mathcal{S}(z)=2\mathrm{sinh(z/2)}/z$ and $G(z)=\mathrm{exp}(\mathcal{S}(\partial_{z})\frac{z^r}{r!})$ we recover completed cycles Hurwitz numbers.

\subsection{Structural properties}
Hurwitz numbers are known to satisfy many interesting properties. In seminal work \cite{goulden2003geometrydoublehurwitznumbers}, Goulden, Jackson and Vakil proved that for fixed $g$ and length of $\mu$ and $\nu$, double Hurwitz numbers $H_g(\mu,\nu)$ are piecewise polynomial in the entries of $\mu$ and $\nu$. The regions of polynomiality are the complement of the resonance arrangement. In a series of works \cite{shadrin2008chamber,cavalieri2010chamberstructuredoublehurwitz,johnson2010doublehurwitznumbersinfinite} this polynomiality was further investigated. In particular, it was proved that the difference of polynomials in adjacent regions of polynomiality may be expressed in terms of double Hurwitz numbers with smaller input data. This is called a \textbf{wall--crossing formula}. Analogous results for completed cycles Hurwitz numbers were proved in \cite{Shadrin_2012}. Further extensions to monotone and strictly monotone Hurwitz numbers were proved to be true in \cite{goulden2016,hahn2018wall,hahn2019tropicaljucyscovers,hahn2019monodromy,hahn2019wallcrossingrecursionformulaetropical}. We wish to highlight the \textbf{tropical geometry} approach employed in \cite{cavalieri2010chamberstructuredoublehurwitz}. Tropical geometry is a relatively new field of mathematics that bridges algebraic geometry and combinatorics. In \cite{cavalieri2010tropical}, a tropical interpretation of double Hurwitz numbers was derived, i.e. double Hurwitz numbers were expressed a weighted enumeration of maps between graphs. These maps are called \textbf{tropical covers}. The piecewise polynomiality of double Hurwitz numbers is almost immediate from this combinatorial interpretation in conjunction with Ehrhart theory. A careful combinatorial analysis reveals the wall--crossing behaviour. A similar tropical approach was used in \cite{hahn2019monodromy,hahn2019tropicaljucyscovers,hahn2019wallcrossingrecursionformulaetropical} to extend these results for monotone and strictly monotone Hurwitz numbers.

Another important property of Hurwitz numbers arises when one studies branched coverings of elliptic curves instead of $\mathbb{P}^1$. The resulting analogous enumerations are called \textbf{elliptic Hurwitz numbers}. As elliptic curves are famously the simplest Calabi--Yau manifolds, elliptic Hurwitz numbers have been studied in the context of mirror symmetry. Two seminal results were obtained by Dijkgraaf in the 1990s \cite{dijkgraaf1995mirror}, where he confirmed two predictions of mirror symmetry taking shape as structural properties of Hurwitz numbers. In particular, he proved that elliptic Hurwitz numbers may be expressed as Feynman integrals and their generating function is a quasimodular form. Much attention has been devoted to refining the mathematics surrounding these two results, see e.g. \cite{10.1007/978-1-4612-4264-2_6,bloch2000character}. In \cite{B_hm_2015,goujard2017countingfeynmanlikegraphsquasimodularity} a tropical geometry approach was developed. It was observed that the Feynman integrals used in Dijkgraaf's work naturally arise from combinatorial types of tropical covers. Moreover, the quasimodularity could be refined as it was proved that indeed the contribution of each combinatorial of tropical covers is quasimodular. These results were extended to elliptic completed cycles Hurwitz numbers in \cite{tropmirror2022}.  In \cite{Hahn_2022} it was proved that the quasimodularity and refined quasimodularity extends to monotone and strictly monotone Hurwitz numbers as well.

\subsection{Aims and contributions}
The aim of this work is to introduce a method for the study enumerative invariants arising from $\tau$--functions of hypergeometric type via tropical geometry. In our first main result in \cref{thm-tropcorr}, we obtain a tropical correspondence theorem for weighted double Hurwitz numbers $H^G_g(\mu,\nu)$ and $H^{\tilde{G}}_g(\mu,\nu)$. Our strategy is to employ the cut--and--join operator for weighted Hurwitz numbers in the Fock space formalism obtained in \cite{alexandrov2018fermionic}. It is well--known that an appropriate version of Wick's theorem \cite{wick1950evaluation} allows to express vacuum expecation in the bosonic Fock space via tropical combinatorics, see e.g. \cite{block2016refined,cavalieri2021counting}. The same applies here; a careful re--writing of the cut--and--join operator allows an expression of weighted Hurwitz numbers via tropical covers. Quite interestingly, the class of covers we obtain here is the same as the one obtained in \cite{hahn2019tropicaljucyscovers} for monotone and strictly monotone double Hurwitz numbers. The difference is that we count the same covers with a different weighting depending on the choice of weight generating function. Moreover, we define weighted elliptic Hurwitz numbers in \cref{sec-weightell} in analogy to the classical setting. We obtain a correspondence theorem for them as well in \cref{thm-ellcorr}.

Employing this tropical correspondence theorem, we show the above mentioned structural properties of classical, completed cycles, monotone and strictly monotone Hurwitz numbers extend to the entire weighted Hurwitz numbers problem. Indeed, we derive piecewise polynomiality in \cref{thm-piecewisepoly}) and wall--crossing formulae in \cref{wallcrossingthm}. Moreover, we prove quasimodularity of elliptic weighted Hurwitz numbers in \cref{thm-quasimod} and refined quasimodularity in \cref{thm-refinquasi}. Finally, we obtain an expression in terms of Feynman integrals in \cref{thm-feynman}. Therefore, we offer the point of view that in some sense the applicability of tropical geometry techniques to Hurwitz--type enumerations is founded in the integrable structure of these invariants.

\subsection{Structure}
We begin with a preliminary section on tropical geometry, the Fock space formalism, weighted Hurwitz numbers, hypergeometric $\tau$--functions and the theory surrounding quasimodular forms in \cref{sec-prelim}. We follow the motto "bosonification is tropicalisation" in \cref{sec-bostrop} to derive a tropical correspondence theorem for weighted double Hurwitz numbers and study their polynomiality properties in \cref{sec-poly}. Finally, we study elliptic weighted Hurwitz numbers in \cref{sec-tropmirr} and establish tropical mirror symmetry.

\subsection{Acknowledgements.} This project grew out of a Research in Paris stay of M.A.H. and J.W. at the Institut Henri Poincaré in 2019. We thank the institute for their hospitality. After laying dormant for many years, B.O. worked out the many missing details for his thesis under M.A.H.'s supervision. B.O.'s work was partially supported by the Hamilton Trust fund.

\section{Preliminaries}
\label{sec-prelim}
We briefly recall the necessary basics around tropical covers, Fock spaces, hypergeometric $\tau$-functions and (weighted) Hurwitz numbers.

\subsection{Tropical covers}
We begin by defining abstract tropical curves.

\begin{definition}
    An abstract tropical curve is a connected graph $\Gamma$ with the following data:
    \begin{enumerate}
        \item The vertex set is denoted by $V(\Gamma)$ and the edge set by $E(\Gamma)$.
        \item The $1$-valent vertices of $\Gamma$ are called \textit{leaves} and their set is denoted by $V^\infty(\Gamma)$. The remaining vertices are called \textit{inner vertices} and their set is denoted by $V^0(\Gamma)$.
        \item The edges adjacent to leaves are called \textit{ends} and their set is denoted by $E^\infty(\Gamma)$. The remaining edges are called \textit{bounded edges} with their set denoted by $E^0(\Gamma)$.
        \item There is a length function
        \begin{align}
            \ell\colon E(\Gamma)\to\mathbb{R}_{\ge0}\cup\{\infty\}\\
            e\mapsto\ell(e),
        \end{align}
        such that $\ell^{-1}(\infty)=E^\infty(\Gamma)$.
        \item There is a map
        \begin{align}
            g\colon V^0(\Gamma)\to\mathbb{Z}_{\ge0}.
        \end{align}
        We call $g(v)$ for $v\in V^0(\Gamma)$ the genus of $v$.
    \end{enumerate}
    Moreover, we define the genus of $\Gamma$ as $g(\Gamma)=b^1(\Gamma)+\sum_{v\in V^0(\Gamma)}g(v)$, where $b^1(\Gamma)$ is the first Betti number of $\Gamma$, i.e. $|V(\Gamma)|-|E(\Gamma)|=1-g$.
    An isomorphism of abstract tropical curves is a map of the underlying graph that respects the maps $\ell$ and $g$.

    When we drop the length function, we obtain the \textit{combinatorial type} of an abstract tropical curve $\Gamma$.
\end{definition}

\begin{definition}
    A tropical cover $\pi: \Gamma_1 \to \Gamma_2$ is a surjective harmonic map of abstract tropical curves, i.e.: 
\begin{itemize}
\itemsep0em 
    \item $\pi(V(\Gamma_1))\subset V(\Gamma_2)$;
    \item $\pi^{-1}(E(\Gamma_2)) \subset E(\Gamma_1)$;
    \item The function $\pi$ is piecewise integer affine linear. That is when we take any $e \in E(\Gamma_1)$, if we are to interpret $e$ and $\pi(e)$ as $[0,l(e)]$ and $[0, l(\pi(e))]$ respectively, then we must have that $\pi|_{e}$ is a bijective integer linear function, $[0,l(e)] \to [0, l(\pi(e))], t \mapsto \omega(e)\cdot t$, for $\omega(e) \in \Z$. If we have that $\pi(e) \in V(\Gamma_2)$ we set $\omega(e) = 0$. The quantity $\omega(e)$ is called the weight of $e$.
    \item $\pi$ is a harmonic map. That is, consider some $v \in V(\Gamma_2)$ and an edge $e$ adjacent to $v$. Then we define the local degree
    \[
    d_v = \sum_{\tilde{e} \mapsto e} \omega(\tilde{e})
    \]
    as the sum of weights of all edges $\tilde{e}$ mapping onto the chosen edge $e$. The harmonicity condition is that local degree at $v$ is well defined (independent of choice of $e$).
    This condition is also referred to as the balancing condition.
\end{itemize}
 We call two covers $\pi_1: \Gamma_1 \to \Gamma'$ and $\pi_2: \Gamma_2 \to \Gamma'$ equivalent if there exists an isomorphism of abstract tropical curves $g: \Gamma_1 \to \Gamma_2$ such that $\pi_2 \circ g = \pi_1$.
\end{definition}
We note that when all weights are strictly positive integers, then (after suitable refinement of $\Gamma_1$ and $\Gamma_2$) the image and preimage of vertices are precisely vertices.

\begin{definition}
    Let $\pi\colon\Gamma_1\to\Gamma_2$ a tropical cover. Let $v\in V(\Gamma_2)$, then we call $d=\sum_{w\in\pi^{-1}(v)}d_w$ the \textit{degree} of $\pi$. By the harmonicity condition, $d$ is independent of the choice of $v$.

    Moreover, for $e\in E(\Gamma_2)$, we define the ramification profile of $e$ as the partition $\mu_e$ of weights of edges in the preimage of $e$.
\end{definition}

In this article, we focus on tropical covers with two particular targest: the tropical projective line $\mathbb{P}^1_{\textit{trop}}$ and the tropical elliptic curve $E_{\textit{trop}}$.

\begin{definition}
    The tropical projective line is defined as
    \begin{equation}
         \mathbb{P}^1_{\textit{trop}}=\mathbb{R}\cup\{\pm\infty\},
    \end{equation}
    where we may add two valent vertices $v\in\mathbb{R}$. The tropical elliptic curve $E_{\textit{trop}}$ is the circle, where again we may add two valent vertices.
\end{definition}

The tropical projective line and tropical elliptic curve as illustrated in \cref{fig:linecircle}.

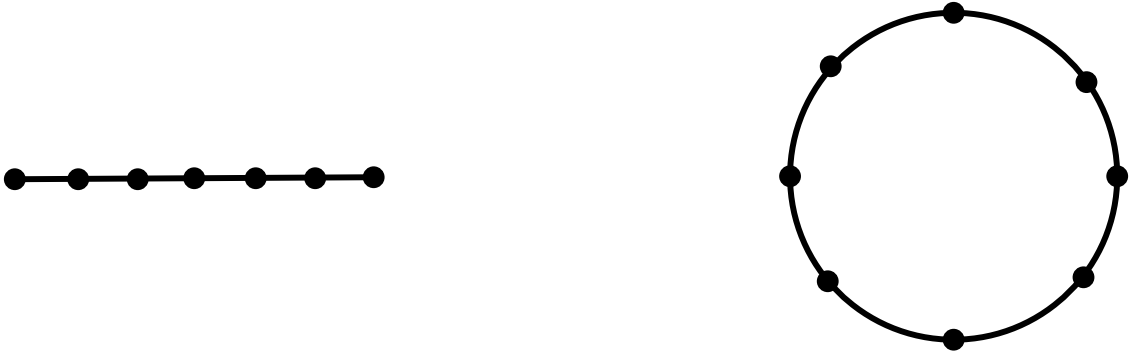
\begin{figure}
    \centering

\tikzset{every picture/.style={line width=0.75pt}} 

\begin{tikzpicture}[x=0.75pt,y=0.75pt,yscale=-1,xscale=1]

\draw [line width=2.25]    (29,120) -- (210,119) ;
\draw  [line width=2.25]  (420,118.5) .. controls (420,72.94) and (456.94,36) .. (502.5,36) .. controls (548.06,36) and (585,72.94) .. (585,118.5) .. controls (585,164.06) and (548.06,201) .. (502.5,201) .. controls (456.94,201) and (420,164.06) .. (420,118.5) -- cycle ;
\draw  [fill={rgb, 255:red, 0; green, 0; blue, 0 }  ,fill opacity=1 ] (24,120) .. controls (24,117.24) and (26.24,115) .. (29,115) .. controls (31.76,115) and (34,117.24) .. (34,120) .. controls (34,122.76) and (31.76,125) .. (29,125) .. controls (26.24,125) and (24,122.76) .. (24,120) -- cycle ;
\draw  [fill={rgb, 255:red, 0; green, 0; blue, 0 }  ,fill opacity=1 ] (205,119) .. controls (205,116.24) and (207.24,114) .. (210,114) .. controls (212.76,114) and (215,116.24) .. (215,119) .. controls (215,121.76) and (212.76,124) .. (210,124) .. controls (207.24,124) and (205,121.76) .. (205,119) -- cycle ;
\draw  [fill={rgb, 255:red, 0; green, 0; blue, 0 }  ,fill opacity=1 ] (56,120) .. controls (56,117.24) and (58.24,115) .. (61,115) .. controls (63.76,115) and (66,117.24) .. (66,120) .. controls (66,122.76) and (63.76,125) .. (61,125) .. controls (58.24,125) and (56,122.76) .. (56,120) -- cycle ;
\draw  [fill={rgb, 255:red, 0; green, 0; blue, 0 }  ,fill opacity=1 ] (86,120) .. controls (86,117.24) and (88.24,115) .. (91,115) .. controls (93.76,115) and (96,117.24) .. (96,120) .. controls (96,122.76) and (93.76,125) .. (91,125) .. controls (88.24,125) and (86,122.76) .. (86,120) -- cycle ;
\draw  [fill={rgb, 255:red, 0; green, 0; blue, 0 }  ,fill opacity=1 ] (114.5,119.5) .. controls (114.5,116.74) and (116.74,114.5) .. (119.5,114.5) .. controls (122.26,114.5) and (124.5,116.74) .. (124.5,119.5) .. controls (124.5,122.26) and (122.26,124.5) .. (119.5,124.5) .. controls (116.74,124.5) and (114.5,122.26) .. (114.5,119.5) -- cycle ;
\draw  [fill={rgb, 255:red, 0; green, 0; blue, 0 }  ,fill opacity=1 ] (145.5,119.5) .. controls (145.5,116.74) and (147.74,114.5) .. (150.5,114.5) .. controls (153.26,114.5) and (155.5,116.74) .. (155.5,119.5) .. controls (155.5,122.26) and (153.26,124.5) .. (150.5,124.5) .. controls (147.74,124.5) and (145.5,122.26) .. (145.5,119.5) -- cycle ;
\draw  [fill={rgb, 255:red, 0; green, 0; blue, 0 }  ,fill opacity=1 ] (175.5,119.5) .. controls (175.5,116.74) and (177.74,114.5) .. (180.5,114.5) .. controls (183.26,114.5) and (185.5,116.74) .. (185.5,119.5) .. controls (185.5,122.26) and (183.26,124.5) .. (180.5,124.5) .. controls (177.74,124.5) and (175.5,122.26) .. (175.5,119.5) -- cycle ;
\draw  [fill={rgb, 255:red, 0; green, 0; blue, 0 }  ,fill opacity=1 ] (415,118.5) .. controls (415,115.74) and (417.24,113.5) .. (420,113.5) .. controls (422.76,113.5) and (425,115.74) .. (425,118.5) .. controls (425,121.26) and (422.76,123.5) .. (420,123.5) .. controls (417.24,123.5) and (415,121.26) .. (415,118.5) -- cycle ;
\draw  [fill={rgb, 255:red, 0; green, 0; blue, 0 }  ,fill opacity=1 ] (580,118.5) .. controls (580,115.74) and (582.24,113.5) .. (585,113.5) .. controls (587.76,113.5) and (590,115.74) .. (590,118.5) .. controls (590,121.26) and (587.76,123.5) .. (585,123.5) .. controls (582.24,123.5) and (580,121.26) .. (580,118.5) -- cycle ;
\draw  [fill={rgb, 255:red, 0; green, 0; blue, 0 }  ,fill opacity=1 ] (434,171.5) .. controls (434,168.74) and (436.24,166.5) .. (439,166.5) .. controls (441.76,166.5) and (444,168.74) .. (444,171.5) .. controls (444,174.26) and (441.76,176.5) .. (439,176.5) .. controls (436.24,176.5) and (434,174.26) .. (434,171.5) -- cycle ;
\draw  [fill={rgb, 255:red, 0; green, 0; blue, 0 }  ,fill opacity=1 ] (563,169.5) .. controls (563,166.74) and (565.24,164.5) .. (568,164.5) .. controls (570.76,164.5) and (573,166.74) .. (573,169.5) .. controls (573,172.26) and (570.76,174.5) .. (568,174.5) .. controls (565.24,174.5) and (563,172.26) .. (563,169.5) -- cycle ;
\draw  [fill={rgb, 255:red, 0; green, 0; blue, 0 }  ,fill opacity=1 ] (497.5,201) .. controls (497.5,198.24) and (499.74,196) .. (502.5,196) .. controls (505.26,196) and (507.5,198.24) .. (507.5,201) .. controls (507.5,203.76) and (505.26,206) .. (502.5,206) .. controls (499.74,206) and (497.5,203.76) .. (497.5,201) -- cycle ;
\draw  [fill={rgb, 255:red, 0; green, 0; blue, 0 }  ,fill opacity=1 ] (435.5,63) .. controls (435.5,60.24) and (437.74,58) .. (440.5,58) .. controls (443.26,58) and (445.5,60.24) .. (445.5,63) .. controls (445.5,65.76) and (443.26,68) .. (440.5,68) .. controls (437.74,68) and (435.5,65.76) .. (435.5,63) -- cycle ;
\draw  [fill={rgb, 255:red, 0; green, 0; blue, 0 }  ,fill opacity=1 ] (564.5,71) .. controls (564.5,68.24) and (566.74,66) .. (569.5,66) .. controls (572.26,66) and (574.5,68.24) .. (574.5,71) .. controls (574.5,73.76) and (572.26,76) .. (569.5,76) .. controls (566.74,76) and (564.5,73.76) .. (564.5,71) -- cycle ;
\draw  [fill={rgb, 255:red, 0; green, 0; blue, 0 }  ,fill opacity=1 ] (497.5,36) .. controls (497.5,33.24) and (499.74,31) .. (502.5,31) .. controls (505.26,31) and (507.5,33.24) .. (507.5,36) .. controls (507.5,38.76) and (505.26,41) .. (502.5,41) .. controls (499.74,41) and (497.5,38.76) .. (497.5,36) -- cycle ;

\end{tikzpicture}
\caption{An illustration of the tropical projective line $\mathbb{P}^1_{\textit{trop}}$ on the left and the tropical elliptic curve $E_{\textit{trop}}$ on the right.}
    \label{fig:linecircle}
\end{figure}

\subsection{Semi--infinite wedge formalism}
We now move to the basic notions surrounding the semi-infinite wedge formalism. First, we introduce some notation. Let $\varsigma(z)=2\mathrm{sinh}(z/2)$ and $\mathcal{S}(z)=\varsigma(z)/z$. We recall that
\begin{equation}
    \frac{1}{\varsigma(z)}=\sum_{g\ge0}c_{2g-1}z^{2g-1}
\end{equation}
with $c_{-1}=1$ and for $g\ge0$ we have
\begin{equation}
    c_{2g-1}=-\frac{(2^{2g-1}-1)B_{2g}}{2^{2g-1}(2g)!}=(-1)^g\int_{\overline{\mathcal{M}}_{g,1}}\lambda_g\psi_1^{2l-2}=(-1)^g\langle\tau_{2g-2(\omega)}\rangle_{g,1}^{\mathbb{P}^1},
\end{equation}
where
\begin{enumerate}
    \item $B_k$ is the $k$-th Bernoulli number.
    \item $\overline{\mathcal M}_{g,n}$ is the moduli space of stable genus $g$ curves with $n$ marked points.
    \item $\lambda_g$ is the top Chern class of the Hodge bundle $\mathbb{E}\to\overline{\mathcal M}_{g,n}$.
    \item $\psi_i$ is the first Chern class of the $i$--th cotangent bundle $\mathbb{L}\to\overline{\mathcal M}_{g,n}$ whose fiber at $(C,p_1,\dots,p_n)$ is the cotangent space of $C$ at $p_i$.
\end{enumerate}

Moreover, for two partitions $\mu$, $\nu$ of a positive integer $d$ and integers $k_1,\dots,k_n\ge0$ with $\sum k_i=2g+2d-2$, we denote
\begin{equation}
    \langle\mu| \tau_{k_1}(\omega)\cdots\tau_{k_1}(\omega)|\nu\rangle_{g,n}^{\mathbb{P}^1}=\int_{[\mathcal{M}_{g,n}(\mathbb{P}^1,\mu,\nu)]^{\textit{vir}}}\prod\mathrm{ev}_i^\ast(\omega)\psi_i^{k_i},
\end{equation}

where $\mathcal{M}_{g,n}(\mathbb{P}^1,\mu,\nu)$ is the moduli space of stable maps to $\mathbb{P}^1$ relative to $\mu$ and $\nu$. Moreover, the $i$--th evaluation map $\mathrm{ev}_i\colon\mathcal{M}_{g,n}(\mathbb{P}^1,\mu,\nu)\to\mathbb{P}^1$ is defined as $\mathrm{ev}_i((C,p_1,\dots,p_n,f))=f(p_i)$ for $f\colon C\to\mathbb{P}^1$ the relative stable map. Further, $\omega$ denotes the point class in $\mathbb{P}^1$. For more details, we refer to \cite{vakil2006modulispacecurvesgromovwitten}. 

We also introduce superscripts $\circ$ and $\bullet$ to specify whether we deal with \textit{connected} or \textit{non-necessarily connected} (also \textit{disconnected}) curves respectively.

Now, let $V=\bigoplus_{i \in \Z } \underline{i + \frac{1}{2}}$ and define the semi--infinite wedge space

\begin{equation}
    \iwedge V \defeq \bigoplus_{(i_k)}\underline{i_1} \wedge \underline{i_2} \wedge \cdots
\end{equation}
summing over all decreasing sequences of half integers such that $i_k + k - 1/2 = c$ for sufficiently large $k$ for some integer $c$ which we call the \textit{charge}. We also have an inner product on $\iwedge V$ by defining the basis vectors $v_S$ to be orthonormal.

We will in particular care about the charge $0$ sector $\mathcal{V}_0$ defined as
\begin{equation}
    \mathcal{V}_0=\bigoplus_{n\in\mathbb{N}}\bigoplus_{\lambda\vdash n}\mathcal{C} \lambda,
\end{equation}
where $v_\lambda=\lambda_1-1/2\wedge\lambda_2-3/2\wedge\lambda_3-5/2\wedge\dots$ for integer partitions $\lambda$. We note the convention that partitions here are semi-infinite with all $0$s at the end. On $\mathcal{V}_0$ we have a natural inner product $\left< - \mid - \right>$ by imposing the basis elements to be orthonormal.

The vector $v_\emptyset$ corresponding to the empty partition is called the \textit{vacuum vector} and we denote it by $|0\rangle$. We further construct the covacuum vector in the dual $\vb_0^*$, which we denote $\left< 0 \right|$. 
For any operator $\mathcal P$ acting on $\vb_0$, we define the vacuum expectation $\left< \mathcal P \right>^\bullet \defeq \left< 0 \mid \mathcal P \mid 0 \right>^\bullet$. 
We construct operators $\psi_k$ for every half integer $k \in \Z + \frac 12$, $\psi_k: \vb_ 0 \to \vb_0, (\underline{i_1} \wedge \underline{i_2} \wedge \dots) \mapsto (\underline{k} \wedge \underline{i_1} \wedge \underline{i_2} \wedge \dots) $, and let $\psi_k^\dagger$ be the adjoint operator with respect to the inner product $\left< - \mid - \right>$ . 

We may define operators on $\mathcal{V}_0$ via normally ordered products of $\psi$ operators as follows:   
\[
E_{i,j} \defeq \ : \hspace{-.2em} \psi_i \psi_j^\dagger \hspace{-.3em}: \ = \begin{cases}
    \psi_i \psi_j^\dagger, \quad \, \, \, \, j>0, \\
    -\psi_j^\dagger \psi_i, \quad j<0.
\end{cases}
\]

For any integer $n$ and a formal variable $z$ we may then obtain the operators
\[
\mathcal{E}_n(z) \defeq \sum_{k \in \Z + \frac 12} e^{z(k - \frac n2 )} E_{k-n,k} + \frac{\delta_{n,0}}{\varsigma(z)}, \quad \tilde{\mathcal E}_0 \defeq \sum_{k \in \Z + \frac 12} e^{zk} E_{k,k}, \quad \alpha_n \defeq \mathcal{E}_n(0) = \sum_{k \in \Z + \frac 12} E_{k-n,k}.
\]
In particular, we have the commutation relation 
\[
[ \alpha_k, \alpha_l] = k \delta_{k+l,0}.
\]
We further extract the \textbf{completed cycles operators} $\fb_k$ from $\mathcal{E}_0$ as
\[
\fb_k \defeq [z^k] \tilde{\mathcal{E}}_0(z) = \sum_{l \in \Z + \frac 12} \frac{l^k}{k!} E_{l,l}.
\]

We have the following expansion for the operators $\mathcal{F}_k$.

\begin{proposition}[\cite{hahn2019tropicaljucyscovers}]
\[
\fb_k = \sum_{g=0}^\infty \sum_{\mathbf{x} \in S \Z^{k+1-2g}} 
\left< \mathbf x^+ \mid \tau_{k-1} \mid  \mathbf x^- \right>_g^{\rsph, \circ} 
    \prod_{0>x_i \in \mathbf{x}} \alpha_{x_i} \prod_{0 < x_j \in \mathbf{x}} \alpha_{x_j}.
\]
\end{proposition}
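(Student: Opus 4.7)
The strategy is to combine the boson--fermion correspondence with the Okounkov--Pandharipande GW/Hurwitz correspondence \cite{okounkov2006gromov}. The operator $\mathcal{F}_k$ commutes with charge and energy, and since it lies in the centraliser of the Heisenberg subalgebra generated by the $\alpha_n$, it admits a unique normal--ordered expansion
\[
\mathcal{F}_k \;=\; \sum_{\mathbf{x}} c_{\mathbf{x}} \prod_{x_i<0}\alpha_{x_i}\prod_{x_j>0}\alpha_{x_j},
\]
indexed by finite multi--subsets $\mathbf{x} \subset \mathbb{Z}\setminus\{0\}$ with $\sum x_i = 0$. The task is thus reduced to identifying the scalar coefficients $c_{\mathbf{x}}$.

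To extract $c_{\mathbf{x}}$, I would sandwich the expansion between the bosonic states $|\mathbf{x}^-\rangle = \prod_{x_i<0}\alpha_{x_i}|0\rangle$ and $\langle\mathbf{x}^+| = \langle 0|\prod_{x_j>0}\alpha_{x_j}$ built from the negative and positive parts of $\mathbf{x}$. Repeated application of the Heisenberg relation $[\alpha_k,\alpha_l]=k\,\delta_{k+l,0}$ shows that only the monomial indexed by $\mathbf{x}$ itself contributes, so that $\langle\mathbf{x}^+|\mathcal{F}_k|\mathbf{x}^-\rangle = z_{\mathbf{x}}\,c_{\mathbf{x}}$ for an explicit combinatorial factor $z_{\mathbf{x}}$ combining the product of the $|x_i|$ with the order of the subgroup of $\mathfrak{S}_{\ell(\mathbf{x})}$ stabilising $\mathbf{x}$.

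The crucial input is then the Okounkov--Pandharipande correspondence, which identifies the resulting bosonic matrix element with stationary descendant Gromov--Witten invariants of $\mathbb{P}^1$ with relative conditions $\mathbf{x}^+$ and $\mathbf{x}^-$ at $0$ and $\infty$ and a single insertion $\tau_{k-1}(\omega)$:
\[
\tfrac{1}{z_{\mathbf{x}}}\langle \mathbf{x}^+|\mathcal{F}_k|\mathbf{x}^-\rangle \;=\; \sum_{g\ge 0}\langle\mathbf{x}^+|\tau_{k-1}(\omega)|\mathbf{x}^-\rangle_g^{\mathbb{P}^1,\circ}.
\]
The virtual dimension of $\overline{\mathcal M}_{g,1}(\mathbb{P}^1,\mathbf{x}^+,\mathbf{x}^-)$ equals $2g-1+\ell(\mathbf{x}^+)+\ell(\mathbf{x}^-)$, which must match the codimension $k$ of the insertion $\tau_{k-1}(\omega)$, forcing $\ell(\mathbf{x}) = k+1-2g$ and explaining the indexing by $\mathbf{x}\in S\mathbb{Z}^{k+1-2g}$ in the statement. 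Substituting $c_{\mathbf{x}}$ back into the normal--ordered expansion yields the claimed formula.

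The main obstacle is the appeal to the Okounkov--Pandharipande correspondence: once that deep input is available, the remaining steps are routine Heisenberg--algebra bookkeeping together with the Riemann--Hurwitz dimension count. A self--contained alternative would be to expand the vertex operator $\tilde{\mathcal E}_0(z)$ directly via the bosonisation formula for $\widehat{\mathfrak{gl}}_\infty$ and to match the resulting coefficients against Hodge integrals through ELSV--type identities, essentially reproducing the calculations underlying \cite{okounkov2006gromov}.
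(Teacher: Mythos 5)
The paper does not prove this proposition: it is imported from \cite{hahn2019tropicaljucyscovers}, where it ultimately goes back to Okounkov--Pandharipande \cite{okounkov2006gromov}. Your overall architecture --- expand $\mathcal{F}_k$ in normal--ordered monomials in the $\alpha_n$, extract the coefficients by pairing against $\langle\mathbf{x}^+|$ and $|\mathbf{x}^-\rangle$, and identify the resulting matrix elements with stationary descendants via the GW/Hurwitz correspondence, with the dimension count $k=2g-1+\ell(\mathbf{x})$ forcing $\ell(\mathbf{x})=k+1-2g$ --- is the right one. Two of your intermediate steps are wrong as stated, however, and the second one changes the answer.

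First, $\mathcal{F}_k$ does \emph{not} lie in the centraliser of the Heisenberg algebra: the charge--zero sector $\mathcal{V}_0$ is an irreducible module over the $\alpha_n$, so any operator commuting with all of them acts as a scalar, which $\mathcal{F}_k$ does not. The existence and uniqueness of the normal--ordered expansion instead follows from the (different, standard) fact that normal--ordered monomials in the $\alpha_n$ topologically span the charge--preserving operators on $\mathcal{V}_0$. Second, the claim that only the monomial indexed by $\mathbf{x}$ itself contributes to $\langle\mathbf{x}^+|\mathcal{F}_k|\mathbf{x}^-\rangle$ fails whenever $\mathbf{x}$ contains a balanced sub--multiset, e.g.\ a pair $\{a,-a\}$: the shorter monomial indexed by $\mathbf{x}\setminus\{a,-a\}$ also has a nonzero matrix element, since the leftover creator--annihilator pair contracts to $a\neq 0$. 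The linear system for the coefficients $c_{\mathbf{x}}$ is therefore only triangular, not diagonal, and inverting it is exactly the passage from the \emph{disconnected} correlators computed by the vacuum expectation to the \emph{connected} invariants $\langle\,\cdot\,\rangle^{\circ}$ that appear in the statement. Followed literally, your extraction would place disconnected invariants in the coefficients, which is not the claimed formula. (A further bookkeeping point: in this paper's conventions the automorphism factors $|\aut(\mathbf{x}^\pm)|$ are already absorbed into the bracket $\langle\mathbf{x}^+|\tau_{k-1}|\mathbf{x}^-\rangle_g^{\rsph,\circ}$, cf.\ the closed formula of \cref{polynomialgw} and the GW/Hurwitz identity as stated in the preliminaries, so your normalisation $z_{\mathbf{x}}$ must be matched against that convention to avoid double--counting.)
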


We denote $\fb_0 = C$ the charge operators and $\fb_1 = E$ the energy operator. 

\begin{remark}
    We recall that the operators $\mathcal{F}_k$ were famously introduced by Okounkov and Pandharipande in \cite{okounkov2006gromov} where they proved the \textbf{Gromov--Witten/Hurwitz correspondence}
    \begin{equation}
        \langle\mu| \tau_{k_1}(\omega)\cdots\tau_{k_1}(\omega)|\nu\rangle_{g,n}^{\mathbb{P}^1}=\langle\alpha_\mu\mid\mathcal{F}_{k_1+1}\cdots\mathcal{F}_{k_n+1}\mid\alpha_{-\nu}\rangle,
    \end{equation}
    where $\alpha_\mu=\prod \alpha_{\mu_i}$ and $\alpha_{-\nu}=\prod\alpha_{-\nu_j}$.
\end{remark}

As a next step, we may now define a family of operators $\gb_l$ via
\[
\frac{\tilde{\mathcal{E}}_0(z)}{\varsigma(z)} - E  = \sum_{l \geq 1} \frac{z^l}{l!} \gb_{l+1},
\]
which were first introduced in \cite{hahn2019tropicaljucyscovers} in the context of (strictly) monotone Hurwitz numbers. They admit the following expansion

\[
\label{equ-gviafoper}
\gb_{l+1} \defeq l! \sum_{k=0}^\infty c_{2k-1}\fb_{l-(2k-1)}. 
\]

By \cite{hahn2019tropicaljucyscovers}, we have the following.

\begin{lemma}\label{gbasgromovwitten}
\[
\gb_{l+1} = l! \hspace{-2em}\sum_{\substack{g_1,g_2 = 0 \\ \mathbf{x} \in S\Z^{l+2-2g_1-2g_2}}} \hspace{-1.4em} \Big< \tau_{2g_2 -2}(\omega) \Big>_{g_2}^{\rsph, \circ} \Big< \mathbf{x}^+ \mid \tau_{2g_1 -2+ \ell(\mathbf{x}^+) + \ell(\mathbf{x}^-)}(\omega) \mid \mathbf{x}^-   \Big>_{g_1}^{\rsph, \circ} \prod_{0 > x_i \in \mathbf{x}} \alpha_{x_i} \prod_{0 < x_j \in \mathbf{x}} \alpha_{x_j}.
\]
\end{lemma}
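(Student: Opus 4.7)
The plan is to obtain the lemma as a direct re-writing of the defining relation
\[
\gb_{l+1} = l!\sum_{g_2 \ge 0} c_{2g_2-1}\,\fb_{l-(2g_2-1)}
\]
recalled in the preceding display, by inserting two identifications that are already available in the text: first, the explicit formula for the coefficients $c_{2g_2-1}$ as one-point Gromov--Witten integrals on $\overline{\mathcal{M}}_{g_2,1}$, which gives exactly the factor $\langle \tau_{2g_2-2}(\omega)\rangle_{g_2}^{\mathbb{P}^1,\circ}$; and second, the Okounkov--Pandharipande-type Fock space expansion of $\fb_k$ recalled in the previous proposition.

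Concretely, I would set $k \defeq l-(2g_2-1) = l-2g_2+1$ and apply the expansion of $\fb_k$ to the sum above. This produces an additional sum over a genus $g_1 \ge 0$ and over $\mathbf{x} \in S\mathbb{Z}^{k+1-2g_1}$, and the $\mathbf{x}$-index set becomes $S\mathbb{Z}^{l+2-2g_1-2g_2}$, matching the statement. The $\tau$-subscript inside the bracket is initially $k-1 = l-2g_2$. Since $\ell(\mathbf{x}^+)+\ell(\mathbf{x}^-) = l+2-2g_1-2g_2$ by the constraint on $\mathbf{x}$, one may rewrite
\[
l-2g_2 = 2g_1-2+\ell(\mathbf{x}^+)+\ell(\mathbf{x}^-),
\]
which converts the index into the form appearing in the statement. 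Combining the one-point $g_2$-integral factor with the disconnected bracket produces the product of Gromov--Witten invariants shown in the lemma, and the $\alpha$-products are carried through unchanged.

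The main obstacle is really a bookkeeping issue: tracking sign conventions and boundary cases. The $c_{2g_2-1}$ carry an alternating $(-1)^{g_2}$ that must be absorbed into the convention for the one-point invariants used in the statement, and one needs to check the edge case $g_2=0$, where $c_{-1}=1$ has to match the convention $\langle \tau_{-2}(\omega)\rangle_{0}^{\mathbb{P}^1,\circ} = 1$, so that the term $l!\,\fb_{l+1}$ is correctly recovered. Beyond this, every other step is a routine identification of the two sums, and no genuinely new input beyond the earlier proposition and the explicit formula for $c_{2g-1}$ is needed.
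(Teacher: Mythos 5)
Your derivation is correct and is exactly the intended one: the paper itself offers no proof of this lemma (it is quoted from \cite{hahn2019tropicaljucyscovers}), and the argument there is precisely your substitution of the expansion of $\fb_k$ into $\gb_{l+1} = l!\sum_{g_2\ge 0} c_{2g_2-1}\fb_{l-(2g_2-1)}$, together with the index bookkeeping $k-1 = l-2g_2 = 2g_1-2+\ell(\mathbf{x}^+)+\ell(\mathbf{x}^-)$ that you carry out. The sign point you flag is real but harmless: the convention actually used throughout the paper (e.g.\ $1/\mathcal{S}(z)=\sum_g \langle\tau_{2g-2}(\omega)\rangle_g^{\rsph,\circ}z^{2g}$ in the Feynman-integral section) makes $c_{2g-1}=\langle\tau_{2g-2}(\omega)\rangle_g^{\rsph,\circ}$ on the nose, with the $(-1)^g$ belonging to the comparison with the Hodge integral rather than to the one-point invariant, so no extra sign enters the lemma.
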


As derived in \cite{okounkov2006gromov}, the coefficients of this expansion have a closed expression 

\begin{theorem}\label{polynomialgw}
\begin{equation}
\left< \mathbf x^+ \mid  \tau_{2g - 2 + \ell( \mathbf x^+ ) + \ell(\mathbf{x}^-)} \mid \mathbf x^- \right>_g^{\rsph, \circ} = \frac{1}{|\aut(\mathbf x^+)| | \aut(\mathbf{x}^-)|}[z^{2g}]\frac{\prod_{\mathbf{x}_i^+} \mathcal S (\mathbf{x}_i z) \prod_{\mathbf{x}_i^-} \mathcal S (\mathbf{x}_i z)}{\mathcal{S}(z)}.
\end{equation}
\end{theorem}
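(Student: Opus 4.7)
The plan is to prove this by reducing, via the Gromov--Witten/Hurwitz correspondence recalled in the remark above, to a vacuum expectation in the semi-infinite wedge space, which is then evaluated by iterated application of the standard commutator relations for the operators $\mathcal{E}_n(z)$.

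\emph{Step 1: Translation to an operator expectation.} The Gromov--Witten/Hurwitz correspondence rewrites
\[
\langle \mathbf{x}^+ \mid \tau_k(\omega) \mid \mathbf{x}^-\rangle^{\rsph,\circ}_g = \frac{1}{|\aut(\mathbf{x}^+)||\aut(\mathbf{x}^-)|}\langle \alpha_{\mathbf{x}^+}\, \mathcal{F}_{k+1}\, \alpha_{-\mathbf{x}^-}\rangle^{\circ},
\]
for $k = 2g-2+\ell(\mathbf{x}^+)+\ell(\mathbf{x}^-)$, with $\alpha_{\mathbf{x}^+} = \prod_i \alpha_{x^+_i}$ and analogously for $\mathbf{x}^-$. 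Packaging all the $\mathcal{F}_k$ into their generating series $\tilde{\mathcal{E}}_0(z) = \sum_{k\geq 0} z^k \mathcal{F}_k$, which differs from $\mathcal{E}_0(z)$ only by the scalar $1/\varsigma(z)$, reduces the claim to a closed-form evaluation of the connected vacuum expectation $\langle \alpha_{\mathbf{x}^+}\, \mathcal{E}_0(z)\, \alpha_{-\mathbf{x}^-}\rangle^{\circ}$.

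\emph{Step 2: Operator computation via commutators.} Here I would apply the standard commutation relation $[\alpha_n, \mathcal{E}_0(z)] = \varsigma(nz)\, \mathcal{E}_n(z)$, together with the annihilation conditions $\mathcal{E}_n(z)\mid 0\rangle = 0$ for $n>0$ (and the adjoint statement for $n<0$) and the one-point vacuum expectation $\langle 0 \mid \mathcal{E}_0(z) \mid 0\rangle = 1/\varsigma(z)$. Commuting each $\alpha_{x^+_i}$ through $\mathcal{E}_0(z)$ from the left and each $\alpha_{-x^-_j}$ from the right produces, for the connected part, only the Wick-type pairings that route every $\alpha$-operator through the single central insertion of $\mathcal{E}_0(z)$; each such routing contributes a factor $\varsigma(x^\pm_i z)$. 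Collecting these factors, normalising by one power of $z$ per operator via $\mathcal{S}(w) = \varsigma(w)/w$, and multiplying by the vacuum expectation of $\mathcal{E}_0(z)$ should yield
\[
\langle \alpha_{\mathbf{x}^+}\, \mathcal{E}_0(z)\, \alpha_{-\mathbf{x}^-}\rangle^{\circ} = \prod_i x^+_i \prod_j x^-_j \cdot \frac{\prod_i \mathcal{S}(x^+_i z)\prod_j \mathcal{S}(x^-_j z)}{\mathcal{S}(z)},
\]
with the factors $\prod x^\pm$ absorbed into the normalisation of $\alpha_{\mathbf{x}^\pm}$ under the GW/Hurwitz dictionary.

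\emph{Step 3: Coefficient extraction.} Matching $z$-degrees between the operator and Gromov--Witten sides, the genus-$g$ descendant integral with $\tau_{2g-2+\ell(\mathbf{x}^+)+\ell(\mathbf{x}^-)}(\omega)$ corresponds precisely to the $[z^{2g}]$ coefficient of the generating series above, giving the stated formula. The main obstacle will be the careful bookkeeping required to isolate the \emph{connected} part of the vacuum expectation: the commutator recursion is clean on the disconnected side, but identifying exactly which Wick contractions survive -- namely the ``star-shaped'' pairings routed through $\mathcal{E}_0(z)$, as opposed to contractions among the $\alpha$-operators themselves -- and confirming that the surviving contribution collapses to the single product of $\mathcal{S}$-values requires a standard but delicate combinatorial argument. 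A subsidiary check is to track consistently all automorphism factors and the $z$-degree shift arising from the conversion $\mathcal{F}_{k+1} \leftrightarrow z^{k+1}$.
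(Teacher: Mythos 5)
The paper offers no proof of this theorem --- it is imported verbatim from Okounkov--Pandharipande \cite{okounkov2006gromov} --- and your proposal reconstructs essentially the standard derivation from that source: translate via the GW/Hurwitz correspondence into the correlator $\langle \alpha_{\mathbf{x}^+}\,\mathcal{E}_0(z)\,\alpha_{-\mathbf{x}^-}\rangle^\circ$ and evaluate it by commutators. The skeleton is sound: in the connected part every $\alpha$ must indeed be routed through the single $\mathcal{E}$--insertion, and since $[\alpha_n,\mathcal{E}_m(z)]=\varsigma(nz)\,\mathcal{E}_{n+m}(z)$ is independent of $m$ (you only quote the $m=0$ case, but the general one is what the iteration actually uses), the recursion closes and yields $\prod_i\varsigma(x^+_iz)\prod_j\varsigma(x^-_jz)/\varsigma(z)$.

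Two pieces of bookkeeping must be repaired before this is a proof. First, the displayed formula in your Step 2 drops a power of $z$: converting each $\varsigma(xz)=xz\,\mathcal{S}(xz)$ gives
\[
\langle \alpha_{\mathbf{x}^+}\,\mathcal{E}_0(z)\,\alpha_{-\mathbf{x}^-}\rangle^{\circ}
=\prod_i x^+_i\prod_j x^-_j\; z^{\ell(\mathbf{x}^+)+\ell(\mathbf{x}^-)-1}\,
\frac{\prod_i\mathcal{S}(x^+_iz)\prod_j\mathcal{S}(x^-_jz)}{\mathcal{S}(z)},
\]
and it is exactly the factor $z^{\ell(\mathbf{x}^+)+\ell(\mathbf{x}^-)-1}$ that turns the extraction $[z^{k+1}]$, with $k=2g-2+\ell(\mathbf{x}^+)+\ell(\mathbf{x}^-)$, into the stated $[z^{2g}]$. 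As literally written, your formula would instead ask for $[z^{2g-1+\ell(\mathbf{x}^+)+\ell(\mathbf{x}^-)}]$ of an even series, which vanishes whenever $\ell(\mathbf{x}^+)+\ell(\mathbf{x}^-)$ is even. Second, the normalisation: the correspondence as stated in the paper's remark carries no prefactor, while the theorem you are proving has $1/(|\aut(\mathbf{x}^+)||\aut(\mathbf{x}^-)|)$. The consistent convention is the full factor $1/(\mathfrak{z}(\mathbf{x}^+)\mathfrak{z}(\mathbf{x}^-))$ with $\mathfrak{z}(\mu)=|\aut(\mu)|\prod_i\mu_i$; this simultaneously cancels the $\prod_i x^+_i\prod_j x^-_j$ above and leaves precisely the automorphism factors in the claim, rather than having the $\prod x^\pm_i$ ``absorbed'' by fiat as in your Step 2. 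With these two points made explicit, the argument is complete and agrees with the cited derivation.
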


\subsection{(Weighted) Hurwitz numbers and hypergeometric $\tau$--functions}
In this section, we introduce the basics of Hurwitz numbers and explain their relation to hypergeometric $\tau$--functions.

We begin with the definition of \textbf{double Hurwitz numbers}.

\begin{definition}
\label{def-doublhur}
    Let $g\ge0$ be a non--negative integer, $d$ a positive integer and consider two partitions $\mu,\nu$ of $d$. Let $r=2g-2+\ell(\mu)+\ell(\nu)$ and fix $p_1,\dots,p_r\in\mathbb{P}^1$ pairwise distinct points. We consider \textbf{Hurwitz covers of type $(g,\mu,\nu)$}, i.e. holomorphic maps $f\colon S\to\mathbb{P}^1$, such that
    \begin{itemize}
        \item $S$ is a compact genus $g$ Riemann surface,
        \item $f$ has ramification profile $\mu$ over $0$, $\nu$ over $\infty$ and $(2,1\dots,1)$ over $p_i$,
        \item $f$ is unramified else.
    \end{itemize}
    We call two Hurwitz covers $f_1\colon S_1\to \mathbb{P}^1$ and $f_2\colon S_2\to\mathbb{P}^1$ equivalent if there exists an isomorphic $g\colon S_1\to S_2$ with $f_1=f_2\circ g$.

    Then, we define double Hurwitz numbers
    \begin{equation}
        H_g^\bullet(\mu,\nu)=\sum_{[f]}\frac{1}{|\mathrm{Aut}(f)|},
    \end{equation}
    where the sum is taken over all equivalence classes of Hurwitz covers of type $(g,\mu,\nu)$.

    When we additionally impose that $S$ is connected, we obtain connected double Hurwitz numbers $H_g^\circ(\mu,\nu)$.
\end{definition}

The following result was proved by Hurwitz in \cite{Hurwitz1891}. We note that for a permutation $\sigma\in S_d$, we denote the partition denoting its conjugacy class by $C(\sigma)$.

\begin{theorem}
\label{thm-hurw}
    In the same set-up as \cref{def-doublhur}, we call $(\sigma_1,\tau_1,\dots,\tau_r,\sigma_2)$ a factorisation of type $(g,\mu,\nu)$ if
    \begin{itemize}
        \item $\sigma_i,\tau_j\in S_d$,
        \item $C(\sigma_1)=\mu$, $C(\sigma_2)=\nu$,
        \item $C(\tau_i)=(2,1,\dots,1)$, i.e. $\tau_i$ are transpositions,
        \item $\tau_r\cdots\tau_1\sigma_1=\sigma_2$.
    \end{itemize}
    Denoting the set of factorisations of type $(g,\mu,\nu)$ by $\mathcal{F}_g(\mu,\nu)$, we have
    \begin{equation}
        H_g^\bullet(\mu,\nu)=\frac{1}{d!}|\mathcal{F}_g(\mu,\nu)|.
    \end{equation}
    We obtain the connected double Hurwitz numbers by additionally imposing that the subgroup of $S_d$ generated by $\sigma_1,\sigma_2,\tau_1,\dots,\tau_r$ is transitive.
\end{theorem}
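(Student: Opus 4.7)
The plan is to invoke the classical monodromy / Riemann existence theorem dictionary to pass between ramified covers of $\mathbb{P}^1$ and tuples of permutations, and then carry out a short orbit-stabiliser count to account for the weights $1/|\mathrm{Aut}(f)|$.

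First, I would fix a base point $q\in\mathbb{P}^1\setminus\{0,\infty,p_1,\dots,p_r\}$ and a standard geometric basis $(\gamma_0,\delta_1,\dots,\delta_r,\gamma_\infty)$ of $\pi_1(\mathbb{P}^1\setminus\{0,\infty,p_1,\dots,p_r\},q)$, consisting of a small loop around each marked point, subject to the relation $\gamma_\infty\delta_r\cdots\delta_1\gamma_0=1$. Given a Hurwitz cover $f\colon S\to\mathbb{P}^1$ of type $(g,\mu,\nu)$ together with a labelling $\iota\colon f^{-1}(q)\xrightarrow{\sim}\{1,\dots,d\}$, the restriction of $f$ to the punctured locus is an unramified degree-$d$ covering, and the monodromy along these loops produces permutations $\sigma_1,\tau_1,\dots,\tau_r,\sigma_2\in S_d$. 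A standard local computation near each ramification point identifies the cycle type of the monodromy of a small loop with the ramification profile over the enclosed branch point; combined with the $\pi_1$-relation, this yields an element of $\mathcal{F}_g(\mu,\nu)$. Conversely, Riemann existence says that each element of $\mathcal{F}_g(\mu,\nu)$ arises in this way from a labelled Hurwitz cover that is unique up to isomorphism of labelled covers, with the genus of $S$ forced to equal $g$ by Riemann--Hurwitz once the constraint $r=2g-2+\ell(\mu)+\ell(\nu)$ is built in.

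Next I would carry out the count. Fix an isomorphism class $[f]$ and a reference labelling; all other labellings are obtained by post-composing with some $\pi\in S_d$, and two labellings produce isomorphic labelled covers iff they differ by an element of $\mathrm{Aut}(f)$. Crucially, any automorphism $\phi$ of $f$ that fixes the full fibre $f^{-1}(q)$ pointwise is the identity: on each connected component of $S$, $\phi$ is a deck transformation of a connected cover that fixes a point of the unramified locus and hence is trivial by the identity principle. Thus $\mathrm{Aut}(f)$ acts freely on labellings, and each class $[f]$ contributes exactly $d!/|\mathrm{Aut}(f)|$ distinct factorisations. Summing over isomorphism classes yields
\[
|\mathcal{F}_g(\mu,\nu)| \;=\; \sum_{[f]}\frac{d!}{|\mathrm{Aut}(f)|} \;=\; d!\cdot H_g^\bullet(\mu,\nu),
\]
which is the claimed identity after dividing by $d!$. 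The connected statement follows immediately: $S$ is connected if and only if the monodromy acts transitively on $f^{-1}(q)$, equivalently the subgroup $\langle\sigma_1,\sigma_2,\tau_1,\dots,\tau_r\rangle\leq S_d$ is transitive.

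The main obstacle is the clean bookkeeping around three distinct group actions at play: the relabelling action of $S_d$ on labelled covers (which conjugates factorisations), the isomorphism action by $\mathrm{Aut}(f)$ (which one must show is free on labellings), and the ramification profile identification that makes the monodromy tuple land in $\mathcal{F}_g(\mu,\nu)$ rather than merely in a set of factorisations with the correct product. Once these are disentangled, the orbit--stabiliser principle delivers the formula essentially for free; the analytic input beyond combinatorics is concentrated in the Riemann existence theorem, which I would invoke as a black box.
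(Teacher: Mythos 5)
The paper does not prove this statement at all: it is quoted as a classical result and attributed to Hurwitz's 1891 paper, so there is no in-text argument to compare against. Your proof is the standard (and correct) one that underlies that citation — the monodromy/Riemann-existence dictionary between labelled covers and factorisations, the identification of local monodromy cycle type with ramification profile, Riemann--Hurwitz fixing the genus, and the orbit--stabiliser count using that $\mathrm{Aut}(f)$ acts freely on the $d!$ labellings of the fibre, giving $|\mathcal{F}_g(\mu,\nu)|=\sum_{[f]}d!/|\mathrm{Aut}(f)|$.
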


\begin{remark}
    In the literature also relevant for the present work, there are two important variations of this factorisation problem. Expression $\tau_i=(a_i\,b_i)$ with $a_i<b_i$, when we additionally impose $b_i\le b_{i+1}$ we obtain \textbf{(weakly) monotone double Hurwitz numbers}, while for $b_i<b_{i+1}$ we obtain \textbf{strictly monotone double Hurwitz numbers.}
\end{remark}

We now want to package double Hurwitz numbers in a generating function. For this, we consider two infinite sets of variables $\mathbf{t}=(t_1,t_2,\dots),\mathbf{s}=(s_1,s_2,\dots)$. Moreover, we will need the following five families of symmetric functions associated to partions $\lambda$:
\begin{itemize}
    \item $s_\lambda(\mathbf{t})$ the Schur function,
    \item $e_\lambda(\mathbf{t})$ the elementary symmetric function,
    \item $h_\lambda(\mathbf{t})$ the homogeneous symmetric function,
    \item $p_\lambda(\mathbf{t})$ the symmetric power sum,
    \item $m_\lambda(\mathbf{t})$ the monomial symmetric function.
\end{itemize}

We now define a formal variable $\beta$ in the generating function of double Hurwitz numbers,
\begin{equation}    \tau(\mathbf{t},\mathbf{s},\beta)=\sum_{g=0}^\infty\sum_{d=0}^\infty\sum_{\mu,\nu\vdash d}\beta^{2g-2+\ell(\mu)+\ell(\nu)}\frac{H^\bullet_g(\mu,\nu)}{(2g-2+\ell(\mu)+\ell(\nu))!}p_\mu(\mathbf{t})p_\nu(\mathbf{s}).
\end{equation}

It is a general property of generating functions that $\mathrm{log}(\tau(\mathbf{t},\mathbf{s},\beta))$ is the analogous generating function of the connected double Hurwitz numbers $H^\circ_g(\mu,\nu)$.

Recall the definition of the shifted symmetric polynomial $f_2$ evaluated at a partition $\lambda$,
\begin{equation}
    f_2(\lambda)=\frac{1}{2}\sum_i\left[\left(\lambda_i-1+\frac{1}{2}\right)^2-\left(-i+\frac{1}{2}\right)^2\right].
\end{equation}
It is then not difficult to see (see e.g. \cite[section 2]{okounkov2000todaequationshurwitznumbers}) that using Burnside's formula, we obtain
\begin{equation}
\label{equ:okounkov}
        \tau(\mathbf{t},\mathbf{s},\beta)=\sum_{\lambda}e^{\beta f_2(\lambda)}s_\lambda(\mathbf{t})s_\lambda(\mathbf{s}).
\end{equation}

From this expression, Okounkov goes on to derive in \cite{okounkov2000todaequationshurwitznumbers} that $\tau(\mathbf{t},\mathbf{s},\beta)$ is a ($\beta$--deformation of a) $\tau$--function of an integrable hierarchy called the 2D Toda lattice. In fact, it is a member of the important family of \textbf{hypergeometric $\tau$--functions} introduced in \cite{orlov2001hypergeometric}. We give an ad-hoc definition of this family.\footnote{Note that technically, we restrict on the $N=0$ site of the Toda lattice.} Let $\rho\colon\mathbb{Z}\to\mathbb{C}^\ast$ be any map, then we define for any partition $\lambda$ the associated content product
\begin{equation}
    r_\lambda=\prod_{(i,j)\in\lambda} r_{j-i}\quad\textrm{for}\quad  r_i=\frac{\rho_i}{\rho_{i-1}}.
\end{equation}

These content products $r_{\lambda}$ turn out to be eigenvalues of diagonal operators in the fermionic Fock space. It may then be seen (see e.g. \cite{orlov2001hypergeometric}) that the function in the two infinite families of indeterminants $\mathbf{t}=(t_1,t_2,\dots)$ and $\mathbf{s}=(s_1,s_2,\dots)$ defined as
\begin{equation}    \tau_{\rho}(\mathbf{t},\mathbf{s})=\sum_{\lambda}r_\lambda s_\lambda(\mathbf{t})s_{\lambda}(\mathbf{s}),
\end{equation}
is a $\tau$--function of the 2D Toda lattice. These solutions are called \textit{hypergeometric $\tau$--functions} or Orlov--Scherbin $\tau$--functions.

In order to relate this to Hurwitz numbers, we introduce a one paramater family of $\tau$--functions. For this, we start by defining \textbf{weight generating functions} for a tuple of formal variables $\mathbf{c}=(c_1,c_2,\cdots)$

\begin{equation}
    G(z)=\prod_{i=1}^\infty(1+c_iz)\quad\textrm{and}\quad \tilde{G}(z)=\prod_{i=1}^\infty\frac{1}{1-c_iz},
\end{equation}
which we use to define the $\beta$--deformed content products

\begin{equation}
    r_\lambda^{(G,\beta)}=\prod_{(i,j)\in\lambda}G(\beta(j-i))\quad\textrm{and}\quad   r_\lambda^{(\tilde{G},\beta)}=\prod_{(i,j)\in\lambda}\tilde{G}(\beta(j-i)).
\end{equation}
From this, we finally define our $\beta$--deformed $\tau$--functions as

\begin{equation}
    \tau^{(G,\beta)}(\mathbf{t},\mathbf{s})=\sum_{\lambda}r_\lambda^{(G,\beta)}s_\lambda(\mathbf{t})s_\lambda(\mathbf{s})\quad\textrm{and}\quad\tau^{(\tilde{G},\beta)}(\mathbf{t},\mathbf{s})=\sum_{\lambda}r_\lambda^{(\tilde{G},\beta)}s_\lambda(\mathbf{t})s_\lambda(\mathbf{s}).
\end{equation}

We may now expand these $\tau$--functions as

\begin{equation}
        \tau^{(G,\beta)}(\mathbf{t},\mathbf{s})=\sum_{g=0}^\infty\sum_{d=0}^\infty\sum_{\mu,\nu\vdash d}\beta^{2g-2+\ell(\mu)+\ell(\nu)}\frac{H^{G,\bullet}_g(\mu,\nu)}{(2g-2+\ell(\mu)+\ell(\nu))!}p_\mu(\mathbf{t})p_\nu(\mathbf{s})
\end{equation}
where for now the numbers $H^{G,\bullet}_g(\mu,\nu)$ are implicitly defined via this expansion. However, it was shown in \cite{Guay_Paquet_2017} they have an interpretation as enumerative invariants called \textbf{weighted Hurwitz numbers}. There is a topological meaning in the sense of \cref{def-doublhur} as well as one in the setting of factorisations as in \cref{def-doublhur}. Here we only state the interpretation via the symmetric group and refer the interested reader to \cite[Section 1]{Guay_Paquet_2017}. To begin with, we need the following definition.

\begin{definition}
    Consider a tuple of transposition $(\tau_1,\dots,\tau_r)$ with $\tau_i=(a_i\,b_i)$ with $a_i<b_i$. We assign to it a \textbf{signature} $\lambda$. This is a partition of $r$ with parts equal to the number of transpositions $(a_i\,b_i)$ with the same value for $b_i$.

    We define by $m_{\mu\nu}^\lambda$ the number of factorisations $(\tau_1,\dots,\tau_r)$ of type $(g,\mu,\nu)$ with $(\tau_1,\dots,\tau_r)$ of signature $\lambda$ and $b_i\le b_{i+1}$. We call the condition $b_i\le b_{i+1}$ \textbf{monotonicity}.
\end{definition}

Moreover, we define for a partition $\lambda$

\begin{equation}
    G_\lambda(z)=e_\lambda(\mathbf{c})\quad\textrm{and}\quad\tilde{G}_\lambda(z)=h_\lambda(\mathbf{c}).
\end{equation}
Note that for $G(z)=\sum_{n=0}^\infty G_nz^n$ and $\tilde{G}(z)=\sum_{n=0}^\infty\tilde{G}_nz^n$, we have $G_\lambda=\prod G_{\lambda_i}$ and $\tilde{G}_\lambda=\prod \tilde{G}_{\lambda_i}$.

Finally, we obtain the following.

\begin{theorem}[{\cite{Guay_Paquet_2017}}]
    Let $g\ge0$ be a non--negative integer and $d$ a positive integer. Consider two partitions $\mu,\nu$ of $d$ and let $r=2g-2+\ell(\mu)+\ell(\nu)$, then we have
    \begin{equation}
        H_g^{G,\bullet}(\mu,\nu)=\frac{r!}{d!}\sum_{|\lambda|\vdash r}G_\lambda m_{\mu\nu}^\lambda
    \end{equation}
    and the analogous result for $\tilde{G}$.
\end{theorem}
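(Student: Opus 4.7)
The strategy is to pass through the centre of the group algebra $\mathbb{C}[S_d]$ using the Jucys--Murphy calculus. Let $J_k = \sum_{i<k}(i\,k) \in \mathbb{C}[S_d]$ denote the Jucys--Murphy elements and recall the classical fact that any symmetric polynomial $f(J_1, \dots, J_d)$ lies in the centre and acts on the irreducible representation $V_\lambda$ by the scalar $f(c_\square : \square \in \lambda)$, where $c_\square = j-i$ is the content of the box $\square = (i,j)$.

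First, I extract the weighted Hurwitz numbers from the $\tau$--function. Using $s_\lambda(\mathbf t) = \sum_\mu z_\mu^{-1}\chi^\lambda_\mu p_\mu(\mathbf t)$ in the definition of $\tau^{(G,\beta)}$ and comparing coefficients of $p_\mu(\mathbf t)p_\nu(\mathbf s)$ yields
\[
\frac{H_g^{G,\bullet}(\mu,\nu)}{r!} = \frac{1}{z_\mu z_\nu}\,[\beta^r] \sum_{\lambda\vdash d} r_\lambda^{(G,\beta)}\, \chi^\lambda_\mu\, \chi^\lambda_\nu.
\]
To interpret the eigenvalues $r_\lambda^{(G,\beta)}$, set $\Phi_G := \prod_{k=1}^d G(\beta J_k)$. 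This element is a symmetric function of the $J_k$ and so, by the fact above, acts on $V_\lambda$ by $\prod_\square G(\beta c_\square) = r_\lambda^{(G,\beta)}$.

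Next, I expand $\Phi_G$ combinatorially in $\mathbb{C}[S_d]$. Writing $G(\beta J_k) = \sum_{n\ge 0} G_n \beta^n J_k^n$ and $J_k^n = \sum_{a_1,\dots,a_n < k}(a_1\,k)\cdots(a_n\,k)$, the product $\prod_{k=1}^d G(\beta J_k)$ expands as a sum over \emph{monotone} tuples of transpositions $(\tau_1, \dots, \tau_r)$ (i.e.\ $\tau_i = (a_i\,b_i)$ with $a_i<b_i$ and $b_i \le b_{i+1}$), since the $b$--values increase from block $k$ to $k+1$ in the product. If $n_k$ counts the $\tau_i$ with $b_i = k$, then the multiset $(n_k)$ sorts to the signature $\lambda$, and the weight factorises as $\prod_k G_{n_k} = \prod_i G_{\lambda_i} = G_\lambda$. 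Hence
\[
\Phi_G = \sum_{(\tau_1,\dots,\tau_r)\text{ monotone}} \beta^{r}\, G_\lambda\, \tau_1\cdots\tau_r.
\]

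To close the proof, I compute the coefficient of an arbitrary $\sigma_2 \in C_\nu$ in $\Phi_G \cdot \mathcal{C}_\mu$ (with $\mathcal{C}_\mu$ the class sum of $\mu$) in two ways. Using the central idempotent formula $e_\lambda = \frac{\dim V_\lambda}{d!}\sum_\sigma \chi^\lambda(\sigma^{-1})\sigma$ together with the scalar action of $\Phi_G$ on $V_\lambda$, this coefficient equals $\frac{1}{z_\mu}\sum_\lambda r_\lambda^{(G,\beta)}\chi^\lambda_\mu \chi^\lambda_\nu$. Substituting the combinatorial expansion of $\Phi_G$ instead, summing over $\sigma_2 \in C_\nu$, and extracting $[\beta^r]$ recovers $\sum_\lambda G_\lambda m_{\mu\nu}^\lambda$. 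Equating the two expressions and invoking the first identity yields the claimed formula. The $\tilde G$--case is verbatim after replacing $G_n = e_n(\mathbf c)$ by $\tilde G_n = h_n(\mathbf c)$, so that $\tilde G_\lambda = \prod_i h_{\lambda_i}(\mathbf c)$. The main technical obstacle is matching conventions: one must verify that the order of multiplication in $\Phi_G$ produces exactly the monotone factorisations in the sense of Hurwitz's theorem, and that the signature $\lambda$ is read off correctly from the exponent multiset $(n_1, \dots, n_d)$.
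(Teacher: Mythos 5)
The paper does not prove this statement --- it is quoted directly from \cite{Guay_Paquet_2017} --- so there is no internal proof to compare against; measured against the standard argument in that reference, your proposal is correct and is essentially that argument: extract $H_g^{G,\bullet}$ from the Schur expansion via $s_\lambda=\sum_\mu z_\mu^{-1}\chi^\lambda_\mu p_\mu$, realise the content product $r_\lambda^{(G,\beta)}$ as the eigenvalue of the central element $\prod_k G(\beta J_k)$, expand that element into weakly monotone transposition tuples weighted by $G_\lambda$, and compare coefficients on class sums. The one point you rightly flag --- that the paper's convention $\tau_r\cdots\tau_1\sigma_1=\sigma_2$ forces you either to expand $\prod_k G(\beta J_k)$ in decreasing order of $k$ (legitimate, since the $J_k$ commute) or to relabel $\tau_i\mapsto\tau_{r+1-i}$ --- is routine and does not affect the count, so no gap remains.
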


\begin{remark}
    We note that by \cite{Guay_Paquet_2017} we have for $\lambda=(\lambda_1,\dots,\lambda_s)$ a partition of $r$ that
    \begin{equation}
        \widetilde{m}_{\mu\nu}^\lambda=\binom{r}{\lambda_1,\dots,\lambda_s}m_{\mu\nu}^\lambda,
    \end{equation}
    where by $\widetilde{m}_{\mu\nu}$ we denote the number of factorisation in $\mathcal{F}_g(\mu,\nu)$ with signature $\lambda$, i.e. where we drop the monotonicity condition. We claim that $G(z)=e^z$ recovers classical double Hurwitz numbers. This is worked out in detail in \cite{Guay_Paquet_2017}, but we wish to recall a few aspects.

    To begin with $G(z)=e^z$ is an extremal case, since it is a limit of weight generating functions in the sense of 
    \begin{equation}
        e^z=\lim_{n\to\infty}\left(1+\frac{z}{n}\right)^n.
    \end{equation}
    However, it still makes sense to define $G_\lambda$ via the power series expansion and we obtain $G_\lambda=\frac{1}{r!}\binom{r}{\lambda_1,\dots,\lambda_s}$.

    Thus, we obtain 

    \begin{equation}
        H_g^{e^z,\bullet}(\mu,\nu)=\frac{r!}{d!}\sum_{|\lambda|\vdash r}G_\lambda m_{\mu\nu}^\lambda=\frac{r!}{d!}\sum_{|\lambda|\vdash r}\frac{1}{r!}\widetilde{m}_{\mu\nu}^\lambda=\frac{1}{d!}\sum_{|\lambda|\vdash r}\widetilde{m}_{\mu\nu}^\lambda=\frac{1}{d!}|\mathcal{F}_g(\mu,\nu)|=H_g^\bullet(\mu,\nu)
    \end{equation}
    as desired.
\end{remark}

\subsection{Shifted symmetric functions and quasimodular forms}
We briefly introduce the basic notions around shifted symmetric functions and quasimodular forms. Let $\lambda$ be a partition of $d$. We denote by $\chi_\lambda$ the character of the representation of $S_d$ indexed by $\lambda$. For $|\lambda|=|\nu|$, we define the central character of a conjugacy class $C_\nu$ for a partition $\nu$ as
\begin{equation}
   f_\nu(\lambda)=|C_\nu|\frac{\chi^\lambda(C_\nu)}{\mathrm{dim}(\lambda)}, 
\end{equation}
where $|C_\nu|$ denotes the size of the conjugacy class in $S_|\nu|$ and $\mathrm{dim}(\lambda)$ denotes the size of the corresponding representation.

Extending this to the case of $|\lambda|=|\nu|$, we define

\begin{equation}
    f_\nu(\lambda)=\binom{|\lambda|}{|\nu|}|C_\nu|\frac{\chi^\lambda(\nu)}{\mathrm{dim}(\lambda)}.
\end{equation}

Central characters are important examples of the more general notion of \textbf{shifted symmetric functions}. A polynomial in variables $x_1,x_2,x_3,\dots$ is called shifted symmetric if it is symmetric under the permutation $x_i-i\mapsto x_j-j$. Indeed it proved by Okounkov and Olshanski \cite{zbMATH01117961} that $f_\nu$ is a shifted symmetric function in the entries of $\lambda$.

Denoting the algebra of shifted symmetric functions by $\Lambda^\ast$, we note that a distinguished generating set is given by the shifted symmetric power sums

\begin{equation}
    p_i^\ast=\sum_{i=1}^\infty (\lambda_i-i+\frac{1}{2})^k-(-i+\frac{1}{2})^k.
\end{equation}

We obtain a weight grading on $\Lambda^\ast$ by giving $p_i$ weight $i$.

Shifted symmetric functions play an important role in the study of quasimodular forms. The graded algebra of quasimodular forms is given by
$\widetilde{M}=\mathbb{Q}[P,Q,R]$ for 
\begin{equation}
    P=-24G_2,\quad Q=240G4,\quad R=-504 G_6
\end{equation}
in Ramanujan's notation for Eisenstein series

\begin{equation}
    G_k(q)=-\frac{B_k}{2k}+\sum_{r=1}^\infty\sum_{m=1}^\infty m^{k-1}q^mr,
\end{equation}
where $B_k$ denotes the $k$-th Bernoulli number. The grading on $\widetilde{M}$ is obtained by giving $G_k$ weight $k$.

Finally, given a function $f$ on partitions, we associate its $q$--bracket
\begin{equation}
    \langle f\rangle_q=\frac{\sum_\lambda f(\lambda)q^{|\lambda|}}{\sum_\lambda q^{|\lambda|}}\in\mathbb{C}[[q]].
\end{equation}
Then, the celebrated Bloch--Okounkov theorem \cite[Theorem 0.5]{bloch2000character} states that when $f$ is a shifted symmetric polynomial, its $q$--bracket is quasimodular. Moreover, it has weight $k$ as a quasimodular form.

\subsection{(Weighted) elliptic Hurwitz numbers}
\label{sec-weightell}
In this section, we introduce a new notion of weighted elliptic Hurwitz numbers. We again begin with the classical story.

\begin{definition}
\label{def-elliptichur}
    Let $g$ be a non--negative integer, $d$ a positive integer and $\mu^1,\dots,\mu^n$ partitions of $d$. Moreover, let $r=2g-2+nd-\sum_i\ell(\mu^i)$. Let $E$ be a genus $1$ Riemann surface and $p_1,\dots,p_n,q_1,\dots,q_r\in E$ pairwise distinct. For $\underline{\mu}=(\mu^1,\dots,\mu^n)$, we define an \textbf{elliptic Hurwitz cover} of type $(g,\underline{\mu})$ to be a holomorphic map $f\colon S\to E$ with
    \begin{itemize}
        \item $S$ is compact of genus $g$,
        \item $f$ has ramification profile $\mu^i$ over $p_i$,
        \item $f$ has ramification profile $(2,1,\dots,1)$ over $q_j$,
        \item $f$ is unramified else.
    \end{itemize}
    We call two elliptic Hurwitz covers $f_1\colon S_1\to E$ and $f_2\colon S_2\to E$ equivalent if there exists an isomorphism $g\colon S_1\to S_2$ with $f_1=f_2\circ g$.
    Then, we define the associated elliptic Hurwitz number as
    \begin{equation}
        N_{g,d}^\bullet(\underline{\mu})=\sum_{[f]}\frac{1}{|\mathrm{Aut}(f)|},
    \end{equation}
    where the sum is over all equivalence classes of elliptic Hurwitz covers of type $(g,\underline{\mu})$.

    Analogously to before, by imposing $S$ to be connected we obtain the connected numbers $N_{g,d}^\bullet(\mu^1,\dots,\mu^n)$.
\end{definition}

\begin{remark}
\label{rem-elliptic}
\begin{itemize}
    \item We may extend this definition for partitions $\mu^1,\dots,\mu^n$ not necessarily of the same number, by denoting with $N_{g,d}^\bullet(\mu^1,\dots,\mu^n)$ the Hurwitz numbers obtained for the partition $(\mu^i,1^{d-|\mu^i|}$, i.e. by adding $1$s until one reaches degree $d$. Observe that the number of simply ramified points $r$ does not change.
    \item When there are not partitions $\mu^i$, i.e. only simple ramification, we have $r=2g-2$ and we write
    \begin{equation}
        N_{g,d}^\bullet=N_{g,d}^\bullet(\emptyset).
    \end{equation}
\end{itemize}
\end{remark}

Again, there is an interpretation of $N_{g,d}^\bullet(\mu^1,\dots,\mu^n)$ as a factorisation problem in the symmetric group $S_d$. Indeed for the same set-up as in, we call $(\sigma_1,\dots,\sigma_n,\tau_1,\dots,\tau_r,\alpha,\beta)$ an elliptic factorisation of type $(g,\underline{\mu})$ if
\begin{itemize}
    \item $\sigma_i,\tau_j,\alpha,\beta\in S_d$,
    \item $C(\sigma_i)=\mu^i$, $C(\tau_j)=(2,1\dots,1)$,
    \item $\sigma_1\cdots\sigma_n\tau_1\cdots\tau_r=[\alpha,\beta]$, where $[\alpha,\beta]=\alpha\beta\alpha^{-1}\beta^{-1}$ is the commutator.
\end{itemize}

Then, we have

\begin{equation}
    N_{g,d}^\bullet(\underline{\mu})=\frac{1}{d!}|\{(\sigma_1,\dots,\sigma_n,\tau_1,\dots,\tau_r,\alpha,\beta)\,\textrm{of type}\,(g,\underline{\mu})\}|.
\end{equation}

We obtain the connected version when additional imposing that $\langle \sigma_i,\tau_j,\alpha,\beta\rangle$ is a transitive subgroup of $S_d$.

We now aim to define weighted elliptic Hurwitz numbers. We start with the following important notion.

\begin{definition}
    We denote by $M_{\underline{\mu}}^\lambda$ the number of elliptic factorisation $(\sigma_1,\dots,\sigma_n,\tau_1,\dots,\tau_r,\alpha,\beta)$ of type $(g,\underline{\mu})$, such that for $\tau_i=(r_i\, s_i)$ we have $s_i\le s_{i+1}$ and $(\tau_1,\dots,\tau_r)$ has signature $\lambda$.
\end{definition}

As in the double Hurwitz number setting, we consider two weight generating functions $G(z)$ and $\tilde{G}(z)$.

We define \textbf{weighted elliptic Hurwitz numbers} as
\begin{equation}
    N_{g,d}^{G,\bullet}(\underline{\mu})=\frac{r!}{d!}\sum_{\lambda\vdash r}G_\lambda M_{\underline{\mu}}^\lambda\quad\textrm{and}\quad N_{g,d}^{\tilde{G},\bullet}(\underline{\mu})=\frac{r!}{d!}\sum_{\lambda\vdash r}\tilde{G}_\lambda M_{\underline{\mu}}^\lambda.
\end{equation}

\begin{remark}
    We note that by the same arguments as in \cite{Guay_Paquet_2017}, we obtain classical, monotone and strictly monotone elliptic Hurwitz numbers for $G(z)=e^z$, $\tilde{G}(z)=\frac{1}{1-z}$ and $G(z)=1+z$ respectively.
\end{remark}

We now aim to related weighted elliptic Hurwitz numbers to shifted symmetric functions. For this, we first define the Jucys--Murphy elements

\begin{equation}
    J_n=\sum_{i=1}^{n-1} (i\,n)
\end{equation}
in the symmetric group algebra $\mathbb{C}[S_d]$. Recall that $\mathcal{Z}\mathbb{C}[S_d]$ the center of the symmetric group algebra is a vector space with the conjugacy classes
\begin{equation}
    C_\mu=\sum_{C(\sigma)=\mu}\sigma
\end{equation}
as a basis. It is a classical result \cite{JUCYS1974107,MURPHY1981287} that for a symmetric polynomial $f$, we have that $f(J_2,\dots,J_d)\in\mathcal{Z}\mathbb{C}[S_d]$.

Recall that $\mathfrak{K}=\sum_{\alpha,\beta\in S_d}[\alpha,\beta]$ the commutator sum is also an element of $\mathcal{Z}\mathbb{C}[S_d]$. We make the key observation that

\begin{equation}
    M_{\underline{\mu}}^{\lambda}=[\mathrm{id}]C_{\mu^1}\cdots C_{\mu^n}m_\lambda(J_2,\dots,J_d)\mathfrak{K}.
\end{equation}

It is now a standard argument (see e.g. \cite[Proposition 4.1]{Hahn_2022}) to show that

\begin{equation}
     M_{\underline{\mu}}^{\lambda}=d!\sum_{\gamma\vdash d}\prod_{i=1}^n f_{\mu^i}(\gamma)m_{\lambda}(\mathrm{cont}_\gamma),
\end{equation}
where $\mathrm{cont}_\gamma=(j-i)_{(i,j)\in \gamma}$ the usual content. It is well--known that $f_{\mu^i}(\gamma)$ is a shifted symmetric function in $\gamma$. Moreover, for a symmetric function $f$, we have that $f(\mathrm{cont}_\gamma)$ is a shifted symmetric function in $\gamma$ as well, see e.g. \cite{kerov1994polynomial}. 

Therefore, we obtain

\begin{equation}
\label{equ:shiftsymhur}
    N_{g,d}^{G,\bullet}(\underline{\mu})=r!\sum_{\lambda\vdash r}\sum_{\gamma\vdash d} G_\lambda\prod_{i=1}^n f_{\mu^i}(\gamma)m_{\lambda}(\mathrm{cont}_\gamma).
\end{equation}

\section{Bosonification is tropicalisation}
\label{sec-bostrop}
In this section, we derive a tropical interpretation for weighted Hurwitz numbers. We begin by bosonifying a fermionic expression for $H_g^{G,\bullet}$ and then we apply Wick's theorem, which allows to express vacuum expectations in the bosonic Fock space via tropical covers.

Weighted Hurwitz numbers indeed admit an interpretation as vacuum expectations in the fermionic Fock space, see e.g. \cite{Harnad_2015}.
We fix notation
\[
\hat{\gamma}_+(\bt) \defeq \exp\left(\sum_{i=1}^\infty p_i(\mathbf{t}) \frac{\alpha_i}{i} \right), \quad \hat{\gamma}_{-}(\bs) \defeq \exp \left( \sum_{i=1}^\infty p_i(\mathbf{s}) \frac{\alpha_{-i}}{i} \right).
\]

We note that in general (non--deformed) hypergeometric $\tau$--functions are obtained as
\begin{equation}
    \tau_\rho(\mathbf{t},\mathbf{s})=\langle 0\mid\hat{\gamma}_+(\bt)e^{\sum_{i\in\mathbb{Z}}T_iE_{i,i}}\hat{\gamma}_{-}(\bs)\mid0\rangle,
\end{equation}
with $r_i=e^{T_{i-1}-T_i}$ \cite{orlov2001hypergeometric}. In order to obtain weighted Hurwitz numbers, we need to introduce a $\beta$--deformation of the parameters $T_i$ as follows

\begin{equation}
    T_j^{(G,\beta)}=\sum_{i=1}^j\mathrm{log}(G(i\beta))\quad\textrm{and}\quad T_{-j}^{(G,\beta)}=-\sum_{i=0}^{j-1}\mathrm{log}(G(-i\beta)).
\end{equation}

From this, we define the fermionic operator

\begin{equation}
    \hat{C}_\rho=e^{\hat{A}},\quad \hat{A}=\sum_{i\in\mathbb{Z}}T_i^{(G,\beta)}E_{i,i}.
\end{equation}
We then have

\begin{equation}
    \tau^{(G,\beta)}(\mathbf{t},\mathbf{s})=\langle 0\mid\hat{\gamma}_+(\bt)e^{\sum_{i\in\mathbb{Z}}T_i^{(G,\beta)}E_{i,i}}\hat{\gamma}_{-}(\bs)\mid0\rangle
\end{equation}
and thus obtain

\begin{equation}
    H_g^{G,\bullet}(\mu,\nu)=[p_\mu(\mathbf{t})p_\nu(\mathbf{s})\beta^r]\langle 0\mid\hat{\gamma}_+(\bt)e^{\sum_{i\in\mathbb{Z}}T_i^{(G,\beta)}E_{i,i}}\hat{\gamma}_{-}(\bs)\mid0\rangle,
\end{equation}
where $r=2g-2+\ell(\mu)+\ell(\nu)$. Next, we define coefficients $A_i$ implicitly via

\begin{equation}
    \sum_{k=0}^\infty\beta^kA_kz^k=\mathrm{log}(G(\beta z)).
\end{equation}

In \cite[Section 5]{alexandrov2018fermionic}, an expansion of the operator $\hat{A}$ was derived. This expansion was obtained in exactly the operators $\mathcal{G}_{k+1}$ we defined \cref{equ-gviafoper} that were used in \cite{hahn2019tropicaljucyscovers} to study monotone and strictly monotone Hurwitz numbers. More precisely, it was proved that

\begin{equation}
    \hat{A}=\sum_{k=0}^\infty \beta^kA_k\mathcal{G}_{k+1}.
\end{equation}

\begin{remark}
    Essentially, we have that the coefficients $G_i$ define weighted Hurwitz numbers via Jucys--Murphy elements using monomial symmetric functions, while the coefficients $A_i$ describe the same numbers when using the symmetric power sums instead. While tropicalisation via the monomial basis is more straightforward (this was essentially the approach taken in \cite{hahn2019monodromy}), the power sum basis turns out to be more useful to investigate structural properties of the resulting vaccuum expectations from the tropical perspective.
\end{remark}

Putting everything together, we obtain the following lemma.

\begin{lemma}
    We have
    \begin{equation}
        H_g^{G,\bullet}(\mu,\nu)=\frac{1}{\prod_{i=1}^{\ell(\mu)} \mu_i \prod_{i=1}^{\ell(\nu)} \nu_i} \sum_{\lambda \vdash r} \frac{1}{\ell(\lambda)!} \Bigg<\prod_{i=1}^{\ell(\mu)}\alpha_{\mu_i} \prod_{j=1}^{\ell(\lambda)} A_{\lambda_j} \gb_{\lambda_j +1} \prod_{k=1}^{\ell(\nu)} \alpha_{-\nu_k} \Bigg>.
    \end{equation}
\end{lemma}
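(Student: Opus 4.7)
The plan is to expand each of the three blocks appearing in the integrable expression
\[
H_g^{G,\bullet}(\mu,\nu)=[p_\mu(\bt)p_\nu(\bs)\beta^r]\,\dualfock{0}\gammahat\, e^{\hat{A}}\,\gammahatm\fock{0}
\]
independently as formal power series, and then read off the claimed vacuum expectation by extracting the coefficient of $p_\mu(\bt)p_\nu(\bs)\beta^r$. The key pieces of input are the Alexandrov expansion $\hat{A}=\sum_{k\ge0}\beta^k A_k\gb_{k+1}$ already recalled in the text, together with commutativity properties of the $\gb$-operators.

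The first step is to deal with the middle factor $e^{\hat{A}}$. Writing $e^{\hat{A}}=\sum_{n\ge 0}\hat{A}^n/n!$ and substituting the expansion of $\hat{A}$, the operator $\hat{A}^n$ becomes a sum over ordered tuples $(k_1,\dots,k_n)$ of non-negative integers of $\beta^{k_1+\cdots+k_n}\prod_j A_{k_j}\prod_j\gb_{k_j+1}$. Extracting $\beta^r$ restricts to tuples with $k_1+\cdots+k_n=r$. The crucial observation is that each $\gb_{k+1}$ is by construction a linear combination of the operators $\fb_j=\sum_\ell\tfrac{\ell^j}{j!}E_{\ell,\ell}$, and these are diagonal in the partition basis; hence any two $\gb$-operators commute. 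Consequently the product $\prod_j\gb_{k_j+1}$ depends only on the multiset of indices, and the sum over ordered tuples with weight $1/n!$ reorganises into the advertised sum over partitions $\lambda\vdash r$ with weight $1/\ell(\lambda)!$ (the convention being that the compositions of $r$ obtained by reordering the parts of $\lambda$ are the objects indexed by $\lambda\vdash r$ here).

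The second step is to extract $p_\mu(\bt)p_\nu(\bs)$. Using $\gammahat=\exp\!\bigl(\sum_{i\ge1}p_i(\bt)\alpha_i/i\bigr)$ and the commutativity $[\alpha_i,\alpha_j]=i\delta_{i+j,0}=0$ for $i,j>0$, the exponential factorises into commuting one-variable exponentials, and a direct expansion shows that the coefficient of $\prod_i p_i(\bt)^{m_i(\mu)}$ is $\tfrac{1}{\prod_j\mu_j}\prod_j\alpha_{\mu_j}$, up to the symmetry factor absorbed into the normalisation of $p_\mu$ in the generating function. The analogous computation on the right turns $\gammahatm$ into $\tfrac{1}{\prod_k\nu_k}\prod_k\alpha_{-\nu_k}$. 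Assembling the three pieces inside the vacuum expectation, with the positive modes on the left and the negative modes on the right of the $\gb$-block, yields precisely the right-hand side of the lemma.

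The main obstacle is purely combinatorial bookkeeping: one has to verify that the factor $1/\ell(\lambda)!$ emerges correctly from grouping compositions into partitions (so that each composition contributes the single $1/n!$ from the exponential series), and that this is compatible with the $z_\mu, z_\nu$ normalisation factors that come out of the power-sum coefficient extraction from $\gammahat$ and $\gammahatm$. No further operator-algebraic input is required beyond the commutativity of the $\fb$-operators, so the conceptual content is that all of the non-trivial operator data lives inside the diagonal sub-algebra generated by the $E_{\ell,\ell}$.
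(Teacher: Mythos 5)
Your proposal is correct and follows exactly the derivation the paper intends (the paper itself gives no explicit proof beyond ``putting everything together''): expand $e^{\hat{A}}$ using $\hat{A}=\sum_k\beta^kA_k\gb_{k+1}$, use that the $\gb$-operators are diagonal and hence commute to reorganise the sum over ordered tuples into the sum over $\lambda\vdash r$ with the $1/\ell(\lambda)!$ weight, and extract $p_\mu(\mathbf{t})p_\nu(\mathbf{s})$ from $\hat{\gamma}_\pm$ to produce the $\alpha$-insertions and the $1/\prod\mu_i\prod\nu_j$ prefactor. You also correctly flag the two notational subtleties (compositions versus unordered partitions, and the $|\aut(\mu)|$-type symmetry factors) that the paper glosses over.
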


We then proceed to write this in a formulation so that our vacuum expectation consists solely of $\alpha$ operators.
\begin{proposition}\label{actuallybosonic}
\begin{align}
H_g^{G,\bullet}(\mu, \nu) =&  \frac{1}{\prod_{i=1}^{\ell(\mu)} \mu_i \prod_{i=1}^{\ell(\nu)} \nu_i} \sum_{\lambda \vdash r} \frac{1}{\ell(\lambda)!}  \\
& \times \prod_{j=1}^{\ell(\lambda)}\Bigg[ A_{\lambda_j}(\lambda_j )!  \sum_{\substack{g_1^j,g_2^j = 0 \\ \mathbf{x}^j \in S\Z^{l+2-2g_1^j-2g_2^j}}}  \Big< \tau_{2g_2^j -2}(\omega) \Big>_{g_2^j}^{\rsph, \circ} \Big< \mathbf{x}^{j,+} \mid \tau_{2g_1^j -2+ \ell(\mathbf{x}^{j,+}) + \ell(\mathbf{x}^{j,-})}(\omega) \mid \mathbf{x}^{j,-}   \Big>_{g_1^j}^{\rsph, \circ} \Bigg]\\
& \times \Bigg<\prod_{i=1}^{\ell(\mu)}\alpha_{\mu_i} \prod_{j=1}^{\ell(\lambda)} \Bigg(\prod_{0 > x_i^j \in \mathbf{x}^j} \alpha_{x_i^j}  \prod_{0 < x_k^j \in \mathbf{x}^j} \alpha_{x_k^j} \Bigg)\prod_{k=1}^{\ell(\nu)} \alpha_{-\nu_k} \Bigg>.
\end{align}
\end{proposition}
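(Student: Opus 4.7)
The plan is essentially a direct substitution: the previous lemma already gives the expression
\[
H_g^{G,\bullet}(\mu,\nu)=\frac{1}{\prod \mu_i \prod \nu_k} \sum_{\lambda \vdash r} \frac{1}{\ell(\lambda)!} \Bigg<\prod_{i=1}^{\ell(\mu)}\alpha_{\mu_i} \prod_{j=1}^{\ell(\lambda)} A_{\lambda_j} \gb_{\lambda_j +1} \prod_{k=1}^{\ell(\nu)} \alpha_{-\nu_k} \Bigg>,
\]
so the only task left is to expand each $\gb_{\lambda_j+1}$ using \cref{gbasgromovwitten} and to argue that the ensuing rearrangements are legitimate.

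The first step is to apply \cref{gbasgromovwitten} to each factor $\gb_{\lambda_j+1}$ appearing in the inner product. This gives, for each $j=1,\ldots,\ell(\lambda)$, a representation of $\gb_{\lambda_j+1}$ as a (possibly infinite) sum over pairs of non-negative integers $g_1^j,g_2^j$ and tuples $\mathbf x^j\in S\mathbb Z^{\lambda_j+2-2g_1^j-2g_2^j}$, with scalar coefficient $\lambda_j!\bigl<\tau_{2g_2^j-2}(\omega)\bigr>_{g_2^j}^{\rsph,\circ}\bigl<\mathbf x^{j,+}\mid\tau_{2g_1^j-2+\ell(\mathbf x^{j,+})+\ell(\mathbf x^{j,-})}(\omega)\mid\mathbf x^{j,-}\bigr>_{g_1^j}^{\rsph,\circ}$ multiplying a monomial $\prod_{0>x_i^j}\alpha_{x_i^j}\prod_{0<x_k^j}\alpha_{x_k^j}$. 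We do this independently for each $j$, so that the ambient index set is a collection of independent summation indices $\{(g_1^j,g_2^j,\mathbf x^j)\}_{j=1}^{\ell(\lambda)}$.

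The second step is to use multilinearity of the vacuum expectation to pull out all of the Gromov--Witten coefficients: these are scalars, hence they commute past the bracket and past every $\alpha$. Multiplying the combinatorial prefactors $\lambda_j!$ with the $A_{\lambda_j}$ from the previous lemma gives the factor $A_{\lambda_j}(\lambda_j)!$ appearing inside the product over $j$ in the claimed formula. What remains inside the vacuum expectation is exactly the ordered product $\prod_{i=1}^{\ell(\mu)}\alpha_{\mu_i}\prod_{j=1}^{\ell(\lambda)}\bigl(\prod_{0>x_i^j\in\mathbf x^j}\alpha_{x_i^j}\prod_{0<x_k^j\in\mathbf x^j}\alpha_{x_k^j}\bigr)\prod_{k=1}^{\ell(\nu)}\alpha_{-\nu_k}$, with the ordering of the $\gb$-factors inherited by the corresponding blocks of $\alpha$ operators.

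There is essentially no hard step here; the only point requiring care is the bookkeeping of the order in which the blocks of $\alpha$ operators appear, since the $\gb$-factors do not in general commute with each other, and the expansion of $\gb_{\lambda_j+1}$ from \cref{gbasgromovwitten} must replace $\gb_{\lambda_j+1}$ \emph{in place}, preserving the left-to-right order indexed by $j$. Once one is careful to index the auxiliary data $(g_1^j,g_2^j,\mathbf x^j)$ by $j$ and to keep the corresponding blocks in the same position in the product, the identity in the proposition drops out by simple term-by-term comparison with the expansion one obtains from the previous lemma. In particular, no convergence subtleties arise because for fixed $\mu,\nu,g$ only finitely many terms contribute: the total energy constraint forces $\sum_j|\mathbf x^j|$ to match $|\mu|+|\nu|$ after pairing, and the genus constraint bounds the $g_i^j$.
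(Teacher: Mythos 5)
Your proposal is correct and matches the derivation the paper intends: the proposition follows from the preceding lemma by substituting the expansion of each $\mathcal{G}_{\lambda_j+1}$ from \cref{gbasgromovwitten} and pulling the scalar Gromov--Witten coefficients out of the vacuum expectation by linearity, exactly as you describe. The paper in fact gives no explicit proof of this proposition, treating it as the immediate consequence of that substitution, so your write-up (including the remark on preserving the left-to-right order of the $\alpha$-blocks and the finiteness of the sum) supplies precisely the intended argument.
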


\begin{example}[Completed cycle Hurwitz numbers]
\label{ex-compltcycles}
   We come back to the important case of completed cycles Hurwitz numbers. As noted in \cite{Bychkov_2024}, and with appropriate modification for notational change, the weight generating function is
    \[
    G(z)=\exp\left(\mathcal{S}( \partial_z) \frac{z^r}{r!}\right).
    \]
    
    In the Fock space formalism the $(r+1)$ completed cycles Hurwitz numbers are defined as
    \[
    H^{(r)}_g(\mu, \nu) = \frac{1}{\prod\mu_i\prod\nu_j}\left<  \prod_{i=1}^{\ell(\mu)} \alpha_{\mu_i} \fb_{r+1}^s \prod_{j=1}^{\ell(\nu)} \alpha_{-\nu_j}\right>.
    \]

    This expression is purely in terms of the operators $\fb_r$ instead of the operators $\gb_r$. Essentially, the weight generating function determines a change of bases between these two families of operators. To see this, recall from \cref{equ-gviafoper} that $\mathcal{G}_{l+1}=l!\sum_{k\ge0}c_{2k-1}\mathcal{F}_{l-(2k-1)}$, where $c_{2k-1}$ are the coefficients of the expansion of $\varsigma(z)$. Thus, we have $\mathcal{G}_{l+1}=l!\left(\fb_{l+1}+c_1\fb_{l-1}+\dots\right)$. Since $\frac{\mathcal{S}(z)}{\varsigma(z)}=1/z$ the above choice of weight generating function causes all terms except $\fb_{l+1}$ to cancel.
\end{example}

\begin{definition}
    We define $\Gamma(\mathbb{P}^1_{trop}; r, \mu, \nu)$, the set of admissible tropical covers with at most $r$ vertices with ends determined by $\mu$ and $\nu$, both partitions of some positive integer $d$. This is the set of tropical covers $\pi : \Gamma \to \mathbb{P}^1_{trop}$, with the genus $g$ defined by the Riemann Hurwitz condition $2-2g = \ell(\mu) + \ell(\nu) - r$, where we fix $r$ points $p_1, \dots, p_r$ on the codomain $\mathbb{P}^1_{trop}$ such that:

\begin{enumerate}
\item The unbounded left pointing ends of $\Gamma$ have weights given by $\mu$, and the unbounded right pointing ends have weights given by $\nu$.
\item There exists some $l \leq r$, that is the number of vertices of $\Gamma$. We denote $V(\Gamma) = \{ v_1, \dots, v_l\}$ as the set of vertices. Thus, $\pi(v_i) = p_i$ and we let $w_i = \operatorname{val}(v_i)$ be the corresponding valences.
\item To each vertex $v_i$ of $\Gamma$ we have two associated integers $(g_1^i,g_2^i) \in \Z_{\geq 0}^2$, for $i = 1 , \dots, , l $, where we then require $g(v_i) = g_1^i + g_2^i$ as the genus at $v_i$, and with the following Euler characteristic requirement,
\[
\beta^1(\Gamma) + \sum_{i = 1}^l g(v_i) = g.
\]
\item We construct a partition $\lambda$ of length $l$, such that $\lambda_i = \operatorname{val}(v_i) + 2g(v_i) -2$, and require the Riemann Hurwitz condition;
\[
\sum_{i=1}^l \lambda_i = 2g-2 + \ell(\mu) + \ell(\nu).
\]
\end{enumerate}

We can further attach $G$-weighted vertex multiplicities $m_{G,v_i}$ for vertices in these graphs, let $\mathbf{x}^+$ be the right hand side weights, and $\mathbf{x}^-$ the left hand side weights, We define the multiplicity $m_{G, v_i}$ of $v_i$ as
 \[
 m_{G, v_i}  =  \lambda_i! | \hspace{0em}\aut(\mathbf{x}^+) \hspace{0em}| | \hspace{0em} \aut(\mathbf{x}^-) \hspace{0em}| A_{\lambda_i} \Big< \tau_{2g_2^i -2}(\omega)  \Big>_{g_2^i}^{\rsph, \circ} \Big< \mathbf{x}^{+}  \mid \tau_{2g_1^i -2+ \ell(\mathbf{x}^{+}) + \ell(\mathbf{x}^{-})}(\omega) \hspace{0em}\mid \hspace{0em} \mathbf{x}^{-}  \hspace{0em} \Big>_{g_1^i}^{\rsph, \circ} ,
 \]
 with the factor $A_{\lambda_i}$ coming from the expansion $\log(G(\beta x) )= \sum_{k=0}^{\infty}  A_k (\beta x)^{k}$.
\end{definition}


\begin{theorem}[Tropical weighted double Hurwitz numbers]\label{thm-tropcorr}
For a weight generating function $G(x)$ (or $\tilde{G}$) and two partitions of some fixed positive integer $ \mu, \nu \vdash d$, and $g$ a non--negative integer. Let $r=2g-2+\ell(\mu)+\ell(\nu)$

\[
H_g^{G,\circ}(\mu, \nu) = \sum_{\pi \in \Gamma(\mathbb{P}^1_{trop}; r, \mu, \nu)} \frac{1}{
|\aut (\pi)|} \frac{1}{\ell(\lambda)!} \prod_{v \in V(\Gamma)} m_{G,v} \prod_{e \in E^0(\Gamma)} \omega_e
\]
for $\Gamma(\mathbb{P}^1_{trop}; r, \mu, \nu)$ the set of admissible tropical covers $\pi : \Gamma \to \mathbb{P}^1_{trop}$, and $\lambda$ the corresponding partition for each given cover.
\end{theorem}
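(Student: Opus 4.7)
The plan is to start from the fully bosonic expression of Proposition \ref{actuallybosonic} and apply Wick's theorem to turn the resulting vacuum expectation of $\alpha$-operators into a sum over combinatorial matchings, which we then identify with tropical covers. Concretely, Proposition \ref{actuallybosonic} rewrites $H_g^{G,\bullet}(\mu,\nu)$ as a sum indexed by partitions $\lambda\vdash r$ of a product of Gromov--Witten coefficients times a vacuum expectation of the form
\begin{equation}
\Bigl\langle \prod_{i=1}^{\ell(\mu)}\alpha_{\mu_i}\;\prod_{j=1}^{\ell(\lambda)}\Bigl(\prod_{x\in\mathbf{x}^{j,-}}\alpha_{x}\prod_{y\in\mathbf{x}^{j,+}}\alpha_{y}\Bigr)\prod_{k=1}^{\ell(\nu)}\alpha_{-\nu_k}\Bigr\rangle.
\end{equation}
Using $[\alpha_a,\alpha_b]=a\,\delta_{a+b,0}$ together with $\alpha_{a>0}|0\rangle=0=\langle 0|\alpha_{a<0}$, this vacuum expectation is computed by the standard bosonic Wick calculus: it equals the sum, over all complete matchings of creation with annihilation operators that respect the left-to-right ordering imposed by the operator positions, of the product of the contraction values $a$ from $[\alpha_a,\alpha_{-a}]=a$.

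The dictionary with tropical covers is the step I would lay out next. Each bosonic operator becomes a germ of an end/edge at a point of $\mathbb{P}^1_{\mathrm{trop}}$: the left-most operators $\alpha_{\mu_i}$ produce the left ends of weights $\mu_i$, the right-most $\alpha_{-\nu_k}$ produce the right ends of weights $\nu_k$, and each $\lambda$-block yields a vertex $v_j$ at the $j$-th marked point $p_j$ with incoming half-edges $\mathbf{x}^{j,-}$ and outgoing half-edges $\mathbf{x}^{j,+}$. A Wick matching of an out-half-edge at $v_j$ with an in-half-edge at $v_{j'>j}$ (or with a right end) corresponds to a bounded edge $e$ of the source graph $\Gamma$ of weight $\omega_e$ equal to the common index, contributing the factor $\omega_e$ -- these are exactly the edge weights in the statement. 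Conditions (1)--(4) of admissibility drop out automatically: the end data match $\mu,\nu$, the orientation ensures the cover is surjective onto $\mathbb{P}^1_{\mathrm{trop}}$, the genus at each $v_j$ is the sum $g_1^j+g_2^j$ prescribed by Proposition \ref{actuallybosonic}, and the valence $\lambda_j=\operatorname{val}(v_j)+2g(v_j)-2$ matches the parameter in the summation. Collecting the $\lambda_i!\,A_{\lambda_i}$ prefactor with the Gromov--Witten brackets from Lemma \ref{gbasgromovwitten} and Theorem \ref{polynomialgw} reproduces exactly the multiplicity $m_{G,v}$; the $|\aut(\mathbf{x}^{\pm})|$ terms arise because Proposition \ref{actuallybosonic} sums over ordered tuples $\mathbf{x}^j\in S\mathbb{Z}^{\cdot}$ while the tropical cover only remembers them as multisets.

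After this, the key bookkeeping step is to pass from the unordered sum over matchings to a sum over isomorphism classes of tropical covers $\pi\colon\Gamma\to\mathbb{P}^1_{\mathrm{trop}}$; the combinatorial factor $|\aut(\pi)|$ in the denominator precisely accounts for the number of Wick matchings producing the same cover, and the $1/\ell(\lambda)!$ in the statement is the leftover from the $1/\ell(\lambda)!$ in Proposition \ref{actuallybosonic} after one groups vertices of equal type (this is analogous to the treatment in \cite{cavalieri2010tropical,hahn2019tropicaljucyscovers}). Finally, one passes from $H_g^{G,\bullet}$ to $H_g^{G,\circ}$: the bosonic Wick expansion produces both connected and disconnected source graphs, and the exponential formula for generating functions (equivalently, the fact that $\log\tau^{(G,\beta)}$ is the generating function of connected weighted Hurwitz numbers, which is established in the preliminaries) implies that restricting to connected tropical covers yields $H_g^{G,\circ}(\mu,\nu)$, as desired.

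The main obstacle I anticipate is purely bookkeeping: reconciling the factorials, the automorphism groups $|\aut(\mathbf{x}^{\pm})|$ coming from the power-sum expansion, and the symmetry factor $|\aut(\pi)|$ of the tropical cover, so that the combinatorial identities balance on the nose. The analytic input (Wick's theorem, the $\mathcal{G}_{k+1}$-expansion of Lemma \ref{gbasgromovwitten}, and the closed form of Theorem \ref{polynomialgw}) is already in place; the argument ultimately reduces to the same type of orbit--stabiliser accounting used in \cite{hahn2019tropicaljucyscovers}, and the proof of the elliptic analogue \cref{thm-ellcorr} should proceed by the same Wick-theorem reduction applied to an appropriate trace rather than vacuum expectation.
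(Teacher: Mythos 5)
Your proposal follows essentially the same route as the paper: both start from Proposition \ref{actuallybosonic}, apply Wick's theorem (the paper cites the formulation of \cite{hahn2019tropicaljucyscovers}) to reduce the $\alpha$-operator vacuum expectation to a sum over tropical covers, assign genus $g(v_i)=g_1^i+g_2^i$ to vertices and verify the admissibility conditions, and then absorb the $\lambda_i!\,A_{\lambda_i}$ factors and Gromov--Witten brackets into the local multiplicities $m_{G,v}$ while cancelling $\prod\mu_i\prod\nu_j$ against the end weights to restrict to $E^0(\Gamma)$. The argument is correct and matches the paper's proof in both strategy and bookkeeping.
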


\begin{proof}
From any summand of our bosonic expression (c.f. \Cref{actuallybosonic}, where here we are solely looking at the vacuum expectation and not prefactors, we look at the connected case here but its exactly analogous so long as we consider connected vacuum expectations) over $\lambda_i$ we proceed. We have $g_1^i, g_2^i \in \Z_{\geq 0}$, and $\mathbf{x}^i \in S\Z^{\lambda_i + 2 -2g_1 - 2g_2}$ with $x_1^i \leq \cdots \leq x_{l_i}^i < 0 < x_{l_i +1}^i \leq \cdots \leq x_{\lambda_i + 2 - 2g_1^i -2g_2^i}$ for some $l_i \in [\lambda_i + 2 -2g_1^i - 2g_2^i]$ for all $i \in [\ell(\lambda)]$. With this, we apply Wick's Theorem in the formulation of \cite[Proposition 2.1]{hahn2019tropicaljucyscovers} to the vacuum expectation. This will give that 
\[
\Bigg<\prod_{i=1}^{\ell(\mu)}\alpha_{\mu_i} \prod_{i=1}^{\ell(\lambda)}  \alpha_{x_1^i} \dots \alpha_{x^i_{\lambda_i + 2 - 2g_1^i - 2g_2^i}}\prod_{k=1}^{\ell(\nu)} \alpha_{-\nu_k}\hspace{-.2em} \Bigg>^{\hspace{-.3em}\circ} \hspace{-.2em}= \hspace{-.5em}\sum_{\pi \in \Gamma(\mathbb{P}^1_{trop}, 0; \mu, \nu)} \hspace{-.3em}\frac{1}{|\aut(\pi)| }\prod_{i=1}^{\ell(\lambda)} | \aut (\mathbf{x}^{i,+})| | \aut (\mathbf{x}^{i,-})| \hspace{-.6em}\prod_{e \in E(\Gamma)} \hspace{-.5em} \omega_{e}.
\]
These covers will immediately satisfy (a) and (b). Note however that the genus condition we require will not be satisfied, all of the tropical covers here will be explicit. Thus, we need to attach further genera to our points to construct covers to satisfy the remainder of the conditions.

For this, consider some $\lambda \vdash r$, and some fixed data $g_1^i, g_2^i$ for $i =1, \dots ,\ell(\lambda)$. To resolve our issues, we set $g(v_i) = g_1^i + g_2^i$. This is clearly a bijection, and it also preserves automorphisms. We just need to check that this gives a genus we actually desire, that is, we verify $\beta^1(\Gamma) + \sum_{i=1}^l g(v_i) = g$. The Euler characteristic for a graph is $|V| - |E| = 1 - \beta^1(\Gamma)$ in terms of $|V|$ the number of vertices, and $|E|$ the number of edges. Note that $|V|  = \ell(\mu) + \ell(\nu) + \ell(\lambda)$. To compute $|E|$, the handshake lemma can be used to see
We then substitute $|\lambda| = 2g-2 + \ell(\mu) + \ell(\nu)$ for
\[
|E| = \frac 12  \left( 2g-2 + 2 
   \ell(\lambda) -2 \sum g(v_i) + 2 \ell(\mu) + 2 \ell(\nu)\right) = g-1 +  \ell(\mu) +  \ell(\nu) +  
   \ell(\lambda) - \sum g(v_i)  .
\]
Then substituting into the Euler characteristic requirement, we immediately get by cancellation that
\[
1- \beta^1(\Gamma) = 1 - g + \sum_{i=1}^l g(v_i).
\]
This yields $g = \beta^1 (\Gamma) + \sum_{i=1}^l (g_1^i + g_2^i)$ as required. These new tropical covers then satisfy (c) and (d) by construction.

All that remains to do is to consider the prefactors as global or local multiplicities. The factor that remains to be dealt with is 
\[
\frac{1}{\prod_{i=1}^{\ell(\mu)} \mu_i \prod_{i=1}^{\ell(\nu)} \nu_i}  \frac{1}{\ell(\lambda)!}    \prod_{j=1}^{\ell(\lambda)}\Bigg[ A_{\lambda_j}\lambda_j!  \Big< \tau_{2g_2^j -2}(\omega) \Big>_{g_2^j}^{\rsph, \circ} \Big< \mathbf{x}^{j,+} \hspace{0em}\mid \hspace{0em} \tau_{2g_1^j -2+ \ell(\mathbf{x}^{j,+}) + \ell(\mathbf{x}^{j,-})}(\omega)\hspace{0em} \mid \hspace{0em} \mathbf{x}^{j,-}   \hspace{0em}\Big>_{g_1^j}^{\rsph, \circ} \Bigg],
\]
while our current weighting is 
\[
\sum_{\pi \in \Gamma(\mathbb{P}^1_{trop};r, \mu, \nu)} \frac{1}{|\aut(\pi)| }\prod_{i=1}^{\ell(\lambda)} | \aut (\mathbf{x}^{i,+})| | \aut (\mathbf{x}^{i,-})| \prod_{e \in E(\Gamma)} \omega_{e}.
\]
With this, if we set the multiplicity of the $i$-th vertex as in requirement (e) and weight each cover by 
\[
\frac{1}{\prod\mu_i \prod \nu_j} \frac{1}{\ell(\lambda)!} \frac{1}{|\aut(\pi)|} \prod_{e \in E(\Gamma)} \omega_e \prod m_{G,v_i},
\]
we obtain the correct contribution. To conclude, note the weights of the unbounded edges of each cover contribute a factor of $\prod{\mu_i} \prod \nu_j$ in the product, if we take the product weights over all interior edges instead, this cancels to yields the desired
\[
\frac{1}{\ell(\lambda)!} \frac{1}{|\aut(\pi)|} \prod_{e \in E^0(\Gamma)} \omega_e \prod m_{G,v_i},
\]
 completing the proof.   
\end{proof}

\begin{example}
   We can recover the tropical Jucys covers cases (monotone and strictly monotone Hurwitz numbers) treated in \cite{hahn2019tropicaljucyscovers}. Here, the weight generating functions are $\frac{1}{1-z}$ and $1+z$ for the monotone and strictly monotone case respectively. For the strictly monotone case, the expansion $\log\left(\frac{1}{1-\beta x}\right) = \beta x + \frac{(\beta x)^2}{2} + \frac{(\beta x)^3}{3} + \cdots $ gives weights $A_i = \frac{1}{i}$. This coincides with the tropical interpretation in \textit{loc.cit.}, where the operators $\gb_{r+1}$ are instead defined as $(r-1)!\sum c_{2k-1} \fb_{r-(2k-1)}$, which would equal $A_{r} \gb_{r+1}$ in our notation with the given weighting function. The strictly monotone case has signs arising from the Taylor expansion $\log(1 + \beta x ) = \sum_{n=1}^\infty (-1)^{n+1} \frac{(\beta x)^n}{n} $, agreeing with the signs arising from valencies present there.
\end{example}

\section{Applications to polynomiality}
\label{sec-poly}
Building on previous work of Cavalieri, Johnson, and Markwig in \cite{cavalieri2010tropical,cavalieri2010chamberstructuredoublehurwitz}, we study the polynomiality of weighted double Hurwitz numbers. We will focus on the case for a weight generating function $G$, but note that the same arguments work for $\tilde{G}$.

As it will be more convenient to work with the number of simply ramified points $r$ instead of the genus, we introduce the following notation:

For any positive integer $n$ and $\mathbf x \in \Z^n$ such that $\sum_{i=0}^n x_i = 0$, we denote $H_G^r(\mathbf x)$ as the weighted double Hurwitz number $H_g^{G,\circ}(\mathbf{x}^+,\mathbf{x}^-)$ with $\mathbf{x}^+ = \{ x_i \mid x_i > 0\}$ and $\mathbf{x}^- = \{ x_i \mid x_i < 0\}$.

\subsection{Piecewise polynomiality}

\begin{definition}[Resonance Arrangement]\label{resonancearrangement}
The resonance arrangement is the hyperplane arrangement in $\R^n$ given by
\[
W_I = \left\{ \mathbf{x} \in \Z^n \mid \sum_{i \in I} x_i = 0\right\}
\]
for all $I \subsetneq \{ 1, \dots , n \}$. The connected components of the complement of the resonance arrangement are called chambers, which we denote $\mathfrak c$.
\end{definition}

\begin{theorem}[Piecewise Polynomiality for weighted double Hurwitz numbers]\label{thm-piecewisepoly}

Let $r$ be a non-negative integer, and take some fixed length $n$ of $\mathbf x$. Take some arbitrary weight generating function $G$ with a product decomposition. Then the function;
\[
H_G^r(\mathbf x) : \left\{\sum_{i=0}^n x_i = 0 \right\} \subseteq \Z^n \to \Q
\]
is a polynomial of degree $4g-3+n$ in $\mathbf{x}$ in each chamber of resonance $\mathfrak{c}$ of the complement of the resonance arrangement.
\end{theorem}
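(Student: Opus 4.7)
The plan is to apply the tropical correspondence theorem (\cref{thm-tropcorr}) and analyse the resulting sum chamber by chamber. First, I would group the tropical covers in $\Gamma(\mathbb{P}^1_{\textit{trop}};r,\mu,\nu)$ by their \emph{combinatorial type}, i.e.\ the isomorphism class of the underlying labelled graph together with all vertex genera and integer edge weights, with edge lengths left free. By the balancing condition, the weight $\omega_e$ of every bounded edge $e$ is, up to sign, a partial sum $\sum_{i\in I_e}x_i$ for some $I_e\subset\{1,\dots,n\}$. Since the sign of every such partial sum is constant on each chamber $\mathfrak{c}$ of the resonance arrangement, the finite set of combinatorial types contributing admissibly to $H_G^r(\mathbf{x})$ does not change as $\mathbf{x}$ varies in $\mathfrak{c}$.

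Next, for each fixed combinatorial type I would show that its contribution is polynomial in $\mathbf{x}$. The edge product $\prod_{e\in E^0(\Gamma)}\omega_e$ is a product of linear forms in $\mathbf{x}$ and is therefore polynomial of degree $|E^0(\Gamma)|$. For the vertex multiplicity $m_{G,v}$, the constant $A_{\lambda_v}$ and the Gromov--Witten factor $\langle\tau_{2g_2^v-2}(\omega)\rangle_{g_2^v}^{\rsph,\circ}$ are independent of $\mathbf{x}$, while by \cref{polynomialgw} the remaining factor
\[
[z^{2g_1^v}]\frac{\prod_{x\in\mathbf{x}^{+,v}\cup\mathbf{x}^{-,v}}\mathcal{S}(xz)}{\mathcal{S}(z)}
\]
is a polynomial in the incident edge weights of degree at most $2g_1^v$, and hence polynomial in $\mathbf{x}$. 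Summing over the finitely many combinatorial types admissible in $\mathfrak{c}$ then produces a polynomial on $\mathfrak{c}$.

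For the degree statement I would carry out the following elementary accounting. Using the identity $\sum_v\lambda_v=\sum_v(\mathrm{val}(v)+2g(v)-2)=r=2g-2+n$ together with $|V^0(\Gamma)|-|E^0(\Gamma)|=1-g+\sum_vg(v)$ and $g(v)=g_1^v+g_2^v$, the total degree bound per combinatorial type rewrites as
\[
|E^0(\Gamma)|+2\sum_vg_1^v \;=\; |V^0(\Gamma)|-1+g+\sum_vg_1^v-\sum_vg_2^v.
\]
Since each $\lambda_v\ge 1$, we have $|V^0(\Gamma)|\le r=2g-2+n$, and since $\sum_vg(v)\le g$ we also have $\sum_vg_1^v\le g-\sum_vg_2^v$. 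Substituting yields the upper bound $4g-3+n-2\sum_vg_2^v\le 4g-3+n$. Equality is achieved on combinatorial types that are trees with every $\lambda_v=1$ and $\sum_vg_1^v=g$, whose existence in any chamber $\mathfrak{c}$ can be verified directly.

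The main obstacle I expect is precisely this degree bookkeeping. In the classical Cavalieri--Johnson--Markwig setting \cite{cavalieri2010chamberstructuredoublehurwitz} all vertex multiplicities are constants in $\mathbf{x}$, so the degree is carried entirely by the bounded edges; here, the weighted vertex multiplicities contribute nontrivial polynomial factors whose degree must be traded off against the reduced number of bounded edges at higher-valence or higher-genus vertices. The top-degree contributions turn out to come from unusual vertex shapes such as $\mathrm{val}(v)=1,\,g(v)=1$, and one must verify that these are always admissible for generic $\mathbf{x}\in\mathfrak{c}$ and that no conspiratorial cancellation lowers the degree below $4g-3+n$ across the sum over combinatorial types.
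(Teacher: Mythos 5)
The central step of your argument fails for source graphs with cycles, and this is precisely where the real content of the theorem lies. You assert that ``by the balancing condition, the weight $\omega_e$ of every bounded edge $e$ is, up to sign, a partial sum $\sum_{i\in I_e}x_i$.'' This is only true when $\Gamma$ is a tree. For a cover of genus $g$ with $b^1(\Gamma)>0$, the balancing conditions form an underdetermined linear system: the set of admissible integer edge weights is the set of lattice points of a $b^1(\Gamma)$-dimensional polytope (the bounded chambers of the arrangement of coordinate hyperplanes in the flow space $F_\Gamma(\mathbf{x})=d^{-1}(\mathbf{x},0)$, in the paper's notation). Consequently your grouping is inconsistent: if you include the integer edge weights in the ``combinatorial type,'' then the set of contributing types does change as $\mathbf{x}$ moves within a chamber (the polytope acquires more lattice points); if you exclude them, then the contribution of a type is a lattice-point sum whose polynomiality in $\mathbf{x}$ you have not justified. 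The paper's proof handles exactly this: it writes $S_\Gamma(\mathbf{x})$ as a sum over bounded chambers of $\mathcal{A}_\Gamma(\mathbf{x})$ of $\sum_{f\in\Lambda\cap A}\varphi_{\mathcal{A}}(f)\prod_i M(v_i)$ and invokes weighted Ehrhart reciprocity (\cref{ehrhartmagic}), together with total unimodularity of the incidence matrix to guarantee integral vertices (hence polynomiality rather than quasipolynomiality). None of this machinery appears in your proposal, and without it the proof only covers $g=0$.

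Your degree bookkeeping is affected by the same issue. You compute the degree as $|E^0(\Gamma)|+2\sum_v g_1^v$ by treating each bounded edge weight as a linear form in $\mathbf{x}$; for cyclic graphs the weights are not functions of $\mathbf{x}$ at all, and the correct count is (degree of the multiplicity as a polynomial in the flow variables) plus (dimension $b^1(\Gamma)$ of the polytope), the latter coming from Ehrhart. That you land on the same number $4g-3+n$ is a coincidence of the two accountings at the extremal configurations, not a validation of the method. The parts of your proposal that are sound --- polynomiality of the vertex multiplicities via \cref{polynomialgw}, and the observation that the chamber structure should be governed by the signs of the partial sums $\sum_{i\in I}x_i$ (which the paper makes precise via disconnected flows and the discriminant arrangement) --- do match the paper's strategy, but the missing Ehrhart-theoretic core is not a detail one can route around.
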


  The rest of the section is devoted to constructing appropriate formalisms, so that we can prove our piecewise polynomiality.

\subsubsection{Refining weighted double Hurwitz numbers}
We construct extra structures on weighted double Hurwitz numbers using our tropical correspondence as well as a suitable refinement.

\begin{definition}
Let $r$ be a non-negative integer, and $\mathbf{x} \in \Z^n$ such that $g$ is a non-negative integer. Take some ordered partition $\lambda'$ of $r$. We define
\[
H_{G, \lambda'}^r(\mathbf{x}) = \sum_{\pi \in \Gamma(\rsph_{trop}, \lambda;r, \mathbf{x}^+,  \mathbf{x}^-)} \frac{1}{
|\aut (\pi)|} \frac{1}{\ell(\lambda')!} \prod_{v \in V(\Gamma)} m_{G,v} \prod_{e \in E(\Gamma)} \omega_e
\]
over the set of tropical covers as in \Cref{thm-tropcorr}, but with the additional condition that we only admit covers that yield the partition $\lambda'$.

Further, for an unordered partition $\lambda''$, we define
\[
H_{G, \lambda''}^r(\mathbf{x}) = \sum_{\lambda'} H_{G, \lambda'}^r(\mathbf{x})
\]
summing over ordered partitions $\lambda'$ obtained by some ordering of $\lambda ''$.
\end{definition}

\begin{remark}\label{indexpartition}
By construction, we note that
\[
H_{G}^r(\mathbf{x}) = \sum_{\lambda'}  H_{G, \lambda'}^r(\mathbf{x}) = \sum_{\lambda ''} H_{G, \lambda''}^r(\mathbf{x})
\]
for the first sum over all ordered partitions $\lambda'$ of $r$, and the second taken over all unordered partitions $\lambda ''$ of $r$.
\end{remark}

We must construct hyperplanes which cut out our chambers where polynomiality will hold.
We work with certain combinatorial formalism to prove our results, which we must also define. We need an alteration of our notions of tropical graphs.


\begin{definition}[Combinatorial cover]
For fixed $g$, $\mathbf{x} = (x_1, \dots, x_n) \in \Z^n$, $\lambda \vdash r = 2g-2+n$ unordered, a graph $\Gamma$ is a combinatorial cover of type $(g, \mathbf{x}, \lambda)$ if;

\begin{enumerate}[label=(\alph*)]
    \item $\Gamma$ is a connected graph with at most $r$ vertices; 
    \item $\Gamma$ has $n$ 1-valent vertices (leaves), with adjacent ends labelled by the weights $x_1, \dots, x_n$ all oriented inwards. If $x_i > 0,$ we say it is an in-end, otherwise it is an out-end;
    \item The set of edges which are not ends are denoted $E^{0}(\Gamma)$;
    \item There are $\ell(\lambda)$ inner vertices;
    \item We denote the inner vertices $v_1, \dots, v_{\ell(\lambda)}$ and assign a non-negative genus of $v_i$, $g(v_i)$ with the condition that $\lambda_i = \operatorname{val}(v_i) + 2g(v_i) -2$;
    \item reversing the orientation of out-ends, $\Gamma$ does not have any sinks or sources;
    \item The internal vertices are ordered compatibly with the partial ordering induced by the directions of edges;
    \item $g = \beta_1(\Gamma) + \sum g(v_i)$ for $\beta_1(\Gamma)$ the first Betti number of $\Gamma$;
    \item every internal edge $e$ is equipped with a weight $\omega(e) \in \N$. The weights satisfy the balancing condition at every inner vertex.
\end{enumerate}

We denote $\Gamma(\mathbf{x}, \lambda, \mathfrak{d}, \mathfrak{o})$ for a graph with directed edges $(\mathfrak{d})$, and compatible vertex ordering $(\mathfrak{o})$.
\end{definition}

It is clear by construction that these combinatorial covers are simply our tropical covers, and using \Cref{indexpartition}, we have that
\begin{equation}\label{combinatorialcoverhur}
H_G^r(\mathbf{x}) = \sum_{\lambda \vdash r} \sum_\Gamma \frac{1}{|\aut(\Gamma)|} \frac{1}{\ell(\lambda)!} \varphi_\Gamma
\end{equation}
for the second summation indexing over all combinatorial covers $\Gamma$ of type $(g,\mathbf{x}, \lambda)$, where we have
\[
\varphi_\Gamma = \prod_{i = 1}^{\ell (\lambda)} m_{v_i} \prod_{e \in E^{0}(\Gamma)} \omega(e).
\]

\subsubsection{Hyperplane Arrangements}
We can view any $\mathbf{x}$-graph $\Gamma$ as a one-dimensional cell complex once we equip it with a fixed reference orientation. Then the differential $d: \R E_\Gamma \to \R V_\Gamma$ sends directed edges to the different of its head and tail vertices gives us a short exact sequence
\[
 0 \to \ker(d) \to \R E_\Gamma \to \im (d) \to 0.
\]
We then decompose $\R E_\Gamma = \R^n \bigoplus \R^{|E^0(\Gamma)|}$ into ends and internal edges. Then we will have the vector $(\mathbf{x}, 0) \in \im (d)$ when $\sum x_i = 0$.

\begin{definition}
The space of flows is constructed as
\[
F_{\Gamma}(\mathbf{x}) = d^{-1}(\mathbf{x}, 0).
\]
\end{definition}

Within this space, we have the hyperplane arrangement $\mathcal{A}_\Gamma(\mathbf{x})$ given by the restriction of the co-ordinate hyperplanes corresponding to internal edges, $E^0(\Gamma)$. We have a defining polynomial for the arrangement
\[
\varphi_{\mathcal{A}} = \prod e_i
\]
in terms of $e_i$, the co-ordinate functions on $\R E_\Gamma$ restricted to the space of flows $F_\Gamma(\mathbf{x})$.

We note that we prescribe some reference orientation on a given $\mathbf{x}$-graph. This corresponds to fixing some bounded chamber in the hyperplane arrangement.

\begin{lemma}[\cite{cavalieri2010chamberstructuredoublehurwitz}]\label{boundedchambers}

The bounded chambers of $\mathcal{A}_\Gamma$ correspond to orientations of $\Gamma$ which have no directed cycles. Further, for any $(\mathbf{x}, \lambda)$-graph $\Gamma$, the bounded chambers of $\ba_\Gamma$ are in bijection with directed $(\mathbf{x}, \lambda)$-graphs projecting to $\Gamma$ upon forgetting of edge orientations coming from a combinatorial cover.
\end{lemma}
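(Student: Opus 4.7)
The plan is to proceed in two stages, first establishing a dictionary between chambers of $\mathcal{A}_\Gamma(\mathbf{x})$ and edge orientations, and then checking that boundedness translates to acyclicity. Since $F_\Gamma(\mathbf{x}) = d^{-1}(\mathbf{x}, 0)$ is an affine subspace of $\mathbb{R}E_\Gamma$ and $\mathcal{A}_\Gamma$ is the restriction of the coordinate hyperplanes $\{e_i = 0\}$ for $e_i \in E^0(\Gamma)$, I would first observe that each chamber is determined by a choice of sign for every internal coordinate $e_i$. Reading a negative coordinate as reversing the reference orientation of that edge, a chamber yields a full orientation of the internal edges; combined with the fixed orientations on the ends prescribed by the signs of the entries of $\mathbf{x}$, this gives an orientation of $\Gamma$.

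Next I would prove the equivalence bounded $\iff$ acyclic. For the forward direction by contrapositive, if the orientation contains a directed cycle $C$, the signed indicator vector $v_C$ lies in $\ker(d)$ and has strictly positive entries with respect to the induced orientation on edges of $C$ and zero entries elsewhere. For any point $p$ of the chamber the ray $p + t v_C$ with $t \geq 0$ preserves every sign and escapes to infinity, so the chamber is unbounded. Conversely, if the orientation is acyclic, I would fix a topological ordering of the vertices and process them in order: at each vertex, the incoming flow is already bounded in terms of the fixed end weights $x_i$ and the previously processed edges, so the balancing condition forces the outgoing edges (which are sums of non-negative quantities in the chamber) to be individually bounded as well. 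Hence every coordinate on the chamber is bounded.

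For the second assertion, I would match the data of a bounded chamber with that of a directed $(\mathbf{x}, \lambda)$-graph $\Gamma(\mathbf{x}, \lambda, \mathfrak{d}, \mathfrak{o})$ projecting to $\Gamma$. The orientation $\mathfrak{d}$ is precisely the orientation produced by the chamber, which is acyclic by the previous step; the no-sources/no-sinks condition (f) in the definition of a combinatorial cover is automatic from the chamber ends being oriented as prescribed by $\mathbf{x}$; and the vertex ordering $\mathfrak{o}$, required to be compatible with $\mathfrak{d}$, exists by acyclicity (any topological ordering works) and is carried over from the combinatorial cover data. Inversely, any directed $(\mathbf{x}, \lambda)$-graph projecting to $\Gamma$ defines a non-empty sign pattern on internal coordinates, hence a bounded chamber by acyclicity. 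I expect the main technical obstacle to be the boundedness step in the acyclic case, where one must argue compactness of the chamber rather than merely of each coordinate direction; the cleanest route is the recursive argument along a topological vertex order outlined above, using balancing to propagate bounds forward.
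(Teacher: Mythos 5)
The paper does not prove this lemma at all: it is imported verbatim from \cite{cavalieri2010chamberstructuredoublehurwitz}, so there is no internal proof to compare against. Your argument is the standard one and is essentially correct: chambers inject into sign vectors on the internal coordinates (regions with a fixed sign vector are convex, hence connected), a directed cycle gives a vector in $\ker(d)$ supported on internal edges along which one can translate without leaving the chamber (unboundedness), and in the acyclic case propagating bounds along a topological order via the balancing condition bounds every coordinate. Your closing worry is unfounded: since the chamber sits inside the affine subspace $F_\Gamma(\mathbf{x})\subset\R E_\Gamma$ and the $e_i$ are the ambient coordinates, bounding each coordinate function on the chamber already bounds the chamber; no separate compactness argument is needed.

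Two small corrections. First, your justification of the no-sinks/no-sources condition is off: orienting the ends according to the signs of $\mathbf{x}$ says nothing about an internal vertex not adjacent to any end. The correct reason is that on a chamber every edge carries a strictly positive flow in its chamber orientation, so a sink (resp.\ source) would violate balancing, as the outgoing (resp.\ incoming) side of the harmonicity equation would vanish while the other side is a nonempty sum of positive terms. Second, in the reverse direction you should say explicitly that the witness for non-emptiness of the sign region is the flow of the combinatorial cover itself (its positive integer edge weights give an interior point of the region), which is what upgrades the candidate sign pattern to an actual chamber; as stated, ``defines a non-empty sign pattern'' elides this. Neither point affects the validity of the overall argument.
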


We now want to rephrase our formulation in terms of combinatorial covers for our polynomiality to arise. 

\begin{definition}
We denote $S_\Gamma(\mathbf{x})$ the contribution to $H_{G,\lambda}^r(\mathbf{x})$ of all monodromy graphs with underlying $\mathbf{x}$-graph $\Gamma$, where the valences and degree at each vertex respect $\lambda_i = \operatorname{val}(v_i) +2g(v_i) -2$.
\end{definition}

\begin{definition}
For an $\mathbf{x}$-graph $\Gamma$, we refer to the chambers of $\ba_\Gamma(\mathbf{x})$ in the flow space $F_\Gamma(\mathbf x)$ as $F$-chambers.
For an $F$-chamber $A$, take $\Gamma_A$ as the directed $\mathbf x$-graph $\Gamma$ with edge directions corresponding to $A$. We denote $m(A)$ or $m(\Gamma_A)$ for the number of orderings of vertices of $\Gamma_A$. By \Cref{boundedchambers}, $m(A) = 0$ if and only if the chamber $A$ is unbounded. We use the notation $\operatorname{Ch}(\ba_\Gamma(\mathbf x))$ for the set of chambers of $\ba_\Gamma(\mathbf x)$, and $\mathcal{BC}_\Gamma(\mathbf x)$ denotes the set of bounded chambers.

The sign on $\varphi_{\ba}$ alternates on adjacent $F$-chambers, when we swap the direction of a single edge. We use $\operatorname{sign}(A)$ to denote the sign of $\varphi_\ba$ on $A$.

\end{definition}

Before we proceed any further, we first note that the vertex multiplicities are all polynomial terms. Recalling our vertex multiplicity of \Cref{thm-tropcorr}, then noting the modification that here we no longer preserve the information of the specific $g_1, g_2$ gives that
\begin{equation}\label{importantvertmult}
m_{G,v_i} \hspace{-.33em} = \hspace{-.2em} \lambda_i! | \hspace{-.2em}\aut(\mathbf{x}^+) \hspace{-.1em}| | \hspace{-0em} \aut(\mathbf{x}^-) | A_{\lambda_i} \hspace{-.4em}\sum_{g_1^i + g_2^i = g(v_i)} \hspace{-.6em}\Big< \hspace{-.2em}\tau_{2g_2^i -2}(\omega) \hspace{-.2em} \Big>_{g_2^i}^{\rsph, \circ} \hspace{-.2em} \Big< \mathbf{x}^{+} \hspace{-.2em} \mid \hspace{-.2em}\tau_{2g_1^i -2+ \ell(\mathbf{x}^{+}) + \ell(\mathbf{x}^{-})}(\omega) \hspace{-.2em}\mid \hspace{-.2em} \mathbf{x}^{-}  \hspace{-.2em} \Big>_{g_1^j}^{\rsph, \circ}\hspace{-.2em} ,
\end{equation}
we proceed.

Once we apply \Cref{polynomialgw}, we obtain that
\[
m_{G,v_i} = \lambda_i! A_{\lambda_i} \sum_{g_1^i + g_2^i = g(v_i)} \hspace{-1em}\Big< \hspace{-.2em}\tau_{2g_2^i -2}(\omega) \hspace{-.2em} \Big>_{g_2^i}^{\rsph, \circ} \hspace{-.2em} \left[w^{g_1^i}\right]\frac{\prod_j\sba( \mathbf{x}^{+}_jw) \prod_j \sba(\mathbf{x}^{-}_jw)}{\sba(w)}.
\]
We then note the Taylor expansions, $\mathcal{S}(w) = 1 + \frac{w^2}{24} + \frac{w^4}{1920} + O(w^6)$ and $\frac{1}{\mathcal S (w)} =1 - \frac{w^2}{24} + \frac{7w^4}{5760} + O(w^6) $ are both
power series. Thus, $m_{G,v_i}$ is a polynomial in the adjacent edge weights. We use the notation $M(v_i)$ for this polynomial (upon fixing our weight generating function $G$, and note that it is independent of the flow of the respective branching graph.)

Now we return to the space of flows. For all integer values of $\mathbf{x}$, $F_\Gamma(\mathbf x)$ will have an affine lattice, coming from the integral structure of $\Z E_\Gamma$. We denote this lattice as
\[
\Lambda = F_\Gamma (\mathbf x) \cap \Z E_\Gamma.
\]
This allows for an appetising interpretation of $S_\Gamma(\mathbf x)$ via the hyperplane arrangement $\ba_\Gamma(\mathbf x)$. Choosing edge weights, that is a choice of flow on $\Gamma$, corresponds to picking points in $\Lambda$. Therefore
\begin{align}
S_\Gamma(\mathbf x)& = \frac{1}{|\aut (\Gamma)|} \sum _{A \in \operatorname{Ch}(\ba_\Gamma(\mathbf x))} m(A) \sum_{f \in A \cap \Gamma} \left( \prod_{e \in E^0(\Gamma)} \omega(e) \prod_i M(v_i) \right) \\
&= \frac{1}{|\aut (\Gamma)|} \sum _{A \in \operatorname{Ch}(\ba_\Gamma(\mathbf x))} m(A) \operatorname{sign}(A)\sum_{f \in A \cap \Gamma} \left(  \varphi_{\ba}(f) \prod_i M(v_i) \right).
\end{align}
Passing from the first to the second line arises once we use that the product of edge weights of the flow $f$ is the absolute value of $\varphi_\ba$ computed at $f = (e_i)_i$, so that if $f \in A$, this is just $\operatorname{sign}(A) \varphi_\ba(f).$


An essential idea underlying our proof here comes from Ehrhart theory.

\begin{theorem}[Weighted Ehrhart reciprocity \cite{ardila2015doublegromovwitteninvariantshirzebruch}]\label{ehrhartmagic}
Let $P$ be a lattice polytope in $\R^m$ and $f:\R^m \to \R$ a polynomial function. Then for each positive integer $n$, the functions
\[
L_{P,f}(n) = \sum_{z \in nP \cap \Z^m}f(z), \quad 
L_{P^\circ,f}(n) = \sum_{z \in nP^\circ \cap \Z^m}f(z),
\]
are polynomial.
\end{theorem}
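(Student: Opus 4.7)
The plan is to reduce the weighted Ehrhart identity to the classical unweighted Ehrhart--Stanley theorem via a generating-function argument, after first reducing to monomial weights. By linearity of $L_{P,f}(n)$ in $f$, it suffices to prove the theorem when $f(z) = z^{\alpha} := z_1^{\alpha_1}\cdots z_m^{\alpha_m}$ is a monomial in the coordinates of $z$, so we assume this for the remainder.

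For such $f$, introduce the multivariate lattice-point generating function $\sigma_{nP}(x) := \sum_{z \in nP \cap \Z^m} x^z$ and observe that $L_{P,z^\alpha}(n) = \left. D_\alpha\, \sigma_{nP}(x) \right|_{x = \mathbf 1}$, where $D_\alpha := \prod_i (x_i \partial_{x_i})^{\alpha_i}$. By Brion's theorem, $\sigma_{nP}(x)$ decomposes as a signed sum over the vertices $v$ of $nP$ of rational functions of the form $x^{v}\big/ \prod_i (1 - x^{u_{v,i}})$, with the $u_{v,i}$ being primitive edge directions at $v$. The bivariate series $\Sigma_P(q, x) := \sum_{n \geq 0} q^n \sigma_{nP}(x)$ is then a rational function of $(q,x)$; applying $D_\alpha$ and letting $x \to \mathbf 1$, one finds that $\Sigma_P(q, \mathbf 1)$ is a rational function of $q$ whose only pole is at $q = 1$, of order at most $\dim P + |\alpha| + 1$. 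The standard correspondence between rational generating series with a single pole at $q=1$ and polynomial coefficient sequences then yields that $L_{P, z^\alpha}(n)$ is a polynomial in $n$ of degree at most $\dim P + |\alpha|$.

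The interior statement follows from Ehrhart--Macdonald reciprocity, which relates $\sigma_{P^\circ}$ to $\sigma_P$ by the sign $(-1)^{\dim P}$ together with a variable inversion $x \mapsto x^{-1}$; carrying this identity through the same differential-operator computation gives $L_{P^\circ, f}(n) = (-1)^{\dim P} L_{P, f}(-n)$, so polynomiality in $n$ transfers from $L_{P,f}$ to $L_{P^\circ,f}$.

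The main technical obstacle is the limit $x \to \mathbf 1$ in the Brion decomposition: each individual summand has a pole there, so one must track the orders of cancellation very carefully to rule out any non-polynomial (e.g.\ quasi-polynomial) contribution to $L_{P,z^\alpha}(n)$. A cleaner alternative that circumvents this delicate limit is the Berline--Vergne local Euler--Maclaurin formula, which produces an explicit polynomial expression for $L_{P, z^\alpha}(n)$ of the asserted degree directly, bypassing the need to analyse Brion summands individually.
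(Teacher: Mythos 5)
The paper offers no proof of this statement: it is imported as a black box from the cited reference, with the degree bound deferred to a further citation in the remark that follows, so there is nothing in-paper to compare your argument against. Judged on its own, your sketch is a correct outline of one of the standard proofs. The reduction to monomial weights, the identity $L_{P,z^\alpha}(n)=D_\alpha\,\sigma_{nP}(x)\big|_{x=\mathbf 1}$, the observation that the tangent cone of $nP$ at the vertex $nv$ is the $n$-independent translate $nv+C_v$ (valid precisely because $P$ is a lattice polytope, so that $\sum_{n\ge 0}q^n\sigma_{nP}(x)$ collapses to $\sum_v S_{C_v}(x)/(1-qx^v)$), and the dictionary between a rational function of $q$ with a single pole at $q=1$ of order at most $\dim P+|\alpha|+1$ and a polynomial coefficient sequence of degree at most $\dim P+|\alpha|$ are all sound. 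Two caveats. First, the product form $x^{v}/\prod_i(1-x^{u_{v,i}})$ of a vertex-cone generating function is only literally available at simple vertices; in general you must first triangulate each tangent cone into simplicial (preferably unimodular) cones. This is harmless but should be stated, and it is also where the cleanest repair of your flagged $x\to\mathbf 1$ difficulty lives: after decomposing into unimodular half-open simplicial pieces, the weighted count over $nP\cap\Z^m$ becomes an explicit finite sum of products of binomial-type polynomials in $n$, and no delicate cancellation of poles is needed. Second, the reciprocity sign is off: Stanley's inversion $x\mapsto x^{-1}$ negates exponents, so for $f$ homogeneous of degree $d$ the correct identity is $L_{P^\circ,f}(n)=(-1)^{\dim P+d}L_{P,f}(-n)$ rather than $(-1)^{\dim P}L_{P,f}(-n)$; this does not endanger the polynomiality conclusion, but as written your reciprocity formula fails for odd-degree weights. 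Your fallback to the Berline--Vergne local Euler--Maclaurin formula is a legitimate, if heavier, alternative route to the same conclusion.
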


\begin{remark}\label{Ehrhart continuation} 
The polynomials $L_{P,f}(n), L_{P^\circ,f}(n)$ for $f$ a degree $p$ polynomial, and the polytope $P$ of dimension $d$ will be a polynomial of degree at most $d+p$. This is a classically known result, but see for example \cite{deloera2024ehrhartfunctionsweightedlattice} for a proof of this fact.
\end{remark}

We first prove the statement on our refinement of weighted double Hurwitz numbers. It will then be straightforward to extend this to the standard enumerations. Our refinement means a wall crossing formula will be possible to derive. 

\begin{theorem}

Let $r$ be a non-negative integer, and take some fixed length $n$ of $\mathbf x$. Take some ordered partition $\lambda \vdash d$ and an arbitrary weight generating function $G$. Then the function
\[
H_{G, \lambda}^r(\mathbf x) 
\]
is piecewise polynomial of degree at most $4g-3+n$ in $\mathbf{x}$.
\end{theorem}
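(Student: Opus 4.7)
The plan is to adapt the Ehrhart–theoretic strategy of Cavalieri–Johnson–Markwig for classical double Hurwitz numbers, accounting for the extra polynomial factors $M(v_i)$ that arise in the weighted setting from the Gromov–Witten vertex contributions.

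Starting from the refinement of the tropical correspondence in \eqref{combinatorialcoverhur}, we write
\[
H_{G,\lambda}^r(\mathbf{x}) = \sum_{\Gamma} \frac{1}{\ell(\lambda)!}\, S_\Gamma(\mathbf{x}),
\]
where the sum is over combinatorial types $\Gamma$ of $\mathbf{x}$-graphs compatible with $\lambda$, once edge orientations and vertex orderings have been absorbed into the chamber picture of $\mathcal{A}_\Gamma(\mathbf{x})$. Using \Cref{boundedchambers} we expand
\[
S_\Gamma(\mathbf{x}) = \frac{1}{|\aut(\Gamma)|} \sum_{A \in \mathcal{BC}_\Gamma(\mathbf{x})} m(A)\,\operatorname{sign}(A) \sum_{f \in A \cap \Lambda} \varphi_{\mathcal{A}}(f) \prod_{i=1}^{\ell(\lambda)} M(v_i)(f).
\]
The key observation, already encoded in \eqref{importantvertmult} and \Cref{polynomialgw}, is that each $M(v_i)$ is a polynomial in the weights of the edges adjacent to $v_i$, since the Taylor expansion of $\mathcal{S}(w)/\mathcal{S}(w)^{-1}$ is a power series from which we extract a finite coefficient. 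Therefore the integrand $\varphi_{\mathcal{A}}(f) \prod_i M(v_i)(f)$ is a genuine polynomial on the flow space $F_\Gamma(\mathbf{x})$.

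Next I fix a chamber $\mathfrak{c}$ of the resonance arrangement and verify that, as $\mathbf{x}$ varies in $\mathfrak{c}$, the combinatorial structure of $\mathcal{A}_\Gamma(\mathbf{x})$ is preserved: each bounded chamber $A$ is cut out by the same system of linear inequalities on $F_\Gamma(\mathbf{x})$ whose right-hand sides are linear in $\mathbf{x}$. Hence each bounded chamber $A = A(\mathbf{x})$ forms a family of lattice polytopes whose dilates are controlled by $\mathbf{x}$. Applying weighted Ehrhart reciprocity (\Cref{ehrhartmagic}) to each summand then shows that
\[
\sum_{f \in A(\mathbf{x}) \cap \Lambda} \varphi_{\mathcal{A}}(f) \prod_i M(v_i)(f)
\]
is polynomial in $\mathbf{x}$ on $\mathfrak{c}$. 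Summing over $A$, $\Gamma$, and orderings yields piecewise polynomiality of $H_{G,\lambda}^r(\mathbf{x})$.

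For the degree bound, I use that $F_\Gamma(\mathbf{x})$ has dimension $b^1(\Gamma)$, that $\varphi_{\mathcal{A}}$ has degree $|E^0(\Gamma)| = \ell(\lambda) + b^1(\Gamma) - 1$, and that $\prod_i M(v_i)$ contributes degree at most $2\sum_i g(v_i)$ (from the Gromov–Witten expansion, where the $w^{g_1^i}$ coefficient is homogeneous of degree $2g_1^i$ in the adjacent weights). By \Cref{Ehrhart continuation} the Ehrhart polynomials have degree at most
\[
b^1(\Gamma) + |E^0(\Gamma)| + 2\sum_i g(v_i) = 2b^1(\Gamma) + 2\sum_i g(v_i) + \ell(\lambda) - 1 = 2g + \ell(\lambda) - 1 \le 4g - 3 + n,
\]
since $\ell(\lambda) \le r = 2g - 2 + n$. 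The main technical obstacle is the careful bookkeeping that ensures the bounded chambers $A(\mathbf{x})$ really do form a coherent family of dilated lattice polytopes across $\mathfrak{c}$, so that Ehrhart theory applies uniformly; once this is in place, the polynomial degrees of $\varphi_{\mathcal{A}}$ and $\prod M(v_i)$ add cleanly to give the claimed bound.
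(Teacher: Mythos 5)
Your proposal follows essentially the same route as the paper: decompose $H_{G,\lambda}^r(\mathbf{x})$ into the contributions $S_\Gamma(\mathbf{x})$, expand over bounded chambers of $\mathcal{A}_\Gamma(\mathbf{x})$ in the flow space, observe via \Cref{polynomialgw} that the vertex multiplicities $M(v_i)$ are polynomials of degree $2g(v_i)$ in the adjacent edge weights, and apply weighted Ehrhart reciprocity, with the identical degree count $b^1(\Gamma)+|E^0(\Gamma)|+2\sum_i g(v_i)=2g+\ell(\lambda)-1\le 4g-3+n$. The point you flag as the remaining technical obstacle (uniformity of the chamber structure as $\mathbf{x}$ varies) is exactly what the paper defers to its subsequent wall-structure analysis, so the argument is complete to the same standard.
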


\begin{proof}
    Fix some $\mathbf x$-graph $\Gamma$ with reference orientation given by the flow $f$. Note that 
    \[
    \frac{1}{|\aut (\Gamma)|} \left( \varphi_\ba (f) \prod_i M(v_i) \right)
    \]
    is a polynomial of degree $|E^0(\Gamma)| + 2 \sum g(v_i)$ given $\varphi_\ba(f)$ is  a polynomial of degree $|E(\Gamma)|$, and each $M(v_i)$ is polynomial of degree $2g(v_i)$. Further, once we consider the Euler characteristic of our graph, we see that 
    \[
   | E(\Gamma)| = \beta_1(\Gamma) -1 + |V| = \beta_1(\Gamma) - 1 +n + \ell(\lambda).
    \]
    However we care solely about our inner edges, and we know we will have $n$ ends, so we see that 
    \[
    |E^0(\Gamma) |= \beta_1(\Gamma) - 1 + \ell(\lambda) = \ell(\lambda) - 1 + g - \sum g(v_i).
    \]
    We then compute the total degree of this polynomial to be
    \[
    |E^0(\Gamma)| + 2 \sum g(v_i) =  \ell(\lambda) - 1 + g +\sum g(v_i).
    \]

    We then seek an upper bound on this quantity. Note that $\lambda_i = \operatorname{val}(v_i) +2g(v_i) -2$ and $\operatorname{val}(v_i) \geq2$ so the quantity is maximised at $\lambda = (1,1, \dots, 1)$ so we get that
    \[
     |E^0(\Gamma)| + 2 \sum g(v_i)  \leq 3g-3+n.
    \]

    Thus at each point of the flow we must sum up. By \cite[Remark 2.11]{cavalieri2010chamberstructuredoublehurwitz}, each $F_\Gamma(\mathbf{x})$ is a $g$-dimensional affine space. 
    
   Then, by \Cref{ehrhartmagic} and \Cref{Ehrhart continuation}, we can consider $S_{\Gamma}(\mathbf x)$, where the chamber will be some polytope (or unbounded and thus zero) where we sum over the integral points, and determine it to be a polynomial of a specific degree, so long as the topology of $\ba_\Gamma(\mathbf x)$ does not change. It is important that each vertex is indeed an integer as otherwise we would only have quasipolynomiality. This results from the fact that the incidence matrix of a directed graph is totally unimodular. 

   The degree of the polynomial we already know to be $\ell(\lambda) + g + \sum g(v_i) -1$ and we have already stated the dimension of our polytope will be $\beta_1(\Gamma)$. Then, as each contribution to $H_{G, \lambda}^r$ is locally polynomial and we have a finite number of contributions, our total $H_{G, \lambda}^r(\mathbf{x})$ is locally polynomial of degree $\leq 4g -3 +n$ as desired.

\end{proof}

\subsubsection{Wall Structure}

With this, we have piecewise polynomiality for $H_{G, \lambda}^r(\mathbf{x})$, now we want to determine our chamber structure. As we mentioned, the polynomiality will hold wherever the topology of $\ba_\Gamma(\mathbf x)$ does not change upon varying $\mathbf x$. When translating generic hyperplane arrangements, we observe changes in topology whenever we pass through a non-transversality. However with our constraints, there can be non-transversalities which appear for every value of $\mathbf x$. The topology will only change upon passing through other nontransversalities. 
A nontransversality which occurs for every $\mathbf x$ is called good. We classify them as follows.

\begin{definition}
Consider a set $I$ of $k$ hyperplanes in $\ba_\Gamma(\mathbf x)$ (or equivalently edges in $\Gamma$) which intersect in codimension $k-l$. This intersection is good if there exists a set $L$ of $l$ vertices in $\Gamma$ so that $I$ is exactly the set of edges incident to vertices in $L$.
\end{definition}

\begin{definition}
The discriminant locus $\mathcal{D} \sub \R^n$ is the set of all values $\mathbf x$ such that there exists a directed $\mathbf x$-graph $\Gamma$ with its hyperplane arrangement $\ba_\Gamma(\mathbf x)$ having a nontransverse intersection that is not good.

The discriminant is a union of hyperplanes which we call the discriminant arrangement. We call the hyperplanes walls, and the chambers defined by this arrangement $H$-chambers.
\end{definition}

These $H$-chambers are the chambers of polynomiality for $H_{G,\lambda}^r(\mathbf x)$. We now need to verify that these chambers correspond to the resonance arrangements of \Cref{resonancearrangement}:
\[
\sum_{i \in I} x_i = 0 \vspace{-.5em},
\]
for $I \subsetneq \{ 1, \dots, n\}$.

\begin{definition}
A simple cut of a graph $\Gamma$ is a minimal set $C$ of edges which disconnected the ends of $\Gamma$. That is, for any two ends of $\Gamma$, any path between them contains an edge in $C$, and no proper subset of $C$ has this property.

For any $\mathbf x$-graph $\Gamma$, a flow in $F_\Gamma(\mathbf x)$ is said to be disconnected if there exists some simple cut $C$ with the flow on each edge of $C$ being zero.
\end{definition}

With this, if a flow is disconnected by the balancing condition means that the sum $\sum_{i \in I}x_i $ of weights of ends on any connected component of $\Gamma \backslash C$ is 0.

\begin{lemma}[\cite{cavalieri2010chamberstructuredoublehurwitz}]
The discriminant arrangement $\mathcal{D}$ is the set of $x \in \R^n$ such that there is some $\mathbf{x}$-graph $\Gamma$ where $F_\Gamma(\mathbf{x})$ admits a disconnected flow.
\end{lemma}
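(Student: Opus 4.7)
The plan is to establish both containments between $\mathcal{D}$ and the locus of $\mathbf{x}$ supporting a disconnected flow for some $\mathbf{x}$-graph $\Gamma$. A point of the intersection $\bigcap_{e \in I}\{e = 0\} \cap F_\Gamma(\mathbf{x})$ is precisely a flow vanishing on every edge in $I$ while respecting balancing and the prescribed ends $\mathbf{x}$, so the argument amounts to tracking when this intersection has dimension strictly larger than the generic value $\beta_1(\Gamma) - |I|$.

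For the easier inclusion, suppose $F_\Gamma(\mathbf{x})$ admits a disconnected flow $f$ with simple cut $C$. Removing $C$ separates $\Gamma$ into components $\Gamma_1, \dots, \Gamma_m$ with $m \geq 2$, each containing some ends. Applying balancing separately on each $\Gamma_j$ to the restriction of $f$ forces the non-trivial resonance $\sum_{i : \text{end } i \in \Gamma_j} x_i = 0$ for every $j$. This linear relation among end weights is equivalent to a linear dependence among the edge-coordinate functionals $\{e : e \in C\}$ restricted to $F_\Gamma(\mathbf{x})$, so the hyperplanes $\{e = 0\}_{e \in C}$ intersect non-transversely. This intersection cannot be good: if $C$ coincided with the set of edges incident to a vertex subset $L$, then the balance equations at the vertices of $L$ would make the resonance an identity in $\mathbf{x}$ rather than a genuine linear condition, contradicting our non-trivial resonance. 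Hence $\mathbf{x} \in \mathcal{D}$.

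For the reverse inclusion, suppose $\mathbf{x} \in \mathcal{D}$, and let $I \subseteq E^0(\Gamma)$ witness a non-good, non-transverse intersection in $\mathcal{A}_\Gamma(\mathbf{x})$. Linear dependencies among the coordinate functionals $\{e\}_{e \in I}$ on the flow space are precisely cocycles in $\Gamma$, i.e.\ edge sets arising as boundaries of vertex subsets. Good intersections correspond to unions of single-vertex cocycles, whose associated relations are automatic via balancing. Since $I$ is not good, the dependency must involve a cocycle whose associated vertex subset separates at least two ends non-trivially, producing a simple sub-cut $C \subseteq I$ on which the flow vanishes. The resulting flow is disconnected in the required sense.

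The main obstacle is the cocycle-space analysis underlying the reverse direction: one must show precisely that the linear relations among edge coordinates restricted to $\ker(d) \subseteq \R E_\Gamma$ are spanned by cocycles arising from vertex bipartitions, and that \emph{good} corresponds exactly to the tautological (single-vertex) part of this cocycle space. This reduces to a standard analysis of the incidence matrix of $\Gamma$ combined with the minimality axiom defining simple cuts. Once this correspondence is in place, the remaining steps are routine combinatorial bookkeeping, and the statement indeed coincides with the one proved in \cite{cavalieri2010chamberstructuredoublehurwitz}.
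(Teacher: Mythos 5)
The paper does not actually prove this lemma: it is imported verbatim from \cite{cavalieri2010chamberstructuredoublehurwitz}, and the only in-paper commentary is the subsequent remark that the trivalent-vertex argument there generalises to the higher-valent, positive-genus vertices used here. So your proposal has to stand as a free-standing proof, and as such it is a reasonable outline of the Cavalieri--Johnson--Markwig argument but not yet a proof. The gap sits exactly where you flag it, and it is not ``routine combinatorial bookkeeping'': the identification of the linear relations among the functionals $\{e\}_{e\in I}$ on $F_\Gamma(\mathbf{x})$ with cocycles $d^T\phi$ supported on $I$, giving deficit $l$ equal to the number of connected components of $\Gamma\setminus I$ minus one, together with the verification that this deficit is realised by a vertex set $L$ (goodness) precisely when no component of $\Gamma\setminus I$ separates the ends --- that \emph{is} the lemma. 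Deferring it leaves both of your inclusions unestablished.

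In particular, the forward direction's claim that the non-transversality produced by a simple cut ``cannot be good'' is asserted via a one-line appeal to balancing, and as written it does not go through: it hinges on whether ends are counted among the ``edges incident to $L$''. With the internal-edges-only reading of the paper's definition, the graph with two vertices joined by two parallel internal edges, each vertex carrying two ends, has a simple cut (the two internal edges) that is simultaneously the internal edge-star of a single vertex and has deficit exactly $1$; your argument does not exclude this, yet the lemma requires this point to lie in $\mathcal{D}$. The component-counting identity is what actually shows that an end-separating cut contributes a relation in excess of those explained by isolated vertex stars, hence a bad intersection. Finally, you do not address the one issue the paper itself raises: that the source proves this for trivalent graphs, so one should note that the cocycle analysis depends only on the incidence matrix and is therefore insensitive to vertex valence and genus --- which is precisely the generalisation being invoked.
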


\begin{remark}
The proof presented in \cite{cavalieri2010chamberstructuredoublehurwitz} uses that the graph only has trivalent vertices, however the generalisation to higher valence is clear.
\end{remark}

\begin{theorem}
The walls for the discriminant arrangement are given by the resonances:
\[
\sum_{i \in I}x_i = 0, \quad \text{for any proper subset }I \subsetneq \{1, \dots, n\}.
\]
\end{theorem}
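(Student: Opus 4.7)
By the preceding lemma, it suffices to show the equality of subsets
\begin{equation*}
\mathcal{D} = \bigcup_{I \subsetneq \{1,\dots,n\}} W_I
\end{equation*}
of the hyperplane $\{\sum_i x_i = 0\}$. I would proceed by double inclusion, with the balancing condition at inner vertices doing essentially all of the work.

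For the forward inclusion $\mathcal{D} \subseteq \bigcup_I W_I$, take $\mathbf{x} \in \mathcal{D}$ and pick an $\mathbf{x}$-graph $\Gamma$ admitting a disconnected flow witnessed by a simple cut $C$ on whose edges the flow vanishes. Let $\Gamma'$ be any connected component of $\Gamma \setminus C$ and let $I \subsetneq \{1,\dots,n\}$ be the indices of the ends of $\Gamma$ that lie in $\Gamma'$ (the inclusion is proper since $C$ disconnects the ends of $\Gamma$). Now sum the balancing condition over all inner vertices of $\Gamma'$: every purely internal edge of $\Gamma'$ cancels (it appears with opposite signs at its two endpoints), every edge of $C$ incident to $\Gamma'$ contributes zero, and every end of $\Gamma'$ contributes its signed weight $x_i$. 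The resulting identity is $\sum_{i \in I} x_i = 0$, so $\mathbf{x} \in W_I$.

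For the reverse inclusion $W_I \subseteq \mathcal{D}$, given any proper subset $I \subsetneq \{1,\dots,n\}$, I would exhibit an explicit witness: a \emph{dumbbell} $\mathbf{x}$-graph $\Gamma_I$ with two genus-$0$ inner vertices $v_1, v_2$ joined by a single bridge edge $e$, where the ends indexed by $I$ are attached to $v_1$ and those indexed by $I^c$ to $v_2$. For any $\mathbf{x} \in W_I$, the balancing condition at $v_1$ forces $\omega(e) = \sum_{i \in I} x_i = 0$, so $\{e\}$ is a simple cut with vanishing flow, placing $\mathbf{x}$ in $\mathcal{D}$. The dumbbell is an admissible $\mathbf{x}$-graph: since both $I$ and $I^c$ are nonempty the vertices have valences $|I|+1 \geq 2$ and $|I^c|+1 \geq 2$, and a compatible partition $\lambda$ together with vertex genera satisfying $\lambda_i = \operatorname{val}(v_i) + 2g(v_i) - 2$ and $g(\Gamma_I) = g(v_1) + g(v_2)$ can always be chosen (e.g.\ all-zero genera). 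The only (mild) obstacle I anticipate is the sign bookkeeping in the balancing sum of the forward direction, which requires fixing a consistent convention for in-ends versus out-ends so that the cancellations line up as claimed; once this is in place, both inclusions follow directly from the combinatorics.
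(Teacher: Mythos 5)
Your proof is correct and takes essentially the same route as the paper: both directions rest on the preceding lemma identifying $\mathcal{D}$ with the locus of disconnected flows, the forward inclusion comes from summing the balancing condition over a component of $\Gamma\setminus C$, and the reverse inclusion is witnessed by a graph with a single bridge edge separating the ends indexed by $I$ from those indexed by $I^c$. You merely spell out the dumbbell construction and the balancing bookkeeping that the paper leaves implicit.
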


\begin{proof}
    We know that for every graph $\Gamma$, the walls of polynomiality of $S_\Gamma(\mathbf x)$ are the points $\mathbf x$ so that $\Gamma$ admits a disconnected flow. We know this is a subset of the resonance arrangement. For the reverse inclusion, take any resonance. It is now possible to construct a graph $\Gamma$ with some edge $e$, so that $\Gamma \backslash \{e\}$ has two components, one that has all the ends of $I$, and the other containing the edges of $I^c$.
\end{proof}

Thus this demonstrates that each $H_{G,\lambda}^r(\mathbf x)$ is piecewise polynomial in the resonance chambers. This further implies that the (finite) sum over all $\lambda \vdash r$ will also be piecewise polynomial, $H_G^r(\mathbf x)$. This then also proves \Cref{thm-piecewisepoly}.

\subsection{Wall-crossing formulae}
Here, we extend our piecewise polynomiality to a wall-crossing formula for weighted double Hurwitz numbers. Given our refinement, the result is immediate upon application of results from \cite{hahn2019wallcrossingrecursionformulaetropical}. There, the results are specifically for monotone and strictly monotone Hurwitz numbers but the techniques allow for vertex multiplicities and genera as we require for our graphs.

\begin{definition}
    Let $\mathfrak{c}_1, \mathfrak c_2$ be two $H$-chambers, adjacent along some wall $W_I$, taking $\mathfrak c_1$ as the chamber with $\mathbf x_I = \sum_{i \in I} x_i <0$. Let $P_{G,i}^\lambda(\mathbf{x})$ be the polynomial for $H_{G,\lambda}^r$ in $\mathfrak c_i$. Then  we define the wall crossing function as
    \[
    WC_{G, I}^\lambda (\mathbf{x}) = P_{G,2}^\lambda(\mathbf{x}) -P_{G,1}^\lambda(\mathbf{x}).
    \]
\end{definition}

\begin{theorem}\label{wallcrossingthm}
    Take $g$ a non-negative integer, a fixed length $n$ of $\mathbf{x}$ and $\lambda$ an unordered partition of $r = 2g-2+n$. Then we have
    \[
   WC_{G, I}^\lambda (\mathbf{x}) = \sum_{|\mathbf y| = |\mathbf z| = |\mathbf{x}_I|} \sum_{\substack{\lambda^i \text{ unordered} \\ \lambda^1 \cup \lambda^2 \cup \lambda^3 = \lambda}} \left( (-1)^{\ell(\lambda^2)} \frac{\prod y_i}{\ell(\mathbf{y})!} \frac{\prod z_i}{\ell(\mathbf{z})!} H^{|\lambda^1|}_{G, \lambda^1}(\mathbf{x}_I,-\mathbf y) H_{G, \lambda^2}^{|\lambda^2|, \bullet} (\mathbf{y},- \mathbf z) H_{G, \lambda^3}^{|\lambda^3|}(\mathbf z, \mathbf{x}_{I^C}) \right)
    \]
    for $\mathbf y$ an ordered tuple of length $\ell(\mathbf y)$ of positive integers with sum $\mathbf{z}$ (analogous for $\mathbf z$). The genus of graphs counted in $H^{|\lambda^1|}_{G, \lambda^1}(\mathbf{x}_I,\mathbf y)$ is given by $|\lambda^1| = 2g_1 -2 + \ell(\mathbf{x}_I) + \ell(\mathbf y)$ and similarly $g_2, g_3$ are determined.
\end{theorem}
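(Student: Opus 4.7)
My plan is to reduce the wall-crossing computation for the refined invariant $H^r_{G,\lambda}(\mathbf{x})$ to a purely combinatorial problem on combinatorial covers and then invoke the tropical wall-crossing analysis already carried out in \cite{hahn2019wallcrossingrecursionformulaetropical}. The key observation is that the combinatorial structure of our covers -- the underlying graphs, edge weights, flow space, and the hyperplane arrangement $\mathcal{A}_\Gamma(\mathbf{x})$ -- is exactly the same as in the tropical Jucys setting treated there. Only the local vertex multiplicities $M(v_i)$ differ, and by \eqref{importantvertmult} together with \cref{polynomialgw} these are still polynomials in the weights of the adjacent edges. Because the wall-crossing arguments of \textit{loc.cit.} rely only on this polynomiality and on the graph-theoretic structure, they should transfer to our setting essentially verbatim.

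Concretely, starting from \eqref{combinatorialcoverhur}, I would split the contribution of each graph $\Gamma$ by the orientations of its edges. Only those orientations corresponding to covers admitting a simple cut $C$ separating the $I$-ends from the $I^c$-ends contribute differently to $P^\lambda_{G,1}$ and $P^\lambda_{G,2}$; all other orientations give the same polynomial in both chambers and cancel in the difference. Removing the edges of $C$ decomposes $\Gamma$ into a left piece $\Gamma_1$ carrying the ends $\mathbf{x}_I$ together with new ends of weights $-\mathbf{y}$, a (possibly disconnected) middle piece $\Gamma_2$ absorbing any loops or handles and carrying ends of weights $\mathbf{y}$ and $-\mathbf{z}$, and a right piece $\Gamma_3$ carrying the ends $\mathbf{z}$ and $\mathbf{x}_{I^c}$. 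The balancing condition forces $|\mathbf{y}|=|\mathbf{z}|=|\mathbf{x}_I|$. Because vertex multiplicities are local, $\varphi_\Gamma$ factorizes along $C$ and the partition $\lambda$ splits as $\lambda^1\cup\lambda^2\cup\lambda^3$ according to which sub-cover contains each inner vertex.

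The three sub-covers are then repackaged into the weighted Hurwitz numbers $H^{|\lambda^1|}_{G,\lambda^1}$, $H^{|\lambda^2|,\bullet}_{G,\lambda^2}$, $H^{|\lambda^3|}_{G,\lambda^3}$ that appear in the statement. The factors $\prod y_i/\ell(\mathbf{y})!$ and $\prod z_j/\ell(\mathbf{z})!$ arise by restoring the edge weights of the cut edges (these were interior in $\Gamma$, hence contributed to $\varphi_\Gamma$, but appear as ends in the sub-covers, where ends are not weighted) and symmetrizing over the orderings of the cut points. The middle piece must be counted as disconnected because some components of $\Gamma_2$ may attach only via cut edges and contribute independently, whereas $\Gamma_1$ and $\Gamma_3$ are required to be connected in order for the original cover $\Gamma$ to be connected.

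I expect the main obstacle to be the sign $(-1)^{\ell(\lambda^2)}$. It encodes the parity of the orientation reversals along the cut edges when crossing the wall and must be reconciled with the flow structure of the middle piece $\Gamma_2$. In \cite{hahn2019wallcrossingrecursionformulaetropical} this reduction is handled via a Cayley-type identity on spanning structures of the middle subgraph, and because that identity is purely graph-theoretic it applies equally here once we note that each $M(v_i)$, being a polynomial in its adjacent edge weights, behaves multiplicatively across the cut. Putting the combinatorial pieces together yields precisely the claimed wall-crossing formula.
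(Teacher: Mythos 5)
Your proposal follows essentially the same route as the paper: both reduce the wall crossing to the cut--decomposition machinery of \cite{cavalieri2010chamberstructuredoublehurwitz} and \cite{hahn2019wallcrossingrecursionformulaetropical}, using that the vertex multiplicities $M(v_i)$ are local polynomials in the adjacent edge weights so that $\varphi_\Gamma$ factorises along a cut into the three pieces producing $H^{|\lambda^1|}_{G,\lambda^1}$, $H^{|\lambda^2|,\bullet}_{G,\lambda^2}$, $H^{|\lambda^3|}_{G,\lambda^3}$, with the multinomial coefficient cancelling against $\ell(\lambda^1)!\,\ell(\lambda^2)!\,\ell(\lambda^3)!/\ell(\lambda)!$. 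One small correction: the sign $(-1)^{\ell(\lambda^2)}$ does not record orientation reversals of the cut edges, but arises from the inclusion--exclusion over all $I$-cuts refining a given thin cut via the identity $(-1)^t\binom{\ell(\lambda)}{s,t,u}=\sum_{C\in P(T)}(-1)^{\operatorname{rk}(C)-1}\binom{\ell(\lambda)}{s,t_1,\dots,t_N,u}$, i.e.\ it counts the inner vertices of the middle piece.
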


\begin{definition}
    Take $\Gamma$ a directed graph, and $E$ a subset of the edges of $\Gamma$. We also consider the graph whose edges are the connected components of $\Gamma \backslash E^C$, with vertex set $E^C$. This graph is denoted the contraction of $\Gamma$ with respect ot $E$, we denote it by $\Gamma / E$.

    Fix a directed $\mathbf x$-graph $\Gamma_A$ and some subset $I \sub \{ 1, \dots , n \}$. Then the set $\operatorname{Cuts}_I(\Gamma_A)$ of $I$-cuts of $\Gamma_A$ is the set of all subsets $C$ of $E(\Gamma_A)$, such that $C = \varnothing$ or 
    \begin{enumerate}
        \item $\Gamma_A \backslash C$ is disconnected;
        \item The ends of $\Gamma_A$ lie on exactly two components of $\Gamma_A \backslash C$, one contains all ends indexed by $I$, the other all ends indexed by $I^C$;
        \item the directed graph $\Gamma / C^C$ is acyclic and has the connected component containing $I$ as the initial vertex, and the component containing $I^C$ as the final vertex.
    \end{enumerate}

    Take $v(\Gamma_A \backslash C)$ as the number of components of $\Gamma_A \backslash C$, to define the rank of $C$ as
    \[
    \operatorname{rk}(C) = v(\Gamma_A \backslash C) - 1.
    \]
\end{definition}
Then using a result of \cite{cavalieri2010chamberstructuredoublehurwitz}, we have
\[
WC(x_2) = \sum_\Gamma \sum_{A \in \mathcal{BC}_\Gamma (x_2)} \sum_{C \in \operatorname{Cut}_I(\Gamma_A)} (-1)^{\operatorname{rk}(C)-1} \binom{\ell(\lambda)}{s,t_1, \dots t_N, u} \left( \sum_{\Lambda \cap A}  \varphi_{\mathcal{A}} \prod_i M(v_i)\right)
\]
for $N = \operatorname{rk}(C)-1$ and $t_1, \dots t_N$ are the number of inner vertices of the $N$ inner components of $\Gamma_A \backslash C$.

\begin{definition}
    Let $\Gamma$ be an $\mathbf{x}$-graph, and $C \in \operatorname{Cuts}_I(\Gamma_A)$. Then $C$ is a thin cut if all edges in $C$ are adjacent to either the initial component containing $I$ or the final component containing $I^C$. For such a thin cut $T$, we construct $P(T)$, the set of all cuts $C \in \operatorname{Cuts}_I(\Gamma_A)$ which contain $T$.
\end{definition}

Then using the correct sign convention of \cite{hahn2019wallcrossingrecursionformulaetropical} we have
\[
(-1)^t \binom{\ell(\lambda)}{s,t,u} = \sum_{C \in P(T)} (-1)^{\operatorname{rk}(C)-1} \binom{\ell(\lambda)}{s,t_1, \dots t_N, u}, 
\]
which combines for us to say
\[
WC(x_2) = \sum_\Gamma \sum_{A \in \mathcal{BC}_\Gamma(x_2)} \sum_{\substack{ T \in \operatorname{Cut}_I(\Gamma_A) \\ \text{thin}}} (-1)^t \binom{\ell(\lambda)}{s,t,u} \left( \sum_{\Lambda \cap A}  \varphi_{\mathcal{A}} \prod_i M(v_i)\right).
\]

We note that each thin cut divides $\Gamma_A$ into three parts; the initial component $\Gamma_A^1$, an intermediate part $\Gamma_A^2$ (which may be disconnected) and a final component $\Gamma_A^3$. Note that $\Gamma_A^1$ contributes to $H^{|\lambda^1|}_{G, \lambda^1}(\mathbf{x}_I,-\mathbf y)$, $\Gamma_A^2$ to $ H_{G, \lambda^2}^{|\lambda^2|, \bullet} (\mathbf{y},- \mathbf z)$ and $\Gamma_A^3$ to $ H_{G, \lambda^3}^{|\lambda_3|}(\mathbf z, \mathbf{x}_{I^C})$, $\lambda^1 \cup \lambda^2 \cup \lambda^3 = \lambda$ and $\mathbf{y}, \mathbf{z}$ are some partitions such that $|\mathbf{y}| = | \mathbf{x}_I|, \ |\mathbf{z}| = |\mathbf{x}_{I^c}|$. Noting that 
\[
\varphi_{\Gamma_A} = \frac{\ell(\lambda_1)! \ell(\lambda_2)! \ell(\lambda_3)!}{\ell(\lambda)!} \frac{\prod y_i}{\ell(\mathbf{y})!} \frac{\prod z_i}{\ell(\mathbf{z})!}  \varphi_{\Gamma_A^1} \varphi_{\Gamma_A^2}\varphi_{\Gamma_A^3,}  
\]
and
\[
\binom{\ell(\lambda)}{s,t,u} = \binom{\ell(\lambda)}{\ell(\lambda_1), \ell(\lambda_2), \ell(\lambda_3)} = \frac{\ell(\lambda)!}{\ell(\lambda_1)! \ell(\lambda_2)! \ell(\lambda_3)!}
\]
combines to cancel a factor. This then completes the proof of \cref{wallcrossingthm}.

\section{Tropical mirror symmetry for weighted Hurwitz numbers}
\label{sec-tropmirr}
In this section, we study elliptic weighted Hurwitz numbers towards tropical mirror symmetry. The following is our first main result.

\begin{theorem}
\label{thm-quasimod}
For fixed $g \geq 2$, and $\underline{\mu}=(\mu^1,\dots,\mu^n)$ a tuple of partitions of arbitrary size. Then, we have that 
\[
\sum_d N_{g,d}^{G,\circ}(\underline{\mu}) q^d
\]
is a quasimodular form of mixed weight $\le 6g-6+\sum (4\ell(\mu^i-2|\mu^i|))$.
\end{theorem}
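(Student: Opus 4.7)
The plan is to exploit the shifted symmetric function structure encoded in \cref{equ:shiftsymhur} and appeal to the Bloch--Okounkov theorem. Concretely, for fixed $g$ and $\underline{\mu}=(\mu^1,\dots,\mu^n)$ I would bundle the summand of \cref{equ:shiftsymhur} into the single function
\[
F(\gamma)\defeq r!\sum_{\lambda\vdash r}G_\lambda\prod_{i=1}^n f_{\mu^i}(\gamma)\,m_\lambda(\mathrm{cont}_\gamma),
\]
so that $N^{G,\bullet}_{g,d}(\underline{\mu})=\sum_{\gamma\vdash d}F(\gamma)$. Each $f_{\mu^i}$ is shifted symmetric in $\gamma$ by the Okounkov--Olshanski theorem, and, as recalled in \cref{sec-weightell}, so is $m_\lambda(\mathrm{cont}_\gamma)$; since the outer sum over $\lambda\vdash r$ is finite, $F\in\Lambda^\ast$.

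Next I would turn this into a statement about the generating function. Summing $F(\gamma)$ over all partitions of all sizes gives
\[
\sum_{d\ge 0}N^{G,\bullet}_{g,d}(\underline{\mu})\,q^d=\sum_{\gamma}F(\gamma)\,q^{|\gamma|}=\langle F\rangle_q\prod_{n\ge 1}(1-q^n)^{-1}.
\]
On the other hand, every disconnected elliptic Hurwitz cover splits uniquely into a connected genus-$g$ component carrying the full ramification profile $\underline{\mu}$, together with a disjoint union of unramified covers of the elliptic target. Such unramified covers are themselves elliptic by Riemann--Hurwitz, and the classical identity $\exp\bigl(\sum_{k\ge 1}(\sigma_1(k)/k)\,q^k\bigr)=\prod_{n\ge 1}(1-q^n)^{-1}$ shows that their automorphism-weighted degree generating series is exactly $\prod_{n\ge 1}(1-q^n)^{-1}$. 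Therefore
\[
\sum_{d\ge 0}N^{G,\bullet}_{g,d}(\underline{\mu})\,q^d=\Bigl(\sum_{d\ge 0}N^{G,\circ}_{g,d}(\underline{\mu})\,q^d\Bigr)\prod_{n\ge 1}(1-q^n)^{-1},
\]
and comparing displays the partition-function factor cancels to yield
\[
\sum_{d\ge 0}N^{G,\circ}_{g,d}(\underline{\mu})\,q^d=\langle F\rangle_q,
\]
which is quasimodular by the Bloch--Okounkov theorem.

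The step I expect to be the main obstacle is the weight bookkeeping needed to match the claimed bound $6g-6+\sum_i(4\ell(\mu^i)-2|\mu^i|)$. This requires combining the weight of $f_{\mu^i}$ in terms of $|\mu^i|$ and $\ell(\mu^i)$ with an estimate for the maximal weight of $m_\lambda(\mathrm{cont}_\gamma)$ as $\lambda$ ranges over partitions of $r$, and substituting the ($d$-independent) expression $r=2g-2+\sum_i(|\mu^i|-\ell(\mu^i))$ justified by \cref{rem-elliptic}. The disconnected-to-connected reduction and the invocation of Bloch--Okounkov are formal once \cref{equ:shiftsymhur} is in hand, so essentially all the delicate arithmetic lives in this final weight estimate.
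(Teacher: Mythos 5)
Your overall strategy --- reading off from \cref{equ:shiftsymhur} that the elliptic weighted Hurwitz numbers are values of a shifted symmetric function summed over partitions, and then invoking the Bloch--Okounkov theorem --- is exactly the route the paper takes (its proof is a one-line deferral to \cite[Section 4]{Hahn_2022}, which runs precisely this argument). The identification $\sum_d N^{G,\bullet}_{g,d}(\underline{\mu})q^d=\langle F\rangle_q\prod_{n\ge1}(1-q^n)^{-1}$ with $F\in\Lambda^\ast$ is correct. The gap is in your disconnected-to-connected reduction. You claim that every disconnected elliptic Hurwitz cover splits uniquely into one connected genus-$g$ component carrying all the ramification plus a union of unramified covers of $E$. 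This is false for $g\ge 3$: by Riemann--Hurwitz every component of a cover of an elliptic curve has genus at least $1$, the genus-$1$ components are precisely the unramified ones, and the remaining components have genera $g_j\ge 2$ subject only to $\sum_j(2g_j-2)=2g-2$. Already for $g=3$ (and $\underline{\mu}=\emptyset$) a cover may consist of two ramified components of genus $2$ each, and for nonempty $\underline{\mu}$ the nontrivial parts of the padded $\mu^i$ may likewise be spread over several components. Consequently, dividing by $\prod_{n\ge1}(1-q^n)^{-1}$ removes only the unramified components; what you obtain is the generating series of covers with no unramified component, which is strictly larger than $\sum_d N^{G,\circ}_{g,d}(\underline{\mu})q^d$ in general. (Your argument does happen to be complete for $g=2$ with $\underline{\mu}=\emptyset$, which is why it looks plausible.)

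The repair is standard but has to be stated: after stripping unramified components, the fully connected series is obtained by inclusion--exclusion (the exponential formula) over the ways of distributing the labelled simple branch points, the parts of the $\mu^i$ and the degree among connected components; each term is a finite product of $q$-brackets of shifted symmetric functions, and since quasimodular forms form a graded ring, quasimodularity survives. This step also feeds into the second issue you flag yourself: the weight bound $6g-6+\sum_i(4\ell(\mu^i)-2|\mu^i|)$ is never verified in your proposal, and after the inclusion--exclusion one must additionally check that the products appearing there do not exceed the claimed weight. Since the weight estimate is where the quantitative content of the theorem lives, the proof is not complete without it.
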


\begin{remark}
    Recall from \cref{rem-elliptic} that we add $1$s to the partitions in $\underline{\mu}$ when increasing $d$.
\end{remark}

\begin{proof}
    The result follows the same arguments as in \cite[Section 4]{Hahn_2022}, since we have already observed in \cref{equ:shiftsymhur} that $N_{g,d}^{G,\circ}(\underline{\mu})$ is an expression in terms of shifted symmetric functions.
\end{proof}

\subsection{Refined Quasimodularity}
Having shown that elliptic weighted Hurwitz numbers provide quasimodular forms, we now aim to refine this statement using tropical combinatorics. Tropical refinements of such quasimodularity statements have previously appeared for classical Hurwitz numbers in\cite{B_hm_2015,tropmirror2022,goujard2017countingfeynmanlikegraphsquasimodularity} and for monotone/strictly monotone Hurwitz numbers in \cite{Hahn_2022}. We focus on the special case $N_{g,d}^{G,\circ}$ with only simple ramification.

To begin with, we need to derive a tropical correspondence theorem for weighted elliptic Hurwitz numbers. We begin with the following definition of weighted tropical elliptic covers.

\begin{definition}
    Fix a weight generating  function $G$, a non-negative integer $g \geq 0$, an orientation on $E_{trop}$ and points $p_0,p_1, \dots, p_{2g-2}$ of $E_{trop}$ such that $p_0, p_1, \dots, p_{2g-2}$ are ordered according to the orientation. Let $\pi : \Gamma \to E_{trop}$ be a tropical cover of genus $g$ and degree $d$ such that $\pi^{-1}(p_0)$ contains no vertices, and $\Gamma$ has at most $2g-2$ vertices $v_1, \dots, v_l, \ l \leq 2g-2$. We force $v_i \in \pi^{-1}(p_i)$ for $i = 1, \dots, l$. We require $\lambda_i = \operatorname{val}(v_i) +2g(v_i) -2$ and obtain a composition $\lambda(\pi) = (\lambda_1, \dots , \lambda_l)$. If $|\lambda| = 2g-2$ we call $\pi$ an admissible elliptic tropical cover, of type $(g,d)$. We denote $\Gamma(E_{trop}; g, d)$ for the set of all admissible elliptic tropical covers of type $(g,d)$.

We associate a multiplicity to each cover $\pi \in \Gamma(E_{trop}; g, d)$, 
\[
\operatorname{mult}_G(\pi) = \frac{1}{
|\aut (\pi)|} \frac{1}{\ell(\lambda(\pi))!} \prod_{v \in V(\Gamma)} m_{G,v} \prod_{e \in E(\Gamma)} \omega_e
\]
with the vertex multiplicity precisely as in \Cref{thm-tropcorr} at each vertex (with $\mathbf{x}^+$ being the right hand weights, and $\mathbf{x}^-$ the left hand weights with respect to the orientation of $E_{trop}$).
\end{definition}

We then obtain the following tropical correspondence theorem for weighted elliptic Hurwitz numbers.

\begin{theorem}\label{thm-ellcorr}
Fix $g \geq 2, d >0$. Then;
\[
N_{g,d}^{G,\circ}  = \sum_{\pi \in \Gamma(E_{trop};g,d)} \operatorname{mult}_G(\pi).
\]
\end{theorem}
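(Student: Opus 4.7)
The plan is to follow the ``bosonification is tropicalisation'' strategy employed in \Cref{thm-tropcorr}, but to replace the vacuum-to-vacuum matrix element by a trace on the subspace $\mathcal V_{d}\subset\mathcal V_0$ spanned by partitions of $d$. Heuristically, taking a trace closes the interval $\mathbb P^1_{\text{trop}}$ into the circle $E_{\text{trop}}$ at the level of Wick contractions, producing cycles in the underlying graphs and thereby encoding the first Betti number contribution to the genus.

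I would start by recasting $N^{G,\bullet}_{g,d}$ as such a trace: combining \eqref{equ:shiftsymhur} with empty ramification data and the dictionary between symmetric functions of contents and diagonal Fock-space operators (cf.\ \cite{Hahn_2022}), together with the expansion $\hat A=\sum_k\beta^k A_k\gb_{k+1}$ of \cite{alexandrov2018fermionic}, yields the elliptic analogue of \Cref{actuallybosonic},
\[
N^{G,\bullet}_{g,d} \;=\; \sum_{\lambda \vdash r}\frac{1}{\ell(\lambda)!}\,\mathrm{tr}_{\mathcal V_d}\!\Big(\prod_{i=1}^{\ell(\lambda)} A_{\lambda_i}\, \gb_{\lambda_i+1}\Big);
\]
the passage to $N^{G,\circ}_{g,d}$ is standard. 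Expanding each $\gb_{\lambda_i+1}$ via \Cref{gbasgromovwitten} then replaces every operator cluster with a product of $\alpha$-operators weighted by precisely the local Gromov--Witten brackets that comprise the vertex multiplicity $m_{G,v}$ of \Cref{thm-tropcorr}.

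The central step is to apply the trace analogue of Wick's theorem for the bosonic Fock space, in the form already used to prove tropical correspondences for classical, completed-cycles, and monotone elliptic Hurwitz numbers in \cite{B_hm_2015,tropmirror2022,Hahn_2022}. Relative to the vacuum-to-vacuum version invoked in the proof of \Cref{thm-tropcorr}, the cyclicity of the trace allows ``wrap-around'' contractions between $\alpha$-operators at arbitrary positions of the cyclic word; these are exactly the contractions that close the chain into a circle and create the loops of the resulting graph. Each term of the expansion is indexed by a graph on $E_{\text{trop}}$ with one inner vertex per $\gb_{\lambda_i+1}$, edge weight equal to the index of the contracted $\alpha$-operator pair (balancing follows from $[\alpha_k,\alpha_l]=k\delta_{k+l,0}$), and total degree $d$ enforced by the restriction of the trace to $\mathcal V_d$.

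Finally I would identify these graphs with admissible elliptic tropical covers $\pi\in\Gamma(E_{\text{trop}};g,d)$ as in the last part of the proof of \Cref{thm-tropcorr}: the relation $\lambda_i=\mathrm{val}(v_i)+2g(v_i)-2$ and the split $g(v_i)=g_1^i+g_2^i$ of each vertex genus are formally identical, while the genus identity $g=\beta^1(\Gamma)+\sum_i g(v_i)$ follows from the same Euler-characteristic computation, simplified by the absence of leaves. Bundling the local Gromov--Witten factors at each vertex into $m_{G,v}$ and each contraction into $\omega_e$ produces $\mathrm{mult}_G(\pi)$ on the nose. The main obstacle is the combinatorial bookkeeping in the trace--Wick step: one must check that the prefactors $1/\ell(\lambda)!$ and $1/|\mathrm{Aut}(\pi)|$ emerge correctly, and, crucially, that the cyclic contractions account for the full $\beta^1(\Gamma)$ contribution to $g$ without overcounting. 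Since both points have been verified in the unweighted settings of the references above, the novelty is only the $A_{\lambda_i}$-decoration inherited from $G$ and its compatibility with the vertex multiplicity of \Cref{thm-tropcorr}.
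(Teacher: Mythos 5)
Your proposal is correct in outline, but it takes a genuinely different route from the paper. The paper does not re-run the Fock-space/Wick machinery on the elliptic target at all: it cuts an admissible elliptic cover over the fibre of $p_0$ to produce a tropical cover contributing to $H_g^{G,\bullet}(\mu,\mu)$, i.e.\ it reduces to \cref{thm-tropcorr}, and on the factorisation side it rewrites $\tau_r\cdots\tau_1=[\alpha,\beta]$ as a genus-$0$ factorisation $\tau_r\cdots\tau_1\sigma_1=\sigma_2$ together with a conjugation $\alpha\sigma_2\alpha^{-1}=\sigma_1$. The only genuinely elliptic input is the statement that the number of regluings of the cut tropical cover matches the number of admissible choices of $\alpha$ --- which the paper imports from \cite{Hahn_2022} and explicitly flags as the delicate point, since there is no direct bijection between factorisations and tropical covers --- plus the observation that the $G$-weight of the transposition tuple is insensitive to the cutting. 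Your route instead works directly on $E_{trop}$: you represent $N^{G,\bullet}_{g,d}$ as a trace of the (diagonal) operators $A_{\lambda_i}\gb_{\lambda_i+1}$ over $\mathcal V_d$ --- legitimate, since \cref{equ:shiftsymhur} with $\underline{\mu}=\emptyset$ is exactly such a trace --- expand via \cref{gbasgromovwitten}, and apply the cyclic version of Wick's theorem so that wrap-around contractions create the loops of $\Gamma$. This buys independence from the regluing-count lemma of \cite{Hahn_2022}, at the price of having to carry out the trace--Wick bookkeeping that you currently gloss over: the trace of a product of $\alpha$-clusters is only accessible through $[q^d]\operatorname{tr}\!\left(q^{E}\,\cdots\right)$ with $E=\mathcal F_1$ the energy operator (one cannot literally restrict non-diagonal $\alpha$-monomials to $\mathcal V_d$), each contracted pair contributes a geometric series in $q$ recording the winding of the corresponding edge around $E_{trop}$ (this is precisely the $\sum_{w\mid a}$ structure visible in the propagators of \cref{thm-feynman}), and the passage to connected invariants must strip off the partition-function factor $\prod_n(1-q^n)^{-1}$. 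These points are established in \cite{B_hm_2015,cavalieri2021counting,tropmirror2022}, and your genus and prefactor checks are the same Euler-characteristic computation as in \cref{thm-tropcorr}, so your plan goes through; it is simply a different, and arguably more self-contained, proof than the one the paper gives.
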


\begin{proof}
    The special case $G(z)=\frac{1}{1-z}$ was proved in \cite[Theorem 5.3]{Hahn_2022}. The same argument works here. The main idea is to cut up the tropical cover over the fibre $p_0$ to obtain a tropical cover contributing to $H_g^{G,\bullet}(\mu,\mu)$. On the factorisation side, this corresponds to rewriting $(\tau_1,\dots,\tau_r,\alpha,\beta)$  with
    \begin{equation}
        \tau_r\cdots\tau_1=[\alpha,\beta]
    \end{equation}
    as
    \begin{equation}
        \tau_r\cdots\tau_1\beta=\alpha\beta\alpha^{-1}.
    \end{equation}
    Thus, we may instead count tuples $(\tau_1,\dots,\tau_r,\sigma_1,\sigma_2,\alpha)$ with 
    \begin{equation}
        \tau_r\cdots\tau_1\sigma=\sigma_2\quad\textrm{with}\quad \alpha\sigma_2\alpha^{-1}=\sigma_1.
    \end{equation}

    But the subtuple $(\tau_1,\dots,\tau_r,\sigma_1,\sigma_2)$ contributes to the weighted double Hurwitz numbers $H_g^{G,\bullet}(\mu,\mu)$ as well. What remains to show that the number of regluings on the tropical side is the same as the number of choices of $\alpha$ on the factorisation side. The difficulty lies in the fact that there is no direct correspondence between factorisations and tropical covers. Still, this was proved to be true in \cite{Hahn_2022}.
    
    Finally, the $G$--weight contribution of $(\tau_1,\dots,\tau_r)$ is the same for $N_{g,d}^{G}$ and $H_g^G(\mu,\mu)$. Thus, the proof immediately generalises to the case of weighted elliptic Hurwitz numbers.
\end{proof}

In order to explore quasimodularity, we need the following definition.

\begin{definition}
    We fix a combinatorial type $C$ of a tropical curve $\Gamma$ with $l \defeq |V (\Gamma) | \leq 2g(\Gamma) -2$. We further fix an orientation on $E_{trop}$ and a linear ordering $\Omega$ on the vertices of $C$. Denote by $v_i$ the $i$-th vertex according to $\Omega$. We choose points $p_1, \dots, p_{2g-2}$ on $E_{trop}$ linearly ordered with respect to the given orientation of $E_{trop}$. Then fix a series of integers $\mathbf{g}' = (g_1, \dots, g_l)$.

Denote by $\Gamma(C; \Omega, g, \mathbf{g}')$ the set of all covers $\pi \in \Gamma(E_{trop};g,d)$ for some $d \in \N$ that satisfies;
\begin{enumerate}[label=(\roman*)]
    \item for $\pi : \Gamma \to E_{trop}$, $\Gamma$ has combinatorial type $C$,
    \item $\pi(v_i) = p_i$,
    $g(v_i) = g_i$.
\end{enumerate}
We associate a generating series to each combinatorial type $C$;
\[
I_{G,\mathbf{g}'}^{C, \Omega} \defeq \sum_{\pi \in \Gamma(C; \Omega , g, \mathbf{g}')} \operatorname{mult}_G(\pi) q^{deg(\pi)}.
\]
\end{definition}
The correspondence theorem of this section then gives that
\begin{equation}
N_{d,g}^{G,\circ} = \sum_{C, \Omega, \mathbf{g}'}  I_{G,\mathbf{g}'}^{C, \Omega}
\end{equation}
where we sum over all combinatorial types $C$ on at most $l \leq 2g-2$ vertices, order $\Omega$ on $C$ and tuples $\mathbf{g}' = (g_1, \dots, g_l)$.

We are finally ready to prove the refined quasimodularity statement, i.e. that the contribution of each combinatorial type with fixed order is a quasimodular form.

\begin{theorem}\label{thm-refinquasi}
For $|\mathbf{g}'| \geq 2$, the series $I_{G,\mathbf{g}'}^{C, \Omega}
$ is a quasimodular form of mixed weight $\leq 2\left( \sum_{i=1}^l g_i + |E(C)|\right)$.    
\end{theorem}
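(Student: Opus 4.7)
The plan is to follow the refined-quasimodularity strategy developed in \cite{B_hm_2015,Hahn_2022,tropmirror2022} for classical and (strictly) monotone tropical elliptic Hurwitz numbers, and to adapt it to the weighted setting via \cref{thm-ellcorr}. First, I would observe that once $C$, $\Omega$ and $\mathbf{g}'=(g_1,\dots,g_l)$ are fixed, each vertex multiplicity $m_{G,v_i}$ (cf.\ \eqref{importantvertmult}) depends polynomially on the weights of the adjacent edges: by \cref{polynomialgw}, the Gromov--Witten ingredient equals the $z^{2g_1^i}$-coefficient of $\mathcal{S}(z)^{-1}\prod_j\mathcal{S}(\mathbf{x}^{\pm}_jz)$, and since $\mathcal{S}(xz)$ is an even power series in $xz$ starting with $1$, this coefficient is a polynomial of total $x$-degree $2g_1^i\leq 2g_i$. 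Consequently the integrand $P(\omega)\defeq\prod_vm_{G,v}\prod_e\omega_e$ is a polynomial in $\omega\in\mathbb{Z}_{>0}^{E(C)}$ of total degree at most $2\sum_ig_i+|E(C)|$, with coefficients that depend on $G$ only through the scalars $A_{\lambda_i}\lambda_i!\langle\tau_{2g_2^i-2}\rangle^{\rsph,\circ}_{g_2^i}$.

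Next, I would parametrise the lattice of balanced edge weights. Choosing a spanning tree $T\subset C$ and orienting the $b_1(C)$ fundamental cycles, the balancing constraints at each inner vertex become a linear system that expresses the tree weights as integer linear combinations of the cycle weights and of the degree $d=\deg(\pi)$, which is itself a linear function of $\omega$. The series then rewrites as
\[
I_{G,\mathbf{g}'}^{C,\Omega}=\sum_{d\geq 1}q^d\sum_{\omega}P(\omega),
\]
the inner sum ranging over positive integer cycle weights together with a reference parameter fixing $d$, subject to positivity of all tree weights.

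Finally, I would identify this with a Feynman/propagator graph sum on $E_{\mathrm{trop}}$, as in \cite[Section 6]{Hahn_2022} and \cite{tropmirror2022}. Expanding $P(\omega)$ into monomials and re-summing, each contribution factorises along edges into elementary sums of the shape $\sum_{k,m\geq 1}k^{j}q^{km}$, which are derivatives of Eisenstein series $G_{j+1}$ and hence quasimodular of weight $j+1$ in the sense of \cite{dijkgraaf1995mirror}. A standard bookkeeping argument matches polynomial edge-degrees to the weights of these Eisenstein contributions and shows that the accumulated mixed weight is bounded by $2\sum_ig_i+2|E(C)|=2(\sum_ig_i+|E(C)|)$, which is precisely the claimed bound.

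The principal obstacle lies in the middle step: the polytope of positive balanced weights is not a direct product, so a naive edge-by-edge factorisation fails, and one must decompose the sum along the cycles of $C$ in such a way that positivity of every tree edge reduces to a product of half-space conditions compatible with the Eisenstein expansion. This is where the vertex ordering $\Omega$ is essential---it provides a coherent orientation of $\Gamma$ that linearises the positivity constraints---and where the hypothesis $|\mathbf{g}'|\geq 2$ enters, ruling out low-degree degenerate configurations whose Eisenstein contributions would escape the quasimodular algebra $\widetilde{M}=\mathbb{Q}[P,Q,R]$. Once this combinatorial reduction is in place, the quasimodularity and the weight estimate follow, by the same argument as in \cite{Hahn_2022}, from the classical propagator computation together with the Bloch--Okounkov theorem applied to the shifted-symmetric expression \eqref{equ:shiftsymhur}.
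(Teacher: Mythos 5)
Your proposal identifies exactly the same key verification as the paper: once the combinatorial type, order and genus data are fixed, the local vertex multiplicities $m_{G,v_i}$ are polynomials in the adjacent edge weights, by \cref{polynomialgw}, and the $G$--dependence enters only through the scalar prefactors $A_{\lambda_i}\lambda_i!\langle\tau_{2g_2^i-2}(\omega)\rangle^{\rsph,\circ}_{g_2^i}$. That is the entire content of the paper's proof: it then simply invokes Theorem~6.2 of \cite{goujard2017countingfeynmanlikegraphsquasimodularity}, which states that the generating series attached to a fixed combinatorial type with a fixed vertex order is quasimodular whenever the cover multiplicity is polynomial in the edge weights, with the weight bound read off from the polynomial degrees. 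So at the level of strategy you and the paper agree.

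The difference is that you attempt to rederive the black--box theorem rather than cite it, and your rederivation is not complete. You yourself flag the ``principal obstacle'' --- that the lattice of positive balanced edge weights does not factor as a product over edges, so the edge-by-edge Eisenstein resummation $\sum_{k,m\ge 1}k^jq^{km}$ does not apply directly --- and you do not actually resolve it; you assert that the ordering $\Omega$ ``linearises the positivity constraints'' without giving the decomposition. This is precisely the nontrivial combinatorial content of the Goujard--M\"oller argument (cutting the cover along the fibre of $p_0$ and organising the resulting local surgeries so that each piece contributes a genuine quasimodular bracket), and leaving it as an acknowledged gap means your write-up does not constitute a proof on its own, though it would be fine if you closed the loop by citing that result as the paper does. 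Two smaller points: the closing appeal to the Bloch--Okounkov theorem via \eqref{equ:shiftsymhur} is the mechanism behind the \emph{unrefined} statement \cref{thm-quasimod}; it does not see individual combinatorial types and so cannot deliver the refined claim --- invoking it here conflates the two theorems. And your explanation of the role of $|\mathbf{g}'|\ge 2$ (``ruling out low-degree degenerate configurations'') is a plausible guess but is not substantiated. Your degree bookkeeping, $\deg P(\omega)\le 2\sum_ig_i+|E(C)|$ and hence accumulated weight $\le 2\sum_ig_i+2|E(C)|$, does match the claimed bound.
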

\begin{proof}
    This is due to \cite[Theorem 6.2]{goujard2017countingfeynmanlikegraphsquasimodularity} 
    where it is proved that the generating series associated to a tropical cover, targetting a curve of fixed combinatorial type, order, and fixed ramification profile whenever the multiplicity of the cover is a polynomial in the edge weights. We need to verify that local vertex multiplicities are polynomial, which is true due to \cref{polynomialgw}.
\end{proof}

We can recover our original quasimodularity statement as a corollary.

\begin{corollary}\label{thirdmainthingsomehow}
The generating series (for $g \geq 2$)
\[
\sum_{d \geq 1} N_{g,d}^{G,\circ} q^d
\]
is a quasimodular form of mixed weight $\leq 6g-6$.
\end{corollary}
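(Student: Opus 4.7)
The plan is to deduce this corollary directly from the tropical correspondence theorem \cref{thm-ellcorr} combined with the refined quasimodularity statement \cref{thm-refinquasi}. By \cref{thm-ellcorr} we have $N_{g,d}^{G,\circ} = \sum_{\pi \in \Gamma(E_{trop};g,d)} \operatorname{mult}_G(\pi)$, and stratifying by combinatorial type $C$, compatible vertex ordering $\Omega$ and local genus assignment $\mathbf{g}' = (g_1,\dots,g_l)$ gives
\[ \sum_{d \geq 1} N_{g,d}^{G,\circ}\, q^d \;=\; \sum_{C,\Omega,\mathbf{g}'} I_{G,\mathbf{g}'}^{C,\Omega}, \]
as already noted after the definition of $I_{G,\mathbf{g}'}^{C,\Omega}$.

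First I would observe that the outer sum is finite. Any admissible combinatorial type satisfies $l \leq 2g-2$, and the constraint $\sum_i (\operatorname{val}(v_i) + 2g_i - 2) = 2g-2$ simultaneously bounds the genera $g_i$ and the valences $\operatorname{val}(v_i)$, leaving only finitely many types $C$, orderings $\Omega$ and tuples $\mathbf{g}'$ to consider. It therefore suffices to apply \cref{thm-refinquasi} to each summand and to give a uniform bound $6g-6$ on the mixed weight.

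The remaining step is the bound on $2\bigl(\sum_i g_i + |E(C)|\bigr)$. Since we are in the simply ramified elliptic setting, the underlying graph $\Gamma$ has no leaves, so $|V(C)| = l$ and its Euler characteristic reads $l - |E(C)| = 1 - b_1(C)$, giving $|E(C)| = l - 1 + b_1(C)$. Combining with the global genus relation $g = b_1(C) + \sum_i g_i$ yields
\[ \sum_i g_i + |E(C)| \;=\; g - 1 + l \;\leq\; g - 1 + (2g-2) \;=\; 3g-3, \]
so each $I_{G,\mathbf{g}'}^{C,\Omega}$ is quasimodular of mixed weight $\leq 6g-6$, and a finite sum of such forms is again quasimodular of mixed weight $\leq 6g-6$.

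I do not anticipate a serious obstacle beyond correctly bookkeeping the stratification: the tropical correspondence packages the generating function as a finite sum over finite combinatorial data, the refined quasimodularity of \cref{thm-refinquasi} does the analytic work on each stratum, and the weight bound reduces to the short Euler characteristic computation above.
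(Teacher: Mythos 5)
Your proposal is correct and follows essentially the same route as the paper: reduce to the refined quasimodularity of each $I_{G,\mathbf{g}'}^{C,\Omega}$ via \cref{thm-refinquasi} and then bound $2\bigl(\sum_i g_i + |E(C)|\bigr)$ by $2g-2+2l \leq 6g-6$ using $l \leq 2g-2$. The only (immaterial) difference is that you derive $\sum_i g_i + |E(C)| = g-1+l$ from the Euler characteristic and the global genus relation $g = b_1(C) + \sum_i g_i$, whereas the paper gets the same identity from the handshake lemma together with $\mathrm{val}(v_i) = \lambda_i - 2g_i + 2$ and $\sum_i \lambda_i = 2g-2$; your explicit remark on the finiteness of the stratification is a small point the paper leaves implicit.
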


\begin{proof}
    The only thing we need to prove here is the claimed weight. For this, consider $2\left( \sum_{i=1}^l g_i + |E(C)|\right)$, where by the handshake lemma, we know $2|E(C)|= \sum_{i=1}^l \val (v_i)$, summing over all vertices of $C$. By construction, $\val (v_i) = \lambda_i - 2 g_i +2 $ so that;
    \begin{align}
    2\left( \sum_{i=1}^l g_i + |E(C)|\right) &= 2 \sum_{i=1}^l g_i + \sum_{i=1}^r (\lambda_i  - 2g_i +2) \\
    &= \sum_{i=1}^r \lambda_i + 2l = 2g-2 +2l \leq 2g-2 +2(2g-2) = 6g-6,
    \end{align}
    coming from the relations $\sum_{i=1}^r \lambda_i = 2g-2$ and $d \leq 2g-2$. This yields the desired result.
\end{proof}

\subsection{Feynman diagrams}
In this section, we express $N_{g,d}^{G,\circ}$ in terms of Feynman diagrams. 

\begin{definition}
    Fix $n>1$. A Feynman diagram is a (non-metrised) graph $\Gamma$ without ends with $n$ vertices labeled $x_1,\dots,x_n$ and $r$ edges labeled $q_1,\dots,q_r$. We will assume that the $q_1,\dots,q_s$ are loop edges, whereas $q_{s+1},\dots, q_r$ are not.
\end{definition}

Next, we define a propagators and Feynman integrals.

\begin{definition}
    Fix a non--negative integer $g$. Let $\Gamma$ be a Feynman diagram with $n$ vertices, $\Omega$ an order on the vertices and choose $\lambda=(\lambda_1,\dots,\lambda_n)$ a partition of $2g-2$. Moreover, define numbers $g_i$ as $\lambda_i=2g_i-2+\mathrm{val}(x_i)$.
    
    For $k>s$ denote the edges adjacent to the non--loop edge $q_k$ by $x_{k^1}$ and $x_{k^2}$, where $k_1<k_2$ in $\Omega$. We then define a \textbf{propagator} introducing new variables $z_1,\dots,z_n$.

    \begin{align}
        P\left(\frac{x_{k^1}}{x_{k^2}},q_k\right)=&\sum_{w=1}^\infty\mathcal{S}(wz_{k^1})\mathcal{S}(wz_{k^2})w\left(\frac{x_{k^1}}{x_{k^2}}\right)^w\\
        +&\sum_{a=0}^\infty\left(\sum_{w\mid a}\mathcal{S}(wz_{k^1})\mathcal{S}(wz_{k^2})w\left(\left(\frac{x_{k^1}}{x_{k^2}}\right)^w+\left(\frac{x_{k^2}}{x_{k^1}}\right)^w\right)\right)q_k^a.
    \end{align}
    For loop edges $q_k$ with vertex $x_{k^1}$, we define
    \begin{equation}
        P^{\textrm{loop}}(q_k)=\sum_{a=1}^\infty\left(\sum_{w\mid a}\mathcal{S}(wz_{k^1})^2w\right)q_k^a.
    \end{equation}
    We define the refined Feynman integral associated to $\Gamma$, $\Omega$ and $\lambda$ as
    \begin{equation}
        I_{\Gamma,\Omega,\lambda}(q_1,\dots,q_k)=[z_1^{2g_1}\cdots z_n^{2g_n}][x_1^{0}\cdots x_n^{0}]\prod_{i=1}^n\frac{1}{\mathcal{S}(z_i)^2}\prod_{i=1}^sP^{\textrm{loop}}(q_k)\prod_{i=s+1}^nP\left(\frac{x_{k^1}}{x_{k^2}},q_k\right)
    \end{equation}
    and the Feynman integral as
    \begin{equation}
        I_{\Gamma,\Omega,\lambda}(q)=[z_1^{2g_1}\cdots z_n^{2g_n}][x_1^{0}\cdots x_n^{0}]\prod_{i=1}^n\frac{1}{\mathcal{S}(z_i)^2}\prod_{i=1}^sP^{\textrm{loop}}(q)\prod_{i=s+1}^nP\left(\frac{x_{k^1}}{x_{k^2}},q\right)
    \end{equation}
    Finally, we set
    \begin{equation}
        I_{\Gamma,\lambda}(q_1,\dots,q_k)=\sum_{\Omega}I_{\Gamma,\Omega,\lambda}(q_1,\dots,q_k)
    \end{equation}
    and
    \begin{equation}
        I_{\Gamma,\lambda}(q)=\sum_{\Omega}I_{\Gamma,\Omega,\lambda}(q),
    \end{equation}
    where each sum is over all orders on the vertices of $\Gamma$.   
\end{definition}

The main result of this subsection is as follows.

\begin{theorem}
\label{thm-feynman}
    Let $g\ge2$ be an integer, then we have

    \begin{equation}
        \sum N_{g,d}^{G,\circ}q^d=\sum_{(\Gamma,\lambda)}\frac{\prod_i \lambda_i!A_{\lambda_i}}{|(\mathrm{Aut}(\Gamma),\lambda)|}I_{\Gamma,\lambda}(q),
    \end{equation}
    where $|(\mathrm{Aut}(\Gamma),\lambda)|$ denotes the number of automorphisms of $\Gamma$ as a graph respecting the labelling of the vertices by $\lambda$.
\end{theorem}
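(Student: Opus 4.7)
The plan is to combine the tropical correspondence theorem for weighted elliptic Hurwitz numbers with an edge-by-edge factorisation of the sum, following the strategy employed in \cite{Hahn_2022} for the monotone case. By \cref{thm-ellcorr},
\[
\sum_d N_{g,d}^{G,\circ}\, q^d \;=\; \sum_{\pi} \operatorname{mult}_G(\pi)\, q^{\deg(\pi)},
\]
where the sum runs over all admissible elliptic tropical covers $\pi:\Gamma\to E_{\textit{trop}}$ of genus $g$ and arbitrary degree. I would organise this sum by the combinatorial data $(\Gamma,\lambda,\Omega,\mathbf{g}')$ consisting of the underlying graph (without edge lengths), the partition $\lambda$ with $\lambda_i=\mathrm{val}(v_i)+2g(v_i)-2$, the linear order $\Omega$ on the vertices induced by the chosen orientation of $E_{\textit{trop}}$, and the tuple of vertex genera. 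For fixed combinatorial data, a cover is then determined by an assignment of weights $w_e\in\mathbb{N}$ to the edges of $\Gamma$ (subject to the balancing condition at each vertex) together with the discrete ``wrapping'' data specifying, for each edge, how many times it covers the arc of $E_{\textit{trop}}$ containing $p_0$.

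Next, I would use \cref{polynomialgw} to rewrite the vertex multiplicity
\[
m_{G,v_i} \;=\; \lambda_i!\,A_{\lambda_i}\sum_{g_1^i+g_2^i=g_i} \bigl\langle\tau_{2g_2^i-2}(\omega)\bigr\rangle_{g_2^i}^{\mathbb{P}^1,\circ}\,[z_i^{2g_1^i}]\,\frac{\prod_{e\sim v_i}\mathcal{S}(w_e z_i)}{\mathcal{S}(z_i)},
\]
and observe that, when the sum over $g_2^i$ is packaged with the remaining factor of $\mathcal{S}(z_i)^{-1}$, one obtains the required $\mathcal{S}(z_i)^{-2}$ prefactor of the Feynman integral (using the identity $\sum_{g\ge 0}\langle\tau_{2g-2}(\omega)\rangle_g^{\mathbb{P}^1,\circ}z^{2g}=\mathcal{S}(z)^{-1}$ up to the standard sign). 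The prefactor $\prod_i \lambda_i!\,A_{\lambda_i}$ thus detaches cleanly and matches the one in the statement.

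The main step is then to factor the remaining sum edge by edge. For a non-loop edge $e_k$ connecting $v_{k^1}<v_{k^2}$ (in $\Omega$) of source-weight $w$ that wraps $a$ times around $p_0$, the contribution consists of: the edge-weight $w$ (from $\prod_e\omega_e$ in $\operatorname{mult}_G$); the two $\mathcal{S}(wz_{k^1})\mathcal{S}(wz_{k^2})$-factors coming from the $\mathcal{S}(w_e z)$ factors at its two endpoints; an $x$-monomial $(x_{k^1}/x_{k^2})^{\pm w}$ recording the direction of the crossing; and $q^{a}$ recording the total contribution to the degree. Summing over $w$ and $a\ge 0$, with the constraint $w\mid a$ (since an edge of weight $w$ wrapping $a$ times requires $a$ to be a multiple of $w$), reproduces exactly the propagator $P(x_{k^1}/x_{k^2},q_k)$ as defined in the statement, the two summands distinguishing the $a=0$ case from $a\ge 1$. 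Loop edges give rise to $P^{\mathrm{loop}}(q_k)$ by the analogous enumeration restricted to a single vertex. Enforcing the balancing condition at each vertex via the constant-term extraction $[x_1^0\cdots x_n^0]$ and specialising $q_k=q$ then assembles the various edge propagators into $I_{\Gamma,\Omega,\lambda}(q)$.

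Finally, summing over the orderings $\Omega$ converts $I_{\Gamma,\Omega,\lambda}$ into $I_{\Gamma,\lambda}$, and the surviving combinatorial automorphisms combine into $|(\mathrm{Aut}(\Gamma),\lambda)|$, producing the claimed formula. The main obstacle I anticipate is the careful bookkeeping of the wrapping combinatorics, especially for loop edges and for graphs with non-trivial automorphisms: one has to match the divisor sums $\sum_{w\mid a}$ in $P,P^{\mathrm{loop}}$ with the count of ways an edge of a given weight can wrap the circle, while ensuring that automorphisms of the cover descend correctly to $|(\mathrm{Aut}(\Gamma),\lambda)|$. This step is essentially the same as in \cite{Hahn_2022} for the monotone case, and since the only change from that setting is the replacement of the monotone vertex multiplicity by the $G$-weighted one -- which remains polynomial in the adjacent edge weights thanks to \cref{polynomialgw} -- the argument there carries over to the present generality without essential modification.
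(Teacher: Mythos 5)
Your proposal is correct and follows essentially the same route as the paper: both arguments establish a multiplicity-preserving bijection between admissible elliptic tropical covers (via \cref{thm-ellcorr}) and the monomial summands of the Feynman integrals, using \cref{polynomialgw} together with the expansion $\mathcal{S}(z)^{-1}=\sum_g\langle\tau_{2g-2}(\omega)\rangle_g^{\mathbb{P}^1,\circ}z^{2g}$ to match vertex multiplicities, the constant-term extraction in the $x_i$ to encode balancing, and the divisor sums in the propagators to encode the wrapping over $p_0$. The only difference is the direction of traversal — you go from covers to integral summands, the paper expands the integrals and reconstructs the covers — which is immaterial.
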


\begin{proof}
   Expanding $I_{\Gamma,\Omega,\lambda}(q_1,\dots,q_k)$, a summand looks like
   \begin{equation}
      \frac{\prod_{k=1}^s\mathcal{S}(w_kz_{k^1})^2\prod_{k=s+1}^r\mathcal{S}(w_kz_{k^1})\mathcal{S}(w_kz_{k^2})}{\prod_{k=1}^{n}\mathcal{S}(z_i)^2}\prod_{k=1}^rw_k\left(\frac{x_{k^i}}{x_{k^j}}\right)^{w_k}q_k^{a_k},
   \end{equation}

    We now associate a tropical cover to such a summand. We start with the Feynman graph $\Gamma$ and the order $\Omega$. We map the vertex $x_i$ of $\Gamma$ to the $i$--th marked point $p_i$ on $E_{\textrm{trop}}$. The summand encodes how to map the edges, as well as the multiplicity of the constructed tropical cover.
    
    For
    \begin{equation}
        w_k\left(\frac{x_{k^1}}{x_{k^2}}\right)^{w_k}q_k^{a_k},
    \end{equation}
    we first assume $a_k=0$. In that case, we give the edge $q_k$ weight $w_k$ and simply map it to the interval in $E_{\textrm{trop}}$ from $p_{k^i}$ to $p_{k^j}$. If $a_k>0$, we instead continue the edge so that it passes through the fibre over $p_0$ exactly $a_k$ many times before it connects to $x_{k^j}$.
    Proceeding like this for all edges, we obtain a map $\Gamma\to E_{\textrm{trop}}$ with prescribed edge weights. In order for it to be a tropical cover, have to check the balancing condition. Indeed, we have for each vertex $x_k$ with outgoing edge $q_i$, $x_k$ and $q_i$ appear in the product with $x_k^{\omega(q_i)}$ in the numerator. On the other hand, for an incoming edge $q_i$, we have that $x_k^{\omega(q_i)}$ appears in the denominator. Thus, taking the coefficient of $x_k^0$ means exactly that the sum of incoming edges equal the sum of out going edges. This is the balancing condition.

    Finally, we have to check the multiplicities. The factor $w_k$ takes care of the edge weights. Recall the vertex multiplicity 
    \begin{equation}
        \lambda_i! | \hspace{0em}\aut(\mathbf{x}^+) \hspace{0em}| | \hspace{0em} \aut(\mathbf{x}^-) \hspace{0em}| A_{\lambda_i} \Big< \tau_{2g_2^i -2}(\omega)  \Big>_{g_2^i}^{\rsph, \circ} \Big< \mathbf{x}^{+}  \mid \tau_{2g_1^i -2+ \ell(\mathbf{x}^{+}) + \ell(\mathbf{x}^{-})}(\omega) \hspace{0em}\mid \hspace{0em} \mathbf{x}^{-}  \hspace{0em} \Big>_{g_1^i}^{\rsph, \circ}.
    \end{equation}

    The factors $A_{\lambda_i}\lambda_i!$ are global factors in the statement. The only remaining aspect are the automorphisms, as well as the split of $g_i$ into $g_i=g_1^i+g_2^i$. This arises as follows. We observe that 

    \begin{equation}
        \frac{1}{\mathcal{S}(z)}=\sum_g\Big< \tau_{2g -2}(\omega)  \Big>_{g_2^i}^{\rsph, \circ}z^{2g},
    \end{equation}

    while we have already seen in \cref{polynomialgw} that

    \begin{equation}
        | \hspace{0em}\aut(\mathbf{x}^+) \hspace{0em}| | \hspace{0em} \aut(\mathbf{x}^-) \hspace{0em}|\Big< \mathbf{x}^{+}  \mid \tau_{2g -2+ \ell(\mathbf{x}^{+}) + \ell(\mathbf{x}^{-})}(\omega) \hspace{0em}\mid \hspace{0em} \mathbf{x}^{-}  \hspace{0em} \Big>_{g}^{\rsph, \circ}=[z^{2g}]\frac{\prod\mathcal{S}(x_iz)}{\mathcal{S}(z)}.
    \end{equation}
For each vertex $x_i$, we have a product

    \begin{equation}
        \frac{\prod_k\mathcal{S}(\omega(q_k)z_i)}{\mathcal{S}(z_i)^2}=\frac{1}{\mathcal{S}(z_i)}\frac{\prod_k\mathcal{S}(\omega(q_k)z_i)}{\mathcal{S}(z_i)},
    \end{equation}
    where the product over $k$ runs over all edges $q_k$ adjacent to $x_i$.

    We now observe that

    \begin{equation}
        [z_i^{2g}]\frac{1}{\mathcal{S}(z_i)}\frac{\prod_k\mathcal{S}(\omega(q_k)z_i)}{\mathcal{S}(z_i)}=\sum_{g^1+g^2=g}[z^{2g^1}]\frac{1}{\mathcal{S}(z_i)}[z^{2g^2}]\frac{\prod_k\mathcal{S}(\omega(q_k)z_i)}{\mathcal{S}(z_i)}.
    \end{equation}

    Thus, we have proved that $\frac{\prod_i \lambda_i!A_{\lambda_i}}{|(\mathrm{Aut}(\Gamma),\lambda)|}I_{\Gamma,\lambda}(q)$ is the generating function of contributions of all tropical covers with source curve $\Gamma$ respecting the order $\Omega$ and partition $\lambda$, while all edges are labelled. 

    By moving to $I_{\Gamma,\lambda}(q)$ we sum over all vertex orderings and drop the edge labels. Thus, we need to divide by the graph automorphisms of $\Gamma$ that however to respect $\lambda$. This is encoded on the left hand side in the automorphisms of the tropical covers.
    This proves the result.
    
\end{proof}

\printbibliography

\end{document}